\documentclass[11pt,reqno,a4paper]{amsart}      
\usepackage{amsmath,amssymb,amsthm,color,soul,enumerate,enumitem,cancel}
\usepackage{hyperref} % prolinkuje odkazy na clanky
\newtheorem{thm}{Theorem}
\newtheorem{thmx}{Theorem}

\newtheorem{prop}[thm]{Proposition}
\newtheorem{cor}[thm]{Corollary}
\newtheorem{claim}[thm]{Claim}
\newtheorem{lemma}[thm]{Lemma}
\newtheorem{fact}[thm]{Fact}
\newtheorem{question}[thm]{Question}
\newtheorem{problem}[thm]{Problem}
\newtheorem{conjecture}[thm]{Conjecture}
\theoremstyle{definition}
\newtheorem{defin}[thm]{Definition}
\theoremstyle{remark}
\newtheorem{remark}[thm]{Remark}

\newcommand{\en}{\mathbb N}
\newcommand{\setsep}{:\;}

\newcommand{\Rea}{\mathbb{R}}
\newcommand{\Nat}{\mathbb{N}}
\newcommand{\Rat}{\mathbb{Q}}
\newcommand{\Int}{\mathbb{Z}}
\newcommand{\Norm}{\|\cdot \|}
\newcommand{\dist}{\operatorname{dist}}

\newcommand{\dom}{\operatorname{dom}}

\newcommand{\id}{\operatorname{id}}
\newcommand{\Met}{\mathcal{M}}
\newcommand{\Banach}{\mathcal{B}}
\newcommand{\Corr}{\mathcal{R}}
\newcommand{\game}{\mathcal{G}}
\newcommand{\Span}{\operatorname{span}}

\newcommand{\Lip}{\operatorname{Lip}}

%%%%%PRO DIAGRAM%%%%%%%%%
\usepackage{pgf}
\usepackage{tikz}
\usetikzlibrary{arrows,automata}
\usetikzlibrary{positioning}
%%%%%

\begin{document}

\title[Complexity of distances]{Complexity of distances: Theory of generalized analytic equivalence relations}

\author[M. C\' uth]{Marek C\'uth}
\author[M. Doucha]{Michal Doucha}
\author[O. Kurka]{Ond\v{r}ej Kurka}
\email{cuth@karlin.mff.cuni.cz}
\email{doucha@math.cas.cz}
\email{kurka.ondrej@seznam.cz}

\address[M.~C\' uth, O.~Kurka]{Charles University, Faculty of Mathematics and Physics, Department of Mathematical Analysis, Sokolovsk\'a 83, 186 75 Prague 8, Czech Republic}
\address[M.~Doucha, O.~Kurka]{Institute of Mathematics of the Czech Academy of Sciences, \v{Z}itn\'a 25, 115 67 Prague 1, Czech Republic}

\subjclass[2010] {03E15, 54E50, 46B20}

\keywords{Analytic pseudometrics, analytic equivalence relations, orbit equivalence relations, Gromov-Hausdorff distance, descriptive set theory}

\thanks{M. C\' uth was supported by Charles University Research program No. UNCE/SCI/023 and by the Research grant GA\v CR 17-04197Y. M. Doucha was supported by the GA\v CR projects 16-34860L and EXPRO 20-31529X, and RVO: 67985840. O. Kurka was supported by the Research grant GA\v CR 17-04197Y and by RVO: 67985840.}

\begin{abstract}
We generalize the notion of analytic/Borel equivalence relations, orbit equivalence relations, and Borel reductions between them to their continuous and quantitative counterparts: analytic/Borel pseudometrics, orbit pseudometrics, and Borel reductions between them. We motivate these concepts on examples and we set some basic general theory. We illustrate the new notion of reduction by showing that the Gromov-Hausdorff distance maintains the same complexity if it is defined on the class of all Polish metric spaces, spaces bounded from below, from above, and from both below and above. Then we show that $E_1$ is not reducible to equivalences induced by orbit pseudometrics, generalizing the seminal result of Kechris and Louveau. We answer in negative a question of Ben-Yaacov, Doucha, Nies, and Tsankov on whether balls in the Gromov-Hausdorff and Kadets distances are Borel.  In appendix, we provide new methods using games showing that the distance-zero classes in certain pseudometrics are Borel, extending the results of Ben Yaacov, Doucha, Nies, and Tsankov.

There is a complementary paper of the authors where reductions between the most common pseudometrics from functional analysis and metric geometry are provided.
\end{abstract}
\maketitle

\section*{Introduction}
One of the main active streams of the current descriptive set theory, often called \emph{invariant descriptive set theory}, is concerned with the study of definable equivalence relations on standard Borel spaces and reductions between them. This is a subject that has turned out to be very helpful in many fields of mathematics. Indeed, a common theme in mathematics is classification of some sort, which is in other words studying the isomorphism equivalence relation in some category and finding some effective reduction from that equivalence relation to another (isomorphism) relation which is simpler and more understood. Invariant descriptive set theory provides a general framework for such investigations and can be viewed as a general classification theory. We refer to \cite{gao} for a reference to this subject.

The complexity of the isomorphism relation in several major mathematical categories has been recently determined. Just to name a few, in \cite{Sabok} the isomorphism relation of separable $C^*$-algebras was reduced to the univeral orbit equivalence relation. The same complexity was shown for the homeomorphism relation of compact metrizable spaces in \cite{Zie} (thus in fact for the isomorphism relation of separable unital commutative $C^*$-algebras) and also for the linear isometry relation of separable Banach spaces in \cite{Melleray}. The complexities of the linear isomorphism relation for separable Banach spaces, resp. of the completely bounded isomorphism relation for separable operator spaces, have been, on the other hand, shown to be complete analytic in \cite{FLR}, resp. in \cite{ACKKLS}.
\medskip

It turns out however that in many areas of mathematics, especially in those working with `metric objects', such as functional analysis or metric geometry, it is often convenient and more accessible to replace the isomorphism relation with some approximations. These approximations usually come in the form of some metric or pseudometric which measures how close to being isomorphic two objects are. Prototypical examples are the Gromov-Haudorff distance between compact metric spaces introduced by Gromov (\cite{Gro}) which measures how close two compact metric space are to being isometric, or the Banach-Mazur distance between finite-dimensional Banach spaces which measures how close two finite-dimensional Banach spaces are to being linearly isometric. In both these examples, when two spaces have distance zero they are isometric, resp. linearly isometric. However, in most of the more complicated examples this is not the case and the studied distance is in fact only a pseudometric. Consequently, it induces an equivalence relation that in general does not coincide with the standard isomorphism relation in the corresponding category and it is worth studying by its own. This happens e.g. when one considers the Gromov-Hausdorff, resp. the Banach-Mazur distances on general (complete) metric spaces, resp. general Banach spaces. We note that nowadays we have examples of such distances in many areas of mathematics, e.g. in metric space theory (see \cite{Gro} or \cite{BBI} for several other distances on metric spaces), in Banach space theory we mention the Gromov-Hausdorff distance analogue for Banach spaces, the Kadets distance (see \cite{Kad}), or various distances introduced e.g. by Ostrovskii (see \cite{O} and \cite{o94}), in operator algebras (see the Kadison-Kastler distance defined in \cite{KadKas}), in the theory of graph limits (see various distances defined on graph limits, e.g. graphons, in \cite{Lov}), in measure theory (see a number of distances between measures in \cite{Gibbs}).

Our goal in this paper is to view these pseudometrics as generalized equivalence relations. This follows the research from \cite{BYDNT} where certain back-and-forth Borel equivalence relations, coming from metric Scott analysis, approximating the isomorphism relation on a class of countable structures were replaced by pseudometrics measuring how close to being isomorphic two metric objects are. Moreover, it is also in the spirit of the model theory for metric structures, which has been enjoying a lot of developments and applications recently, to generalize discrete notions by their continuous counterparts. See \cite{metriclogic} for an introduction to that subject. In the case of equivalence relations, the natural continuous counterpart is the notion of a pseudometric. The innovation in our paper comes from the idea to generalize the standard notion of Borel reducibility between definable equivalence relations to a Borel reducibility between definable pseudometrics. This notion arises naturally when proving reductions between equivalence relations that are induced by pseudometrics and noticing that such reductions are often quantitative and provide more information.
\medskip

Suppose we are given two standard Borel spaces of metric structures, each equipped with some definable pseudometric, and we want to effectively reduce the first pseudometric to the other. The natural choice, generalizing the standard theory, is that the reduction be Borel. However, now it must continuously preserve the pseudometric in some sense. In any case, it should be a Borel reduction between the equivalence relations, in the standard theory, that are induced by these two pseudometrics. Some obvious choices could be that the reduction is isometric, or bi-Lipschitz, which seems to be too strong though. The right notion that most often appears naturally in our considerations is that the reduction is a uniformly continuous embedding (which is not in general injective when working with pseudometrics). This is moreover sufficient for our applications. We call such a Borel reduction \emph{Borel-uniformly continuous reduction}.

It can be certainly said that most of the most important analytic equivalence relations come from Polish group actions on standard Borel spaces. Such equivalence relations are called \emph{orbit equivalence relations}. Another main innovation of the paper is to suggest a continuous generalization of this class of equivalence relations. That is something we call \emph{orbit pseudometrics}. Under some natural restrictions, these seem to form a very interesting class of analytic pseudometrics. Indeed, many of the interesting and non-trivial analytic pseudometrics are bi-reducible with such orbit pseudometrics. The feature that makes them particularly interesting is that the equivalence relation $E_1$ is not Borel reducible to equivalence relations induced by these orbit pseudometrics, which we prove in this paper.
\medskip

The goal of this paper is first to define these new notions and make some first steps in establishing general theory around them. Second, we generalize the celebrated result of Kechris and Louveau about non-reducibility of the equivalence relation $E_1$ in the setting of orbit pseudometrics and using our methods we answer Question 8.4 from \cite{BYDNT} about Borelness of some of these pseudometrics. We provide also several reductions between the Gromov-Hausdorff distance defined on different classes of metric spaces. However, we have a complementary paper \cite{CDKpart2} where we concentrate reductions between the most common pseudometrics appearing in functional analysis and metric geometry.

We summarize our main results below. We show several reductions between the Gromov-Hausdorff distance defined on various classes of metric spaces.
\begin{thmx}\label{thm:introduction1}
The following restrictions of the Gromov-Hausdorff distance are mutually Borel-uniformly continuous bi-reducible: Gromov-Hausdorff distance defined on all Polish metric spaces, restricted to Polish metric spaces bounded from above, from below, from both above and below (see Theorem~\ref{thm:gh}).
\end{thmx}

As mentioned above, most of the interesting pseudometrics from functional analysis and metric geometry (including the Gromov-Hausdorff distance), and which are investigated in \cite{CDKpart2}, actually belong to a special class of analytic pseudometrics, which we call \emph{CTR orbit pseudometrics}. We refer the reader to Section \ref{section:orbitPseudometrics} for a precise definition. CTR orbit pseudometrics naturally generalize orbit equivalence relations and we are able to extend the well known result of Kechris and Louveau (from \cite{KeLo97}) on non-reducibility of the equivalence relation $E_1$ into them.
\begin{thmx}
\begin{enumerate}
\item Gromov-Hausdorff distance (and therefore all other distances bi-reducible with it, see \cite{CDKpart2}) is Borel-uniformly continuous bi-reducible with a CTR orbit pseudometric (see Theorem~\ref{thm:ghCTR}).
\item The equivalence relation $E_1$ is not Borel reducible to any equivalence relation given by a CTR orbit pseudometric (see Theorem~\ref{thm:E1notreducible}).
\end{enumerate}
\end{thmx}
 Next, we answer Question 8.4 from \cite{BYDNT} by proving the following. The Kadets distance will be defined in the next section.

\begin{thmx}
Let $\rho$ be any pseudometric to which the Kadets distance is Borel-uniformly continuous reducible (see \cite{CDKpart2} for a large list of examples). Then there is an element $A$ from the domain of $\rho$ such that the function $\rho(A,\cdot)$ is not Borel (see Theorem~\ref{thm:distancenotBorel}).
\end{thmx}

Finally, in an appendix, we extend the results from \cite{BYDNT} where it was shown that the equivalence classes of the Gromov-Hausdorff and Kadets distances are Borel.

%Note that the equivalence relations induced by the pseudometrics from Theorem \ref{thm:intro1} are analytic non-Borel since a universal orbit equivalence relation is Borel reducible to them. It is well known that orbit equivalence relations are in general not Borel (see e.g. \cite[Chapter 9]{gao}).

\bigskip

Let us note that each pseudometric $\rho$ on a set $X$ induces also a cruder equivalence relation, which we denote here by $E^\rho$, where for $x,y\in X$ we set $x E^\rho y$ if and only if $\rho(x,y)<\infty$. The complexity of such relations, for the natural pseudometrics from functional analysis and metric geometry, has been studied recently rather extensively. For example, for the Lipschitz and Banach-Mazur distances, the complexity of such equivalences was determined in \cite{FLR}, where it was shown that they are complete analytic equivalence relations. For the Gromov-Hausdorff distance, it was studied recently in \cite{AC}, where it was shown that this equivalence is not Borel reducible to an orbit equivalence relation. A plausible conjecture is that for all analytic pseudometrics $\rho$ bi-reducible with the Gromov-Hausdorff distance, the equivalence relation $E^\rho$ is complete analytic.

There are also certain analogies and differences between equivalence relations of the form $E_\rho$ and $E^\rho$, for some analytic pseudometric $\rho$, that we would like to point out here and which we plan to investigate later. Suppose we are given analytic pseudometrics $\rho_1,\rho_2$, and suppose we are focused on the equivalence relations $E_{\rho_1}$ and $E_{\rho_2}$ and Borel reducibility between them. The natural thing to do, which is thoroughly investigated in this paper and in \cite{CDKpart2}, is then to make the reduction quantitative in the sense that the reduction is actually a uniformly continuous embedding. When on the other hand working with the equivalence relations $E^{\rho_1}$ and $E^{\rho_2}$, a different situation emerges. Two elements of $E^{\rho_1}$ are equivalent if they merely have a finite, but arbitrarily large, distance in $\rho_1$. Thus quantitative reduction between $E^{\rho_1}$ and $E^{\rho_2}$ probably should ignore the small scale and it would be overprecise to require it to be uniformly continuous. Instead, it should, in the spirit of large scale geometry, be concerned only with the large distances, and therefore the appropriate notion seems to be of coarse reduction between $\rho_1$ and $\rho_2$. This will be investigated in future research.

Moreover, it would be an interesting project on its own to investigate which analytic equivalence relation from practice are naturally of the form $E_\rho$ or $E^\rho$ for some non-discrete non-trivial pseudometric.

\bigskip
The paper is organized as follows. In Section~\ref{section:pseudometrics} we recall several basic notions from descriptive set theory, introduce our new notions such as Borel reducibility between analytic pseudometrics and we provide several examples. In Section~\ref{section:gh}, we prove several reductions concerning the Gromov-Hausdorff distance that will be useful in the subsequent section and which illustrates the new notion of reducibility. In Section~\ref{section:orbitPseudometrics}, we introduce the continuous version of orbit equivalence relations, the CTR orbit pseudometrics, and we prove that the equivalence relation $E_1$ is not Borel reducible to the equivalence relations induced by them. In Section~\ref{section:notBoreldist}, we prove that $\rho$-balls are in general not Borel for any pseudometric $\rho$ to which the Kadets distance is reducible. Finally, in Section~\ref{section:problems}, we comment on our results and we present directions for further research. The paper is concluded with Appendix~\ref{sectionGames}  where we play certain metric games which provide an alternative, and probably more general, way to the methods from \cite{BYDNT} how to show that these pseudometrics have Borel classes of equivalence.  
\section{Analytic pseudometrics and reductions between them}\label{section:pseudometrics}
In this section, we introduce several new concepts that generalize the standard theory of Borel/analytic equivalence relations on Polish and standard Borel spaces and the reductions between them.

Recall that a Borel (analytic) equivalence relation $E$ on a Polish (or more generally standard Borel) space $X$ is a subset $E\subseteq X^2$ that is an equivalence relation and is a Borel (analytic) subset of the space $X^2$. If $E$ and $F$ are two equivalence relations, Borel or analytic, on spaces $X$, resp. $Y$, then we say that $E$ is Borel reducible to $F$, $E\leq_B F$ in symbols, if there exists a Borel function $f: X\rightarrow Y$ such that for every $x,y\in X$ we have $x E y$ if and only if $f(x) F f(y)$. Our reference for invariant descriptive set theory dealing with these notions is \cite{gao}.

Below we introduce the notions of Borel/analytic pseudometrics, generalizing the Borel/analytic equivalence relations, and the Borel reductions between them. We provide few general results about them. Reductions between the most important pseudodistances from metric geometry and functional analysis are the content of the complementary article \cite{CDKpart2}.

An important part of this section is also a list of such examples that demonstrates there is enough space for further investigations in this area. 

\subsection{Analytic pseudometrics and notions of a reduction}

\begin{defin}
Let $X$ be a standard Borel space. A pseudometric $\rho:X\times X\to [0,\infty]$ is called \emph{an analytic pseudometric}, resp. \emph{a Borel pseudometric}, if for every $r > 0$ the set $\{(x,y)\in X^2\setsep \rho(x,y)<r\}$ is analytic, resp. Borel.

Note that in the Borel case, this is equivalent to saying that $\rho$ is a Borel function. We emphasize that pseudometrics in our definition may attain $\infty$ as a value.
\end{defin}
The trivial examples of analytic (or Borel) pseudometrics come from analytic equivalence relations. Conversely, every analytic pseudometric induces an analytic equivalence relation.

\begin{defin}
Let $X$ be a set, $\rho$ a pseudometric on $X$ and $E$ an equivalence relation on $X$. By $E_\rho$ we denote the equivalence relation on $X$ defined by $E_\rho := \{(x,y)\in X\times X\setsep \rho(x,y)=0\}$. By $\rho_E$ we denote the pseudometric on $X$ with values in $\{0,1\}$ defined by $\rho_E(x,y) = 0$ iff $(x,y)\in E$.
\end{defin}

It is easy to check the following.

\begin{fact}
For every analytic (Borel) pseudometric $\rho$ on a standard Borel space $X$, the induced equivalence relation $E_\rho$ on $X$ is analytic (Borel). Conversely, for every analytic (Borel) equivalence relation $E$, the pseudometric $\rho_E$ is analytic (Borel).
\end{fact}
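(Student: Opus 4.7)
The plan is to verify both implications by direct unpacking of the definitions, using standard closure properties of the analytic and Borel pointclasses.

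For the forward direction, I would express the equivalence relation as a countable intersection of sublevel sets of $\rho$. Specifically, since $\rho(x,y) = 0$ iff $\rho(x,y) < 1/n$ for all $n \in \Nat$, we have
\[
E_\rho = \bigcap_{n \in \Nat} \{(x,y)\in X^2 \setsep \rho(x,y) < 1/n\}.
\]
If $\rho$ is analytic (resp.\ Borel), each set in the intersection is analytic (resp.\ Borel) by definition of analytic/Borel pseudometric, and the respective pointclass is closed under countable intersections, so $E_\rho$ is analytic (resp.\ Borel).

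For the converse, I would first check that $\rho_E$ is indeed a pseudometric: reflexivity and symmetry of $E$ give $\rho_E(x,x)=0$ and symmetry of $\rho_E$, and the triangle inequality follows from the transitivity of $E$ (if both $\rho_E(x,y)$ and $\rho_E(y,z)$ vanish then so does $\rho_E(x,z)$, and otherwise the right-hand side is at least $1 \geq \rho_E(x,z)$). Then the analyticity/Borelness of $\rho_E$ is immediate from the trivial computation of its sublevel sets:
\[
\{(x,y)\in X^2 \setsep \rho_E(x,y) < r\} = \begin{cases} E & \text{if } 0 < r \leq 1,\\ X^2 & \text{if } r > 1.\end{cases}
\]
In either case the set is analytic (resp.\ Borel) by hypothesis on $E$.

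There is no real obstacle here; the statement is essentially bookkeeping. The only two things worth being explicit about are (i) that the value $\infty$ allowed in the definition of pseudometrics causes no issue because the condition ``$\rho(x,y) < r$'' is only imposed for real $r > 0$, and (ii) the verification that $\rho_E$ satisfies the triangle inequality, which is the one place where the equivalence relation structure of $E$ (rather than mere reflexivity and symmetry) is used.
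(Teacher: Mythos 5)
Your proof is correct and is exactly the routine verification the paper omits (the statement appears there as a Fact prefaced only by ``It is easy to check the following''). The decomposition $E_\rho=\bigcap_n\{\rho<1/n\}$ and the observation that the sublevel sets of $\rho_E$ are just $E$ or $X^2$ are the intended argument, and your two explicit remarks (the role of $\infty$ and the use of transitivity for the triangle inequality) are the right points to flag.
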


Now we introduce the main new definition of the paper.

\begin{defin}\label{defin:Borel-UnifRedukce}
Let $X$, resp. $Y$ be standard Borel spaces and let $\rho_X$, resp. $\rho_Y$ be analytic pseudometrics on $X$, resp. on $Y$. We say that $\rho_X$ is \emph{Borel-uniformly continuous reducible} to $\rho_Y$, $\rho_X\leq_{B,u} \rho_Y$ in symbols, if there exists a Borel function $f: X\rightarrow Y$ such that, for every $\varepsilon>0$ there are $\delta_X>0$ and $\delta_Y>0$ satisfying
\[
\forall x,y\in X:\quad \rho_X(x,y)<\delta_X\Rightarrow \rho_Y(f(x),f(y))<\varepsilon
\]
and
\[
\forall x,y\in X:\quad \rho_Y(f(x),f(y))<\delta_Y\Rightarrow \rho_X(x,y)<\varepsilon.
\]
In this case we say that $f$ is a \emph{Borel-uniformly continuous reduction}. If $\rho_X\leq_{B,u} \rho_Y$ and $\rho_Y\leq_{B,u} \rho_X$, we say that $\rho_X$ is \emph{Borel-uniformly continuous bi-reducible} with $\rho_Y$ and write $\rho_X\sim_{B,u}\rho_Y$.

Moreover, if $f$ is injective we say it is an \emph{injective Borel-uniformly continuous reduction}.

If $f$ is an isometry from the pseudometric space $(X,\rho_X)$ into $(Y,\rho_Y)$, we say it is a \emph{Borel-isometric reduction}.

If there are $\varepsilon > 0$ and $C>0$ such that for every $x,y\in X$ we have
\[\begin{split}
	\rho_X(x,y) < \varepsilon & \implies \rho_Y(f(x),f(y))\leq C\rho_X(x,y)\\
    \text{and}\quad \rho_Y(f(x),f(y)) < \varepsilon & \implies \rho_X(x,y)\leq C\rho_Y(f(x),f(y)),
\end{split}\]
we say that $f$ is a \emph{Borel-Lipschitz on small distances reduction}.
\end{defin}

The definition of a Borel-uniformly continuous reduction seems to be the most useful one in the sense that it is strong enough for our applications, yet it naturally arises in our examples. Sometimes we are able to demonstrate the reducibility between some pseudometrics by maps with stronger properties and this is the reason why we mentioned the remaining notions above.

\begin{remark}
Note that in particular $\rho_X\leq_{B,u} \rho_Y$ implies the reducibility between the corresponding equivalence relations, i.e. $E_{\rho_X}\leq_B E_{\rho_Y}$ and the same Borel function $f$ is a witness. So Borel-uniformly continuous reducibility between pseudometrics is a stronger notion than the Borel reducibility between the corresponding equivalence relations.

Moreover, $ E_{X} \leq_B E_{Y} $ is the same as $ \rho_{E_{X}} \leq_{B,u} \rho_{E_{Y}} $. So Borel-uniformly continuous reducibility between pseudometrics is a generalization of the notion of Borel reducibility between equivalence relations.
\end{remark}

\begin{thm}\label{thm:universal_pseudometric}
There exists a universal analytic pseudometric. That is, for any analytic pseudometric there is a Borel-isometric reduction into the universal one.
\end{thm}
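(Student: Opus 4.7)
The plan is to combine the standard descriptive set-theoretic construction of a universal analytic subset of a Polish space with a ``shortest path'' formula that coerces an arbitrary analytic set into a genuine pseudometric.

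First I would fix a universal analytic set $U \subseteq \Nat^{\Nat} \times (\Nat^{\Nat} \times \Nat^{\Nat} \times \Rat)$, meaning that for every analytic $A \subseteq \Nat^{\Nat} \times \Nat^{\Nat} \times \Rat$ there is some $\alpha \in \Nat^{\Nat}$ with $A = U_\alpha := \{z \setsep (\alpha, z) \in U\}$. Set $Y := \Nat^{\Nat} \times \Nat^{\Nat}$, and for each $\alpha \in \Nat^{\Nat}$ define $d_\alpha : \Nat^{\Nat} \times \Nat^{\Nat} \to [0, \infty]$ by $d_\alpha(x,x) = 0$ and
\[
d_\alpha(x,y) = \inf\Bigl\{ \sum_{i=1}^n r_i \setsep x = x_0, \dots, x_n = y,\ r_i \in \Rat,\ r_i > 0,\ \forall i\ (\alpha, x_{i-1}, x_i, r_i) \in U \text{ or } (\alpha, x_i, x_{i-1}, r_i) \in U \Bigr\}.
\]
The shape of this definition makes $d_\alpha$ a symmetric pseudometric (possibly attaining $\infty$) regardless of what $U_\alpha$ looks like. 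Finally set
\[
\rho_Y((\alpha, x), (\beta, y)) := \begin{cases} d_\alpha(x, y), & \alpha = \beta, \\ \infty, & \alpha \neq \beta, \end{cases}
\]
which is again a pseudometric on the standard Borel space $Y$.

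Analyticity of $\rho_Y$ should be a routine quantifier count: for any $r > 0$, the set $\{\rho_Y < r\}$ is the intersection of the closed set $\{\alpha = \beta\}$ with the countable union over $n \in \Nat$ of the conditions ``there exist intermediate points $x_1, \dots, x_{n-1}$ and positive rationals summing to less than $r$ forming a valid chain'', and each such condition is obtained from $U$ by a Polish-space existential, hence is analytic. For universality, given an analytic pseudometric $\rho$ on a standard Borel $X$, I would fix a Borel injection $\iota : X \to \Nat^{\Nat}$ (with Borel image by Luzin-Suslin) and extend $\rho$ to an analytic pseudometric $\rho'$ on $\Nat^{\Nat}$ by declaring $\rho'(y, y) = 0$ and $\rho'(y_1, y_2) = \infty$ whenever $\{y_1, y_2\}$ is not contained in $\iota(X)$. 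The set $\{(y_1, y_2, r) \setsep \rho'(y_1, y_2) < r\}$ is then analytic, so by universality it equals $U_\alpha$ for some $\alpha$. For this particular $\alpha$, the one-step chain gives $d_\alpha \leq \rho'$; for any longer chain, the triangle inequality of $\rho'$ forces $\sum r_i > \sum \rho'(x_{i-1}, x_i) \geq \rho'(x, y)$, so $d_\alpha \geq \rho'$. Hence $d_\alpha = \rho'$, and the map $x \mapsto (\alpha, \iota(x))$ is the required Borel-isometric reduction.

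The main obstacle, as I see it, is the matching step: a generic parameter $\alpha$ yields a $U_\alpha$ with no reason to respect symmetry or the triangle inequality, which is precisely why the path-sum definition is forced on us. One must then arrange that, for a well-chosen $\alpha$, this path-sum infimum agrees with $\rho'$ exactly rather than merely bounding it from above. The shortest-path formula is designed so that the triangle inequality of $\rho'$ collapses all longer chains to the single-step value, yielding equality.
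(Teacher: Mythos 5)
Your proposal is correct and follows essentially the same route as the paper: a universal analytic set parameterizing the strict sublevel sets $\{\rho < r\}$, a chain-infimum formula to force symmetry and the triangle inequality for arbitrary parameters, the value $\infty$ across distinct parameter slices, and the observation that for the matching parameter the triangle inequality collapses every chain to the one-step value. The only cosmetic differences are that you symmetrize each chain step individually where the paper instead requires both $(a,x,y)$ and $(a,y,x)$ to admit chains, and that you spell out the reduction to $\Nat^{\Nat}$ via a Borel injection, which the paper dispatches with a ``without loss of generality.''
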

\begin{proof}
Let $\mathcal{U}\subseteq \Nat^\Nat\times ((\Nat^\Nat)^2)\times \Rat^+$ be a universal analytic subset for $(\Nat^\Nat)^2\times \Rat^+$ (see e.g. \cite[Theorem 14.2]{Ke} for the existence). That is, for every analytic subset $A\subseteq (\Nat^\Nat)^2\times \Rat^+$ there exists $u\in \Nat^\Nat$ such that $A=\mathcal{U}_u$. For every $p\in\Rat^+$ we set 
\[\begin{split}
U_p=\Big\{ & (a,x,y)\in (\Nat^\Nat)^3\setsep \exists q_1,\ldots,q_n\in\Rat^+\;(\sum_{i\leq n} q_i<p),\\
& \exists z_0,z_1,\ldots,z_n\in\Nat^\Nat, z_0=x, z_n = y, \;(\forall i\leq n\; (a,z_{i-1},z_{i},q_{i})\in\mathcal{U})\Big\}.
\end{split}\]

It is easy to check that $U_p$ is analytic. We define a pseudometric $\rho$ on $(\Nat^\Nat)^2$ as follows. For $(a,x),(a',y)\in(\Nat^\Nat)^2$ we set $\rho((a,x),(a,x))=0$ and $\rho((a,x),(a',y))=\infty$ if $a\neq a'$. Otherwise we set 
\[
\rho((a,x),(a,y))=\inf\big\{p\in\Rat^+\setsep (a,x,y)\in U_p \text{ and } (a,y,x)\in U_p\big\}.
\]
It is easy to check that $\rho$ is a pseudometric. It is also clear that for every $r>0$ the set $\{((z,x),(z',y))\in (\Nat^\Nat)^4\setsep \rho((z,x),(z',y))<r\}$ is analytic since this set is equal to
\[
\bigcup_{p<r, p\in\Rat^+} \big\{((z,x),(z',y))\in (\Nat^\Nat)^4\setsep z=z', (z,x,y)\in U_p, (z,y,x)\in U_p\big\}.
\]

Now let $\sigma$ be an arbitrary analytic pseudometric on some standard Borel space, which we may assume without loss of generality is equal to $\Nat^\Nat$. For each $p\in\Rat^+$ let $A_p$ be the analytic set $\{(x,y)\in (\Nat^\Nat)^2\setsep \sigma(x,y)<p\}$. The set $\mathcal{A}=\bigcup_{p\in\Rat^+} A_p\times\{p\}$ is analytic. Therefore there exists $u\in\Nat^\Nat$ such that $\mathcal{U}_u=\mathcal{A}$. Now, we easily verify that the mapping $(\Nat^\Nat,\sigma)\ni y\mapsto (u,y)\in ((\Nat^\Nat)^2,\rho)$ is an isometry and so it is the desired reduction.
\end{proof}

\subsection{Examples}
There are many natural examples of distances between various structures in mathematics that give rise to analytic pseudometrics on standard Borel spaces. Many of those are handled in a greater detail in our paper \cite{CDKpart2}.

First, let us formalize the class of all the separable metric and Banach spaces as it is done in \cite[Section 1.1]{CDKpart2}.

\begin{defin}
By $\Met$ we denote the space of all metrics on $\Nat$. This gives $\Met$ a Polish topology inherited from $\Rea^{\Nat\times\Nat}$.

If $p$ and $q$ are positive real numbers, by $\Met_p$, $\Met^q$ and $\Met_p^q$ respectively, we denote the space of metrics with values in $\{0\}\cup [p,\infty)$, $[0,q]$, and $\{0\}\cup[p,q]$ (assuming that $p<q$), respectively.
\end{defin}
\begin{remark}Every $f\in\Met$ is then a  code for Polish metric space $M_f$ which is the completion of $(\en,f)$.  Hence, in this sense we may refer to the set $\Met$ as to the standard Borel space of all infinite Polish metric spaces. This approach was used for the first time by Vershik \cite{ver} and further e.g. in \cite{cle}, see also \cite[page 324]{gao}.
\end{remark}

\begin{defin}\label{defin:spaceOfBanachSpaces}
Let us denote by $V$ the vector space over $\Rat$ of all finitely
supported sequences of rational numbers, that is, the unique infinite-dimensional vector space over $\Rat$ with a countable Hamel basis $(e_n)_{n\in\Nat}$. By $\Banach_0$ we denote the space of all norms on the vector space $V$. This gives $\Banach_0$ a Polish topology inherited from $\Rea^V$. We shall consider only those norms for which its canonical extension to the real vector space $c_{00}$ is still a norm; that is, norms for which the elements $(e_n)_n$ are not only $\Rat$-linearly independent, but also $\Rea$-linearly independent. Let us denote the subset of such norms by $\Banach$. It is a $G_\delta$ subset of $\Banach_0$, see \cite[p. 4]{CDKpart2}.
\end{defin}

\begin{remark} Each norm $\nu\in\Banach$ is then a code for an infinite-dimensional Banach space $X_\nu$ which is the completion of $(V,\nu)$. The completion is naturally a complete normed space over $\Rea$. This is the same as taking the canonical extension of $\nu$ to $c_{00}$ and then taking the completion.

Hence, we may refer to the set $\Banach$ as to the standard Borel space of all infinite-dimensional separable Banach spaces. This approach seems to be invented by the authors, we refer to \cite[Section 1.1]{CDKpart2} for a more detailed discussion and connection to other possible codings used by other authors.
\end{remark}

Now, let us present some natural examples of analytic pseudometrics.

\smallskip

\begin{enumerate}[leftmargin=0cm,itemindent=.5cm,start=1,label={\bfseries \arabic*. }]
	\item {\bf Gromov-Hausdorff distance}\label{ex:gh1} Let $(M,d_M)$ be a metric space and $A,B\subseteq M$ two non-empty subsets. The \emph{Hausdorff distance} between $A$ and $B$ in $M$, $\rho_H^M(A,B)$, is defined as
\[
\max\Big\{\sup_{a\in A} d_M(a,B),\sup_{b\in B} d_M(b,A)\Big\},
\]
where for an element $a\in M$ and a subset $B\subseteq M$, $d_M(a,B)=\inf_{b\in B} d_M(a,b)$. The \emph{Gromov-Hausdorff distance} between metric spaces $M$ and $N$, $\rho_{GH}(M,N)$, is defined as the infimum of the Hausdorff distances of their isometric copies contained in a single metric space, that is
\[
\rho_{GH}(M,N)=\inf_{\substack{\iota_M:M\hookrightarrow X\\ \iota_N: N\hookrightarrow X}} \rho_H^X(\iota_M(M),\iota_N(N)),
\]
where $\iota_M$ and $\iota_N$ are isometric embeddings into a metric space $X$. We refer to \cite[Section 7.3]{BBI} for some details concerning this distance.

Equip the Polish space $\Met$ with the Gromov-Hausdorff distance $\rho_{GH}$, that is, $\rho_{GH}(f,g)$ is the Gromov-Hausdorff distance between $(\en,f)$ and $(\en,g)$, which is easily seen to be equal to the Gromov-Hausdorff distance between their completions $M_f$ and $M_g$. We can also consider the pseudometric $\rho_{GH}$ on the space $\Banach$ of codes for separable Banach spaces, denoted there by $\rho_{GH}^\Banach$. Note that for Banach spaces $X$ and $Y$, $\rho_{GH}^\Banach(X,Y)$ is defined as the Gromov-Hausdorff distance of the unit balls $B_X$ and $B_Y$ (see e.g. the introduction in \cite{KalOst}). Both $\rho_{GH}$ and $\rho_{GH}^\Banach$ are analytic pseudometrics, see Section~\ref{section:gh} for more details.

\smallskip

	\item {\bf Kadets distance} 
	The \emph{Kadets distance} between Banach spaces $X$ and $Y$, $\rho_K(X,Y)$, is defined as the infimum of the Hausdorff distances of their unit balls over all isometric linear embeddings of $X$ and $Y$ into a common Banach space $Z$. That is \[
\rho_K(X,Y)=\inf_{\substack{\iota_X: X\hookrightarrow Z\\ \iota_Y: Y\hookrightarrow Z}} \rho_H^Z(\iota_X(B_X),\iota_Y(B_Y)),
\]
where $\iota_X$ and $\iota_Y$ are linear isometric embeddings into a Banach space $Z$. We refer the interested reader e.g. to \cite{KalOst} for some more details concerning this distance.

Equip the Polish space $\Banach$ with the Kadets distance $\rho_K$, that is, $\rho_K(\mu,\nu)$ is the Kadets distance between Banach spaces $X_\mu$ and $X_\nu$. It is true (even thought not trivial) that $\rho_K$ is analytic pseudometric, see \cite[Section 1.3]{CDKpart2} for more details.

\smallskip
    
    \item {\bf Lipschitz distance} The \emph{Lipschitz distance} between metric spaces is defined as
\[
\rho_L(M,N) = \inf\big\{\log\max\{\Lip(T),\Lip(T^{-1})\}\setsep T:M\rightarrow N\text{ is bi-Lipschitz bijection}\big\},
\]
where 
\[
\Lip(T)=\sup_{m\neq n\in M}\frac{d_N(T(m),T(n))}{d_M(m,n)}
\]
is the Lipschitz norm of $T$. This notion of the Lipschitz distance is from \cite[Definition 7.2.1]{BBI}, equivalent notions are considered e.g. in \cite[Definition 3.1]{Gro} in the setting of metric spaces and in \cite{DuKa} in the setting of Banach spaces. We refer reader to \cite[Section 1.2.3]{CDKpart2} for some more detailed comments.
    
    Equip the Polish spaces $\Met$ and $\Banach$ with the Lipschitz distance $\rho_L$, that is for $d,p\in\Met$ and $\mu,\nu\in\Banach$ by $\rho_L(d,p) = \rho_L(M_d,M_p)$ and $\rho_L(\mu,\nu) = \rho_L(X_\mu,X_\nu)$, respectively. Whenever we consider the pseudometric $\rho_L$ on $\Banach$ and we want to emphasize it, we write $\rho_L^\Banach$ instead of just $\rho_L$. Then both $\rho_L$ and $\rho_L^\Banach$ are analytic pseudometrics, see \cite[Section 1.3]{CDKpart2} for more details.

\smallskip

 \item {\bf Banach-Mazur distance} The (logarithmic) \emph{Banach-Mazur distance} between Banach spaces $X$ and $Y$ is defined as
\[
\rho_{BM}(X,Y) = \inf\big\{\log \|T\|\|T^{-1}\|\setsep T:X\rightarrow Y\text{ is a linear isomorphism}\big\}.
\]
 
 Equip the Polish space $\Banach$ by the Banach-Mazur distance $\rho_{BM}$, that is, $\rho_{BM}(\mu,\nu) = \rho_{BM}(X_\mu,X_\nu)$. We refer the interested reader to \cite[Sections 1.2.4 and 1.3]{CDKpart2} for more details including the proof that $\rho_{BM}$ is analytic.

\smallskip

\item {\bf Hausdorff-Lipschitz and net distances} The \emph{Hausdorff-Lipschitz distance} between metric spaces $M$ and $N$ is defined as 
\[
\rho_{HL}(M,N)=\inf \big\{\rho_{GH}(M,M')+\rho_L(M',N')+\rho_{GH}(N',N)\setsep M',N'\text{ metric spaces}\big\}.
\]
The Hausdorff-Lipschitz distance corresponds to the notion of \emph{quasi-isometry} or \emph{coarse Lipschitz equivalence}, because for metric spaces $M$ and $N$ we have $\rho_{HL}(M,N)<\infty$ if and only if the spaces $M$ and $N$ are quasi-isometric, or coarse Lipschitz equivalent (see e.g. \cite[Section 8.3]{BBI} for further information). Note also that the Hausdorff distance between Banach spaces equals the \emph{net} distance considered in \cite{DuKa}, see \cite[Proposition 27]{CDKpart2}.

Equip the Polish spaces $\Met$ and $\Banach$ with the Hausdorff-Lipschitz distance $\rho_{HL}$ between the corresponding metric/Banach spaces. Then $\rho_{HL}$ is analytic, see \cite[Section 1.3]{CDKpart2}.

\smallskip

\item {\bf Uniform distance} Let $X$ and $Y$ be Banach spaces. If $u:X\to Y$  is uniformly continuous, we put
\[
	\Lip_{\infty}u:= \inf_{\eta>0}\;\sup\left\{\frac{\|u(x)-u(y)\|}{\|x-y\|}\setsep \|x-y\|\geq \eta\right\}.
\]
The \emph{uniform distance} between $X$ and $Y$, see e.g. \cite{DuKa}, is defined as
\[
\rho_U(X,Y) = \inf \big\{\log ((\Lip_{\infty}u)(\Lip_{\infty}{u^{-1}}))\setsep u:X\to Y \text{ is uniform homeomorphism} \big\}.
\]

Equip the Polish space $\Banach$ with the uniform distance $\rho_{U}$ between corresponding Banach spaces. We refer the interested reader to \cite[Sections 1.2.6 and 1.3]{CDKpart2} for more details including the proof that $\rho_{U}$ is analytic.

\smallskip

\item {\bf Completely bounded Banach-Mazur distance} Recall that an operator space is a closed linear subspace of a C*-algebra. The natural type of a morphism between operator spaces is a \emph{completely bounded} isomorphism (cb-isomorphism). In \cite{Pi95}, Pisier introduced the \emph{Banach-Mazur cb-distance} between two operator spaces $E$ and $F$:
\[
\rho_{CB}(E,F)= \inf\big\{\log \|u\|_{cb}\|u^{-1}\|_{cb}\setsep u:E\rightarrow F\text{ is a cb-isomorphism}\big\},
\]
where $\|u\|_{cb}$ is the completely bounded norm of $u$. For a background on completely bounded maps and operator spaces the reader is referred to \cite{Pis}. The standard Borel space of operator spaces was considered and described in \cite[Section 2.3]{ACKKLS}. We leave to the reader to verify that $\rho_{CB}$ is an analytic distance.
\end{enumerate}
\bigskip

For more examples from e.g. Banach space theory we refer the reader to articles \cite{O} and \cite{o94} of Ostrovskii where various distances between subspaces of a given Banach space are considered. See for example the \emph{Kadets path distance} in \cite{O} or the \emph{operator opening distance} in \cite{o94}. A good source of examples is also the Encyclopedia of distances \cite{Ency}.

We shall also see more examples in Section~\ref{section:orbitPseudometrics}.

\section{Complexity of the Gromov-Hausdorff distance}\label{section:gh}

In this section we show that the Gromov-Hausdorff distance (more precisely, the pseudometric $\rho_{GH}$ defined on the Polish space $\Met$ from Example~\ref{ex:gh1} in Section~\ref{section:pseudometrics}) is analytic pseudometric and we prove the following reducibility result.

\begin{thm}\label{thm:gh}Let $0<p<q$.
The following pseudometrics are mutually Borel-uniformly continuous bi-reducible: $\rho_{GH}$, $\rho_{GH}\upharpoonright \Met_p$, $\rho_{GH}\upharpoonright \Met^q$,  $\rho_{GH}\upharpoonright \Met_p^q$.
\end{thm}

The proof of Theorem~\ref{thm:gh} follows immediately from Corollary~\ref{cor:ReductionGHMettoMet^q_p}. It will be further used in order to show that $E_1$ is not Borel reducible to $E_{\rho_{GH}}$, see Theorem~\ref{thm:ghE1}.

Most of the examples mentioned in Section~\ref{section:pseudometrics} give rise to an analytic pseudometric which is Borel-uniformly continuous bi-redicible with $\rho_{GH}$. This is the topic handled in the complementary paper \cite{CDKpart2}, the results contained therein and in Theorem~\ref{thm:gh} are summarized as follows.

\begin{thm}\label{thm:intro1}
\begin{enumerate}
\item\label{thm:intro1:(1)} The following pseudometrics are mutually Borel-uniformly continuous bi-reducible: the {\bf Gromov-Hausdorff distance} when restricted to Polish metric spaces, to metric spaces bounded from above, from below, from both above and below, to Banach spaces; the {\bf Banach-Mazur distance} on Banach spaces, the {\bf Lipschitz distance} on Polish metric spaces and Banach spaces; the {\bf Kadets distance} on Banach spaces; the {\bf Hausdorff-Lipschitz distance} on Polish meric spaces; the {\bf net distance}  on Banach spaces.
\item The pseudometrics above are Borel-uniformly continuous reducible to the {\bf uniform} distance on Banach spaces.
\end{enumerate}
\end{thm}

\subsection{Preliminary results concerning the Gromov-Hausdorff distance}

Let $A$ and $B$ be two sets. A \emph{correspondence} between $A$ and $B$ is a binary relation $\Corr\subseteq A\times B$ such that for every $a\in A$ there is $b\in B$ such that $a\Corr b$, and for every $b\in B$ there is $a\in A$ such that $a\Corr b$.

\begin{fact}[see e.g. Theorem 7.3.25. in \cite{BBI}]\label{fact:GHbyCorrespondences}
Let $M$ and $N$ be two metric spaces. For every $r>0$ we have $\rho_{GH}(M,N)< r$ if and only if there exists a correspondence $\Corr$ between $M$ and $N$ such that $\sup |d_M(m,m')-d_N(n,n')|< 2r$, where the supremum is taken over all $m,m'\in M$ and $n,n'\in N$ with $m\Corr n$ and $m'\Corr n'$.
\end{fact}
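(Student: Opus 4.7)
My plan is to prove the two directions separately, using the standard trick of building a metric on the disjoint union $M\sqcup N$ for the backward direction.

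\textbf{Forward direction.} Assume $\rho_{GH}(M,N)<r$. By definition, there exist a metric space $(X,d_X)$ and isometric embeddings $\iota_M\colon M\to X$, $\iota_N\colon N\to X$ with $\rho_H^X(\iota_M(M),\iota_N(N))=s<r$. Fix any $s'\in(s,r)$ and set
\[
\Corr = \bigl\{(m,n)\in M\times N \setsep d_X(\iota_M(m),\iota_N(n))\le s'\bigr\}.
\]
Since $s<s'$, the definition of Hausdorff distance guarantees that for every $m\in M$ some $n\in N$ satisfies $d_X(\iota_M(m),\iota_N(n))\le s$, so $(m,n)\in\Corr$; similarly from the other side, hence $\Corr$ is a correspondence. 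For any $(m,n),(m',n')\in\Corr$, the reverse triangle inequality in $X$ gives
\[
|d_M(m,m')-d_N(n,n')| \le d_X(\iota_M(m),\iota_N(n)) + d_X(\iota_M(m'),\iota_N(n')) \le 2s' < 2r,
\]
which bounds the supremum by $2s'<2r$.

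\textbf{Backward direction.} Assume there is a correspondence $\Corr$ with $\sup|d_M(m,m')-d_N(n,n')|=d<2r$. Pick $r'\in(d/2,r)$ and define a function $d_X$ on $(M\sqcup N)\times(M\sqcup N)$ by $d_X\restriction M=d_M$, $d_X\restriction N=d_N$, and for $m\in M$, $n\in N$,
\[
d_X(m,n) = d_X(n,m) = \inf\bigl\{d_M(m,m')+r'+d_N(n',n)\setsep (m',n')\in\Corr\bigr\}.
\]
The symmetry and non-negativity are immediate, and for $(m,n)\in\Corr$ one has $d_X(m,n)\le r'<r$ by plugging in $(m,n)$ itself. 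The main work is to verify the triangle inequality, which splits into cases according to how the three points are distributed between $M$ and $N$. The all-$M$ and all-$N$ cases are trivial. The cases with two points in the same piece follow from the triangle inequalities in $M$ and $N$ applied inside the infima. The crucial case is when the middle point lies in the opposite piece from the two outer points: for $m_1,m_3\in M$ and $n_2\in N$, we must show
\[
d_M(m_1,m_3) \le d_X(m_1,n_2)+d_X(n_2,m_3),
\]
and here the bound on distortion enters essentially. Given $(m',n'),(m'',n'')\in\Corr$ from the two infima,
\[
d_N(n',n_2)+d_N(n'',n_2)\ge d_N(n',n'')\ge d_M(m',m'')-2r',
\]
where the last inequality uses $d\le 2r'$, and combining with $d_M(m_1,m')+d_M(m',m'')+d_M(m'',m_3)\ge d_M(m_1,m_3)$ yields exactly what is needed. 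The symmetric case with two points in $N$ is analogous.

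\textbf{Conclusion.} With $d_X$ verified to be a pseudometric, one can pass to the quotient by the zero-distance relation (or, since all cross distances satisfy $d_X(m,n)\ge r'>0$ when $d>0$, no identification is needed except possibly when $d=0$), obtaining a metric space containing isometric copies of $M$ and $N$. Every point of $M$ (resp.\ $N$) lies within $d_X$-distance $r'$ of its partner under $\Corr$, so $\rho_H(M,N)\le r'<r$ in this space, yielding $\rho_{GH}(M,N)<r$.

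The main obstacle is the case of the triangle inequality where the middle point is separated from the outer two; this is the only place where the distortion hypothesis is actually used, and it is what forces the factor $2$ in the equivalence.
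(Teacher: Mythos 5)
Your proof is correct and is exactly the standard argument behind the cited result (the paper gives no proof of this Fact, deferring to \cite{BBI}, where Theorem 7.3.25 is proved by the same two steps: the quadrilateral inequality for the forward direction, and the gluing metric on $M\sqcup N$ with the $+r'$ cross term for the backward direction). The only cosmetic slip is in the forward direction, where the Hausdorff distance being $s$ only guarantees $\inf_n d_X(\iota_M(m),\iota_N(n))\le s$ rather than an $n$ with distance $\le s$; but since you only need distance $\le s'$ with $s'>s$ for membership in $\Corr$, the argument is unaffected.
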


It is easier to work with bijections instead of correspondences. One may wonder in which situations we may do so. Let us define the corresponding concept and prove some results in this direction. Those will be used later.

\begin{defin}
By $S_{\infty}$ we denote the set of all bijections from $\Nat$ to $\Nat$. For two metrics on natural numbers $ f, g \in \Met$ and $\varepsilon>0$, we consider the relation
\[
f \simeq_\varepsilon g \quad \Leftrightarrow \quad \exists \pi \in S_{\infty} \, \forall \{ n, m \} \in [\mathbb{N}]^{2} : |f(\pi(n), \pi(m)) - g(n, m)| \leq \varepsilon.
\]
We write $f\simeq g$ if $f\simeq_\varepsilon g$ for every $\varepsilon>0$.
\end{defin}

\begin{lemma}\label{lem:lehciImplikace}
For any two metrics on natural numbers $f, g\in \Met$ and any $\varepsilon>0$ we have $\rho_{GH}(f,g)\leq \varepsilon$ whenever $f\simeq_{2\varepsilon} g$.
\end{lemma}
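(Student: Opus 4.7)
The plan is to exhibit an explicit correspondence between $(\Nat, f)$ and $(\Nat, g)$ whose distortion is controlled by the hypothesis, and then invoke Fact~\ref{fact:GHbyCorrespondences}. Concretely, suppose $f \simeq_{2\varepsilon} g$, witnessed by a bijection $\pi \in S_\infty$. I would define
\[
\Corr \;=\; \{(\pi(n), n) \setsep n \in \Nat\} \;\subseteq\; \Nat \times \Nat.
\]
Since $\pi$ is a bijection, every element of the first copy of $\Nat$ appears as a first coordinate (write it as $\pi(\pi^{-1}(k))$) and every element of the second copy appears as a second coordinate, so $\Corr$ is indeed a correspondence.

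Next I would estimate its distortion. For any pairs $(\pi(n), n), (\pi(m), m) \in \Corr$, one has
\[
\bigl|f(\pi(n), \pi(m)) - g(n, m)\bigr| \leq 2\varepsilon
\]
by hypothesis when $n \neq m$, and trivially (both sides are $0$) when $n = m$. Thus the supremum in Fact~\ref{fact:GHbyCorrespondences} is at most $2\varepsilon$.

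Now for every $r > \varepsilon$ we have $2\varepsilon < 2r$, so Fact~\ref{fact:GHbyCorrespondences} applied to the correspondence $\Corr$ yields $\rho_{GH}((\Nat, f), (\Nat, g)) < r$. Letting $r \searrow \varepsilon$, we conclude $\rho_{GH}(f, g) \leq \varepsilon$, using that $\rho_{GH}(f,g)$ equals the Gromov-Hausdorff distance between $(\Nat, f)$ and $(\Nat, g)$ (equivalently, between the completions $M_f$ and $M_g$, which contain $(\Nat, f)$ and $(\Nat, g)$ as dense subsets).

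There is no real obstacle here; the only thing to be careful about is the strict versus non-strict inequality in the statement of Fact~\ref{fact:GHbyCorrespondences}, which is why we must pass through $r > \varepsilon$ and take the limit rather than trying to deduce $\rho_{GH}(f,g) < \varepsilon$ directly from a correspondence of distortion $\leq 2\varepsilon$.
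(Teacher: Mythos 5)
Your proof is correct and is essentially the paper's argument: the witnessing permutation induces a correspondence of distortion at most $2\varepsilon$, and Fact~\ref{fact:GHbyCorrespondences} does the rest. The paper states this in one line; your extra care with the strict-versus-non-strict inequality (passing through $r>\varepsilon$ and taking the limit) is exactly the detail the paper leaves implicit.
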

\begin{proof}
Suppose that $f\simeq_{2\varepsilon} g$. The permutation $\pi\in S_\infty$ witnessing that $f\simeq_{2\varepsilon} g$ induces a correspondence $\Corr$ between the metric spaces $(\Nat,f)$ and $(\Nat, g)$ which, by Fact \ref{fact:GHbyCorrespondences}, shows that $\rho_{GH}(f,g)\leq \varepsilon$.
\end{proof}

\begin{lemma}\label{lem:GHequivalenceUniformlyDiscrete}
Let $p>0$ be a real number. For any two metrics on natural numbers $f, g\in \Met_p$ we have $\rho_{GH}(f,g)=\inf\{r\setsep f\simeq_{2r} g\}$ provided that $\rho_{GH}(f,g)< p/2$.
\end{lemma}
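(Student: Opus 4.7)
The inequality $\rho_{GH}(f,g) \le \inf\{r : f \simeq_{2r} g\}$ is immediate from Lemma \ref{lem:lehciImplikace}, so the real content is the reverse inequality. The plan is: for any $s$ with $\rho_{GH}(f,g) < s < p/2$, produce a bijection $\pi \in S_\infty$ witnessing $f \simeq_{2s} g$. Since the hypothesis $\rho_{GH}(f,g) < p/2$ guarantees that such $s$ can be chosen arbitrarily close to $\rho_{GH}(f,g)$, this will give $\inf\{r : f \simeq_{2r} g\} \le \rho_{GH}(f,g)$.

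Given such an $s$, Fact \ref{fact:GHbyCorrespondences} supplies a correspondence $\Corr \subseteq \Nat \times \Nat$ such that $|f(m,m') - g(n,n')| < 2s$ whenever $m \Corr n$ and $m' \Corr n'$. The key observation is that the uniform discreteness assumption $f, g \in \Met_p$, combined with $2s < p$, forces $\Corr$ to be a bijection. Indeed, suppose $m \Corr n$ and $m \Corr n'$; applying the correspondence inequality with $m' = m$ gives $g(n,n') = |f(m,m) - g(n,n')| < 2s < p$, and since $g$ takes values in $\{0\} \cup [p,\infty)$ we must have $g(n,n') = 0$, i.e.\ $n = n'$. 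Symmetrically, $m \Corr n$ and $m' \Corr n$ force $m = m'$. Since a correspondence has full domain and full range by definition, $\Corr$ is the graph of a bijection $\pi : \Nat \to \Nat$ (declaring $\pi(n) = m$ when $m \Corr n$).

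This $\pi$ satisfies $|f(\pi(n), \pi(m)) - g(n,m)| < 2s \le 2s$ for all $n, m \in \Nat$, so $f \simeq_{2s} g$. Letting $s \searrow \rho_{GH}(f,g)$ through values below $p/2$ completes the proof.

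The only mildly subtle point is to ensure the $s$ used in Fact \ref{fact:GHbyCorrespondences} can be taken strictly less than $p/2$, which is precisely where the standing hypothesis $\rho_{GH}(f,g) < p/2$ is invoked; without it, the argument that $\Corr$ collapses to a bijection breaks down, since $2s$ could exceed $p$ and multiple $g$-distant points could be matched to the same $f$-point. No further obstacle is expected.
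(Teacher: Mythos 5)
Your proof is correct and follows essentially the same route as the paper: apply Fact \ref{fact:GHbyCorrespondences} to get a correspondence with distortion $<2s$ for $s<p/2$, observe that uniform $p$-discreteness forces the correspondence to be the graph of a bijection, and conclude $f\simeq_{2s}g$. The only cosmetic difference is that you make the limiting argument $s\searrow\rho_{GH}(f,g)$ explicit, which the paper leaves implicit.
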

\begin{proof}
By Lemma~\ref{lem:lehciImplikace}, $\rho_{GH}(f,g)\leq r$ whenever $f\simeq_{2r} g$. Conversely, suppose that $\rho_{GH}(f,g)< r$, where $r<p/2$. By Fact \ref{fact:GHbyCorrespondences}, there exists a correspondence $\Corr$ between the metric spaces $(\Nat,f)$ and $(\Nat, g)$ such that $\sup |f(m,m')-g(n,n')|<2r$, where the supremum is over all $m,m',n,n'\in\Nat$ with $m\Corr n$ and $m'\Corr n'$. We claim that $\Corr$ is the graph of some permutation $\pi\in S_\infty$. That will, by definition of $\simeq_{2r}$, show that $f\simeq_{2r} g$. Suppose that $\Corr$ is not such graph. Say e.g. that for some $m$ there are $n\neq n'$ such that $m\Corr n$ and $m\Corr n'$. Then we have $g(n,n')=|f(m,m)-g(n,n')|<2r<p$, which contradicts that $g\in \Met_p$. Analogously we can show that for no $m\neq m'$ there is $n$ such that $m\Corr n$ and $m'\Corr n$.
\end{proof}

Finally, let us show that $\rho_{GH}$ is analytic.

\begin{prop}
The pseudometrics $\rho_{GH}:\Met\times\Met\to [0,\infty]$ and $\rho_{GH}^\Banach:\Banach\times\Banach\to [0,\infty]$ are analytic.
\end{prop}
\begin{proof}
We provide the proof for $\rho_{GH}$, the proof for $\rho_{GH}^\Banach$ is analogous and therefore omitted.
Fix some $r>0$. We claim that the set $D_r=\{(d,p)\in\Met^2\setsep \rho_{GH}(d,p)<r\}$ is analytic. Note that by Fact \ref{fact:GHbyCorrespondences}, $(d,p)\in D_r$ if and only if there exist a correspondence $\Corr\subseteq \Nat\times\Nat$ and $k\in\Nat$ such that $\forall i,j,m,n\in\Nat\;(i\Corr j, m\Corr n\Rightarrow |d(i,m)-p(j,n)|\leq 2r-1/k)$. Moreover, it is easy to see that the set 
\[\begin{split}
E_r =\bigcup_{k\in\Nat}\Big\{&(\Corr,d,p)\in \mathcal{P}(\Nat\times\Nat)\times \Met^2\setsep \forall i,j,m,n\in\Nat\;\\
&\big(i\Corr j, m\Corr n\Rightarrow |d(i,m)-p(j,n)|\leq 2r-1/k\big)\Big\}
\end{split}\]
is Borel for every $r > 0$. Since one can view the space of all correspondences $\mathcal{C}$ as a $G_\delta$ subspace of $\mathcal{P}(\Nat\times\Nat)$ and $(p,d)\in D_r$ if and only if $\exists \Corr\in \mathcal{C}\; ((\Corr,p,d)\in E_r)$, we get that $D_r$ is an analytic subset of $\Met^2$.
\end{proof}

\subsection{Reductions} In this subsection we prove our reducibility results leading to the proof of Theorem~\ref{thm:gh}.

\begin{thm}\label{thm:ReductionGHMet_5toMet^3}
For every positive real numbers $p$, $q$, there is an injective Borel-uniformly continuous reduction from $\rho_{GH}$ on $\Met_p$ to $\rho_{GH}$ on $\Met^q$.

Moreover, the reduction is not only Borel-uniformly continuous, but also Borel-Lipschitz on small distances.
\end{thm}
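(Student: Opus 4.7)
I construct an explicit Borel injection $f: \Met_p \to \Met^q$ realizing a Borel-uniformly continuous reduction, actually Lipschitz on small distances. By composing with rescalings, which give Borel-isometric reductions between the spaces $\Met_r$ (and between the spaces $\Met^s$), I may normalize parameters. The naive truncation $\hat d(n,m) = \min(d(n,m), q)$ is Borel and realizes the forward direction easily (truncation is $1$-Lipschitz, so Fact~\ref{fact:GHbyCorrespondences} gives $\rho_{GH}(\hat d, \hat e) \leq \rho_{GH}(d, e)$), but it is not injective, and it fails in the backward direction since two metrics agreeing below $q$ but differing arbitrarily above give $\hat d = \hat e$. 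A concave pointwise encoding $\hat d = \phi \circ d$ is injective, but $\phi^{-1}$ is not uniformly continuous, and one can construct pairs $(d, e)$ with $\rho_{GH}(d, e)$ arbitrarily large yet $\rho_{GH}(\hat d, \hat e)$ arbitrarily small.

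\textbf{Multi-scale construction.} To retain large-scale information, I enlarge the index set via a Borel bijection $\Nat \leftrightarrow \Nat \times \Nat$ and define $\hat d$ on $\Nat \times \Nat$ by
\[
\hat d\bigl((n,k),(m,l)\bigr) = \begin{cases} \min(2^{-k} d(n,m),\, q/3) & \text{if } k = l, \\ q/2 & \text{if } k \neq l. \end{cases}
\]
Each scale $k$ carries a rescaled-and-truncated copy of $(\Nat, d)$; the gap $q/2-q/3=q/6$ between inter- and intra-scale values gives the triangle inequality (by direct case analysis) and ensures small-distortion correspondences must respect the scale decomposition. The map $d \mapsto \hat d$ is continuous and injective, since $d(n,m) = 2^k \hat d((n,k),(m,k))$ for any $k$ with $2^{-k} d(n,m) < q/3$. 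The forward direction then follows: at scale $k$ the map $t \mapsto \min(2^{-k} t, q/3)$ is $2^{-k}$-Lipschitz, so lifting a correspondence for $(d, e)$ via the identity on scales yields a correspondence for $(\hat d, \hat e)$ with sup distortion $\leq 2\rho_{GH}(d,e)$, attained at $k=0$.

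\textbf{Backward direction (main obstacle).} The chief technical step is to show that $\rho_{GH}(\hat d, \hat e) < \delta$ small implies $\rho_{GH}(d, e)$ small. For $\delta < q/12$, the gap between inter- and intra-scale values in $\hat d, \hat e$ forces any witnessing correspondence $\hat\Corr$ to decompose as $\bigsqcup_k \hat\Corr_k$ with $\hat\Corr_k \subseteq (\Nat\times\{k\}) \times (\Nat\times\{\sigma(k)\})$ for some permutation $\sigma$ of $\Nat$: indeed, if some pair at scale $k$ were matched to scale $k' \neq k$, then a pair $(m,k) \hat\Corr (m',l')$ with $l' \neq k'$ (or the symmetric situation) would produce distortion at least $q/2 - q/3 = q/6 > 2\delta$. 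Applying Lemma~\ref{lem:GHequivalenceUniformlyDiscrete} at each scale $k$ where the rescaled metric is still uniformly discrete enough (namely when $2^{-k} p$ dominates $\delta$) produces bijections that, compared across scales, force $\sigma = \mathrm{id}$ and coincide on a common bijection $\pi$ of $\Nat$. The intra-scale distortion at scale $0$ then gives the Lipschitz-on-small-distances bound for small $d$-distances, while higher scales handle larger distances through the factor $2^{-k}$. The main technical obstacle is carrying out this scale-patching carefully: verifying that $\sigma$ must be the identity and that the bijections at different scales combine into a single correspondence for $(d, e)$ with controlled distortion; a scale-dependent perturbation of the inter-scale value $q/2$ may be used to definitively break symmetry among scales if needed.
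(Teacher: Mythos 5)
There is a genuine gap: the backward direction of your multi-scale construction is false, and the failure has nothing to do with the scale-permutation issue you flag --- it occurs already when $\sigma$ is the identity, so the proposed ``perturbation of the inter-scale value'' cannot repair it. Concretely, fix any $\delta>0$, put $\eta=3\delta/q$ and $D=2/\eta$, let $d\in\Met_p$ be the two-cluster metric on $\Nat=A\sqcup B$ with all within-cluster distances equal to $p$ and all between-cluster distances equal to $D$, and let $e=(1+\eta)d\in\Met_p$. Since $|\min(a,c)-\min((1+\eta)a,c)|\leq\eta c$ for all $a\geq 0$, the scale-respecting identity correspondence $\{((n,k),(n,k))\}$ between $\hat d$ and $\hat e$ has distortion at most $\eta q/3=\delta$, so $\rho_{GH}(\hat d,\hat e)\leq\delta/2$ by Fact~\ref{fact:GHbyCorrespondences}; yet $\rho_{GH}(d,e)\geq|\diam e-\diam d|/2=\eta D/2=1$. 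Thus for $\varepsilon=1$ no $\delta_Y$ as in Definition~\ref{defin:Borel-UnifRedukce} exists, and $d\mapsto\hat d$ is not a Borel-uniformly continuous reduction.

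The root cause is structural: scale $k$ records the distance range $[0,2^{k}q/3]$ only with additive precision $\sim 2^{k}\delta$ (equivalently, with relative precision $\sim\delta/q$), and the correspondences at different scales are entirely uncoupled because all inter-scale distances are the constant $q/2$. Patching the scales can therefore certify at best multiplicative (Lipschitz/quasi-isometric) closeness of $d$ and $e$ --- your map would be closer to a reduction of $\rho_{HL}$ than of $\rho_{GH}$ --- never the uniform \emph{additive} closeness over all distance ranges that $\rho_{GH}$ demands on spaces of unbounded diameter. A working construction must encode a large distance $d(m_i,m_j)$ by a feature of the bounded space whose response to a distortion-$2\varepsilon$ correspondence is $O(\varepsilon)$ with a constant independent of $d(m_i,m_j)$. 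The paper achieves this by inserting, for each pair $m_i,m_j$, a discrete chain of roughly $d(m_i,m_j)$ auxiliary points joined by unit-length steps and taking the truncated graph metric: the distance is then recovered as the total length of the chain, and the key point (the Claim inside the paper's third step) is that a small-distortion correspondence must carry chains to chains step by step, with each unit step going to a step of length at most $1$ exactly, so the total additive error is $O(\varepsilon)$ uniformly in the length of the chain. This is also where the uniform lower bound $p$ on distances is genuinely used (to recognize chain points and their neighbors), which is why the paper first passes from $\Met_p$ to $\Met_5$. Your forward direction and the injectivity/Borelness of the assignment are fine, but they are the easy part.
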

\begin{proof}
First, note that for every positive real numbers $p$, $q$ there is an injective Borel-uniformly continuous (and even Borel-Lipschitz) reduction from $\rho_{GH}$ on $\Met_p$ to $\rho_{GH}$ on $\Met_5$ (the reduction is $\Met_p\ni d\mapsto \frac{5d}{p}\in\Met_5$) and from $\rho_{GH}$ on $\Met^3$ to $\rho_{GH}$ on $\Met^q$ (the reduction is $\Met^3\ni d\mapsto\frac{qd}{3}\in\Met^q$). Hence, it suffices to show that there is an injective Borel-uniformly continuous reduction from $\rho_{GH}$ on $\Met_5$ to $\rho_{GH}$ on $\Met^3$.

The strategy of the proof is the following. First, we will describe a construction which to each $(M,d_M)\in\Met_{5}$ assigns $(\tilde{M},d_{\tilde{M}})\in\Met^{3}$. We will show that for every $M,N\in\Met_{5}$ we have
\[\begin{split}
	\rho_{GH}(M,N) < 1 & \implies \rho_{GH}(\tilde{M},\tilde{N}) \leq \rho_{GH}(M,N),\\
    \rho_{GH}(\tilde{M},\tilde{N}) < \tfrac{1}{6} & \implies \rho_{GH}(M,N)\leq 5 \rho_{GH}(\tilde{M},\tilde{N}).
\end{split}\]
Finally, we will show that it is possible to make such an assignment in a Borel way, that is, find an injective Borel mapping $\Met_5\ni d\mapsto \tilde{d}\in \Met^3$ such that for every $M = (\Nat,d)$, $\tilde{M}$ is isometric to $(\Nat,\tilde{d})$.

\medskip
\noindent{\emph{First step: Construction of $\tilde{M}$}}\\
Consider $(M,d_M)\in \Met_{5}$ where $M=\{m_n:n\in\Nat\}$. For any two distinct $i,j\in\Nat$ we set $I^M_{i,j}=\{k\in\Int\setsep |k|<d_M(m_i,m_j)/2\}$. Note that by our assumption on the minimal distance in $M$ we have that $|I^M_{i,j}|\geq 5$. We set
\[
	\tilde M=M\cup\big\{p^M_{i,j,k}\setsep i<j\in\Nat,k\in I^M_{i,j}\big\}.
\]
 In the sequel, when the metric space in question is clear, we shall denote the points $p^M_{i,j,k}$ just by $p_{i,j,k}$, and $I^M_{i,j}$ just by $I_{i,j}$. We define a partial distance $d'$ as follows. Fix $i<j\in\Nat$. We set $d'(m_i,m_j)=d_M(m_i,m_j)$. For any $k\in I_{i,j}$ we set $d'(m_i,p_{i,j,k})=d_M(m_i,m_j)/2+k$, $d'(m_j,p_{i,j,k})=d_M(m_i,m_j)/2-k$. Finally, for $k, k'\in I_{i,j}$ we set $d'(p_{i,j,k},p_{i,j,k'})=|k'-k|$. The function $d'$ is then extended to the whole $\tilde M$ as the greatest extension of $d'$ (graph metric), which we denote as $\hat{d}_{\tilde M}$, and finally we take its minimum with the constant 3, that is for $x,y\in \tilde M$ we set
 \[
 	d_{\tilde M}(x,y) = \begin{cases}
 		\min\{d'(x,y),3\} & \text{if } (x,y)\in\operatorname{dom}(d'),\\
        \min\{d'(x,m_i) + d'(m_i,y),3\}& \text{if there are }i,j,j',k,k'\text{ with }\\
        &j\neq j', x = p_{\min \{ i,j \}, \max \{ i,j \}, k} \text{ and }\\
        &y = p_{\min \{ i,j' \}, \max \{ i,j' \}, k'}, \\
        3 & \text{otherwise}.
 	\end{cases}
 \]
 It is easy to verify that $(\tilde{M},d_{\tilde{M}})\in\Met^{3}$.

\medskip
 \noindent\emph{Second step: for any $\varepsilon \in(0,1)$ and any $M,N\in\Met_{5}$, we have
 \[
 \rho_{GH}(M,N) < \varepsilon \Rightarrow \rho_{GH}(\tilde{M},\tilde{N}) \leq \varepsilon.
 \]} 
Fix any $\varepsilon\in(0,1)$ and $M,N\in\Met_{5}$ with $\rho_{GH}(M,N)<\varepsilon$. By Lemma \ref{lem:GHequivalenceUniformlyDiscrete}, there exists a permutation $\pi:\Nat\rightarrow\Nat$ such that for any $i\neq j\in\Nat$, $|d_M(m_i,m_j)-d_N(n_{\pi(i)},n_{\pi(j)})|<2\varepsilon$. Note that, since $\varepsilon < 1$, we have either $I^M_{i,j} = I^N_{\pi(i),\pi(j)}$ or there exists $k\in\Nat$ with either $\{-k,k\} = I^M_{i,j}\setminus I^N_{\pi(i),\pi(j)}$ or $\{-k,k\} = I^N_{\pi(i),\pi(j)}\setminus I^M_{i,j}$. Define a correspondence $\Corr$ on $\tilde M\times \tilde N$ as a union of $\Corr_1$, $\Corr_2$ and $\Corr_3$, where
  \[\begin{split}
  \Corr_1:=& \big\{(m_i,n_{\pi(i)})\setsep i\in\Nat\big\},\\
  \Corr_2:=& \bigcup_{\{i<j\setsep \pi(i)<\pi(j)\}} \bigg( \big\{(p^M_{i,j,k},p^N_{\pi(i),\pi(j),k})\setsep k\in I^M_{i,j}\cap I^N_{\pi(i),\pi(j)}\big\}\cup\\    
  & \qquad \big\{(p^M_{i,j,-k},n_{\pi(i)}), (p^M_{i,j,k},n_{\pi(j)})\setsep k\in I^M_{i,j}\setminus I^N_{\pi(i),\pi(j)}, k>0\big\}\cup\\
  & \qquad \big\{(m_i,p^N_{\pi(i),\pi(j),-k}), (m_j,p^N_{\pi(i),\pi(j),k})\setsep k\in I^N_{\pi(i),\pi(j)}\setminus I^M_{i,j}, k > 0\big\}\bigg),\\  
  \end{split}\]
  and $\Corr_3$ is defined in a similar way as $\Corr_2$: we make the union over $\{i<j\setsep \pi(j)<\pi(i)\}$ and replace $(p^M_{i,j,k},p^N_{\pi(i),\pi(j),k})$, $(m_i,p^N_{\pi(i),\pi(j),-k})$, $(m_j,p^N_{\pi(i),\pi(j),k})$ by $(p^M_{i,j,k},p^N_{\pi(j),\pi(i),-k})$, $(m_i,p^N_{\pi(j),\pi(i),k})$, $(m_j,p^N_{\pi(j),\pi(i),-k})$, respectively.  
  
  It is straightforward to check that the correspondence $\Corr$ witnesses that $\rho_{GH}(\tilde M,\tilde N)\leq \varepsilon$.
  
  \medskip
\noindent\emph{Third step: for any $\varepsilon \in(0,1/6]$ and any $M,N\in\Met_{5}$, we have}
 \[
 \rho_{GH}(\tilde{M},\tilde{N}) < \varepsilon \Rightarrow \rho_{GH}(M,N) \leq 5\varepsilon.
 \] 
Let $\varepsilon\leq 1/6$. Suppose that for some $M,N\in\Met_5$ we have $\rho_{GH}(\tilde M,\tilde N)<\varepsilon$. By Fact \ref{fact:GHbyCorrespondences} there is a correspondence $\Corr\subseteq \tilde M\times \tilde N$ such that for every $x,x'\in \tilde M$ and $y,y'\in\tilde N$ such that $(x,y)\in \Corr$ and $(x',y')\in\Corr$ we have $|d_{\tilde M}(x,x')-d_{\tilde N}(y,y')|<2\varepsilon$. Let us consider the relations $\pi$ and $\tau$ on $\Nat$ given by
\[\begin{split}
	\pi:= & \big\{(i,j)\in\Nat\times\Nat\setsep \exists y\in\tilde{N}:\; (m_i,y)\in\Corr\;\&\; d_{\tilde N} (y,n_j)<3\varepsilon\big\},\\
    \tau:= & \big\{(j,i)\in\Nat\times\Nat\setsep \exists x\in\tilde{M}:\; (x,n_j)\in\Corr\;\&\; d_{\tilde M} (m_i,x)<3\varepsilon\big\}.
  \end{split}
\]
We shall prove that $\pi$ is a bijection with inverse $\tau$ and that this bijection, due to Lemma~\ref{lem:lehciImplikace}, witnesses $\rho_{GH}(M,N) \leq 5\varepsilon$.

First, we shall prove that $\dom(\pi) = \Nat$. Fix $i\in\Nat$. If there is $n_j\in N$ with $(m_i,n_j)\in\Corr$, we have $(i,j)\in\pi$. Otherwise, pick $p^N_{h,j,k}$ with $(m_i,p^N_{h,j,k})\in\Corr$. First, assume we have $k\leq 0$. For $l > i$ pick $a_l = p^M_{i,l,k(l)}$ with $d_{\tilde M} (m_i,a_l)\in[2\varepsilon,1+2\varepsilon)$ and find $b_l\in\tilde{N}$ with $(a_l,b_l)\in\Corr$. For $l,l'>i$, $l\neq l'$, we have $d_{\tilde M}(a_l,a_{l'})\geq 4\varepsilon$; hence, $d_{\tilde N}(b_l,b_{l'})\geq 2\varepsilon > 0$ and $b_l\neq b_{l'}$. Moreover, $d_{\tilde M}(m_i,a_l)\in[2\varepsilon,1+2\varepsilon)$ implies that $d_{\tilde N}(p^N_{h,j,k},b_l)\in (0,1+4\varepsilon)$, and so by the definition of $d_{\tilde N}$, using the fact that obviously $d_{\tilde N}(n_j,p^N_{h,j,k})\geq \frac{5}{2} \geq 1 + 4\varepsilon$, we have that 
\[\begin{split}
	b_l\in\big\{n_h\} & \cup \big\{p^N_{h,j,k'}\setsep |k'-k|=1\big\}\cup \big\{p^N_{j',h,k'}\setsep j' < h, k'\in I_{j',h}\big\} \cup\\
    & \cup \big\{p^N_{h,j',k'}\setsep h<j',k'\in I_{h,j'}\big\}.
\end{split}\]

Since the first three sets are finite and we have infinitely many $b_l$'s, there are $l,l'>i$ with $b_l = p_{h,j',k'}$ and $b_{l'} = p_{h,j^{''},k^{''}}$ for some $h<j'\neq j^{''}$ and some $k', k^{''}$. Then
\[\begin{split}
	2d_{\tilde N}(p^N_{h,j,k},n_h) & = d_{\tilde N}(p^N_{h,j,k},b_l) + d_{\tilde N}(p^N_{h,j,k},b_{l'}) - d_{\tilde N}(b_l,b_{l'}) < \\
    	& < d_{\tilde M}(m_i,a_l) + d_{\tilde M}(m_i,a_{l'}) - d_{\tilde M}(a_l,a_{l'}) + 6\varepsilon = 6\varepsilon.
\end{split}\]
Therefore, we have $d_{\tilde N}(p^N_{h,j,k},n_h) < 3\varepsilon$ and $ (i,h) \in \pi $. Similarly, for $k > 0$ we get $(i,j) \in \pi$. Therefore, $i\in \dom(\pi)$ and since $i\in\Nat$ was arbitrary, $\dom(\pi) = \Nat$.

Analogously, $\dom(\tau) = \Nat$. Fix some $(i,j)\in\pi$ and $(j,k)\in \tau$. There are $y\in\tilde{N}$ and $x\in\tilde{M}$ with $\{(m_i,y),(x,n_j)\}\subseteq\Corr$ and $\max\{d_{\tilde N}(y,n_j),d_{\tilde M}(m_k,x)\}<3\varepsilon$. Hence,
\[
	d_{\tilde M} (m_i,m_k)\leq d_{\tilde M}(m_i,x) + d_{\tilde M}(x,m_k) < (d_{\tilde N}(y,n_j) + 2\varepsilon) + 3\varepsilon < 8\varepsilon
\]
which implies $d_{\tilde M}(m_i,m_k) < 3$ and $i=k$. Therefore, $\tau\circ\pi = \id$. Analogously, $\pi\circ\tau = \id$. Therefore, $\pi$ is a bijection with $\pi^{-1} = \tau$.

Let us recall that $\hat{d}_{\tilde N}$ is our notation for the greatest extension of $d'_{\tilde N}$, that is, $\hat{d}_{\tilde N}\supset d_N$ is a metric with $d_{\tilde N} = \min\{\hat{d}_{\tilde N},3\}$. In order to see that $\pi$ witnesses $d_M\simeq_{10\varepsilon} d_N$, we shall use the following claim. 

\begin{claim}
Let us have $(m_i,y)\in\Corr$ and $(m_{i'},y')\in\Corr$ for some $i\neq i'$ and $y,y'\in \tilde N$. Then
\[
	\hat{d}_{\tilde N}(y,y')\leq d_M(m_i,m_{i'}) + 4\varepsilon.
\]
\end{claim}
\begin{proof}We may suppose that $i < i'$. Pick integers $u,v$ with $d_{\tilde M}(m_i,p_{i,i',u})\in [\tfrac{3}{2},\tfrac{5}{2})$ and $d_{\tilde M}(m_{i'},p_{i,i',v})\in [\tfrac{3}{2},\tfrac{5}{2})$. Moreover, for every $ u \leq k \leq v $, pick some $ y_{k} \in \tilde{N} $ such that $ (p_{i,i',k}, y_{k}) \in \Corr $. We check first that $d_{\tilde N}(y_k,y_{k+1})\leq 1$ for $u\leq k\leq v-1$.

In order to get a contradiction, let us assume that $ d_{\tilde N}(y_k,y_{k+1}) > 1 $ for some $ u \leq k \leq v-1 $. Note that $ \hat{d}_{\tilde N}(y_k,y_{k+1}) \in (1, 1+2\varepsilon) $, since $ d_{\tilde N}(y_k,y_{k+1}) < d_{\tilde M}(p_{i,i',k}, p_{i,i',k+1}) + 2\varepsilon = 1+2\varepsilon $, and thus $ \hat{d}_{\tilde N}(y_k,y_{k+1}) = d_{\tilde N}(y_k,y_{k+1}) $ in particular. For this reason, there is $ j $ such that $ \hat{d}_{\tilde N}(y_k,n_{j}) + \hat{d}_{\tilde N}(n_{j},y_{k+1}) = \hat{d}_{\tilde N}(y_k,y_{k+1}) $. There is $ l \in \{ k, k+1 \} $ such that $ \hat{d}_{\tilde N}(y_l,n_{j}) < \frac{1}{2}(1+2\varepsilon) = \frac{1}{2} + \varepsilon $. Consequently, $ d_{\tilde M}(p_{i,i',l}, m_{\tau(j)}) < \frac{1}{2} + \varepsilon + 2\varepsilon + 3\varepsilon \leq \frac{3}{2} $. On the other hand, since $ u \leq l \leq v $, we have $ d_{\tilde M}(p_{i,i',l}, m_{p}) \geq \frac{3}{2} $ for any $ p $. This completes the verification of $d_{\tilde N}(y_k,y_{k+1})\leq 1$.

Finally, using $ d_{\tilde N}(y,y_{u}) < d_{\tilde M}(m_i,p_{i,i',u}) + 2\varepsilon < \frac{5}{2} + 2\varepsilon < 3 $ and $ d_{\tilde N}(y_{v},y') < d_{\tilde M}(p_{i,i',v},m_{i'}) + 2\varepsilon < \frac{5}{2} + 2\varepsilon < 3 $, we get
\[\begin{split}
	\hat{d}_{\tilde N}(y,y') & \leq \hat{d}_{\tilde N}(y,y_u) + \sum_{k=u}^{v-1} \hat{d}_{\tilde N}(y_k,y_{k+1}) + \hat{d}_{\tilde N}(y_v,y')\\
    & = d_{\tilde N}(y,y_u) + \sum_{k=u}^{v-1} d_{\tilde N}(y_k,y_{k+1}) + d_{\tilde N}(y_v,y')\\
    & \leq d_{\tilde M}(m_i,p_{i,i',u}) + 2\varepsilon + \sum_{k=u}^{v-1} 1 + d_{\tilde M}(p_{i,i',v},m_{i'}) + 2\varepsilon\\
    & = \hat{d}_{\tilde M}(m_i,p_{i,i',u}) + \sum_{k=u}^{v-1} \hat{d}_{\tilde M}(p_{i,i',k},p_{i,i',k+1}) + \hat{d}_{\tilde M}(p_{i,i',v},m_{i'}) + 4\varepsilon\\
    & = d_M(m_i,m_{i'}) + 4\varepsilon,
\end{split}\]
which provides the desired inequality.
\end{proof}

Now, for every $i,i'\in\Nat$, $i\neq i'$, consider $y$ and $y'$ from $\tilde{N}$ witnessing that $(i,\pi(i))\in\pi$ and $(i',\pi(i'))\in\pi$. We have
\[\begin{split}
	d_N(n_{\pi(i)},n_{\pi(i')}) & = \hat{d}_{\tilde N}(n_{\pi(i)},n_{\pi(i')}) \leq \hat{d}_{\tilde N}(n_{\pi(i)},y) + \hat{d}_{\tilde N}(y,y') + \hat{d}_{\tilde N}(y',n_{\pi(i')})\\ 
    & \leq  3\varepsilon + d_M(m_i,m_{i'}) + 4\varepsilon + 3\varepsilon.
    \end{split}
\]
Analogously, we get $d_M(m_i,m_{i'})\leq d_N(n_{\pi(i)},n_{\pi(i')}) + 10\varepsilon$; hence, $\pi$ witnesses $d_M\simeq_{10\varepsilon} d_N$ and by Lemma~\ref{lem:lehciImplikace}, $\rho_{GH}(M,N) \leq 5\varepsilon$.

\medskip
\noindent\emph{Fourth step: there is an injective Borel mapping $\Met_5\ni d\mapsto \tilde{d}\in \Met^3$ such that for every $M = (\Nat,d)$, $\tilde{M}$ is isometric to $(\Nat,\tilde{d})$.}\\
Split $\Nat$ into two disjoint infinite subsets $A$ and $B$ enumerated as $A=\{a_n\setsep n\in \Nat\}$ and $B=\{b_n\setsep n\in\Nat\}$. Moreover, let $\{(c_n,d_n)\setsep n\in\Nat\}$ be the enumeration of the set $\{(n,m)\in \Nat^2\setsep n<m\}$ given by
\[\begin{split}
\underbrace{(1,2)}_{(c_1,d_1)},\quad \underbrace{(1,3)}_{(c_2,d_2)}, \underbrace{(2,3)}_{(c_3,d_3)},\quad \ldots,\quad (1,n), (2,n), \ldots, (n-1,n),\quad \ldots
\end{split}\]
Take arbitrary $M = (\Nat,d)\in\Met_5$ enumerated as above as $\{m_i\setsep i\in \Nat\}$, where $m_i=i$ for every $i\in\Nat$. For further use we put $I^d_{i,j}:=I^M_{i,j}$ and $p^d_{i,j,k}:=p^M_{i,j,k}$ for $i<j$ and $k\in I^M_{i,j}$. Let us inductively construct bijection $\pi_d:\Nat\to \tilde{M}$. We put $\pi_d(a_n) := m_n$ for every $n\in\en$. Now consider $(c_1,d_1)$. In $\tilde M$ there are finitely many, in fact $|I_{c_1,d_1}^d|$-many, points $p_{c_1,d_1,k}$, $k\in I_{c_1,d_1}$. We enumerate them in an increasing order as $\pi_d(b_1),\ldots,\pi_d(b_{N_1})$, where $N_1=|I_{c_1,d_1}|$. Then we enumerate the points $p_{c_2,d_2,k}$, $k\in I_{c_2,d_2}$, as $\pi_d(b_{N_1+1}),\ldots,\pi_d(b_{N_1+N_2})$, where $N_2=|I_{c_2,d_2}|$, and so on. Finally, we define $\tilde{d}\in\Met^3$ as $\tilde{d}(n,k):=d_{\tilde{M}}(\pi_d(n),\pi_d(k))$, $(n,k)\in\Nat\times\Nat$. 

Now, it is not difficult to see that the map $\Met_5\ni d\mapsto \tilde{d}\in \Met^3$ is Borel and injective.
\end{proof}

\begin{thm}\label{thm:ReductionGHMettoMet_1}
There is an injective Borel-Lipschitz on small distances reduction from $\rho_{GH}$ on $\Met$ to $\rho_{GH}$ on $\Met_p$, where $p>0$ is arbitrary. Moreover, the reduction $F:\Met\to \Met_p$ is not only Borel but even continuous and for every $q>0$ and $d\in\Met^q$ we have $F(d)\in \Met^{q+p}_p$. In particular, for $q>0$ we have
\[
(\rho_{GH}\upharpoonright \Met)\; \leq_{B,u}\; (\rho_{GH}\upharpoonright \Met_1)\quad\text{ and }\quad(\rho_{GH}\upharpoonright \Met^q)\; \leq_{B,u}\; (\rho_{GH}\upharpoonright \Met^{q+p}_p).
\]
\end{thm}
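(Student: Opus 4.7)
The plan is to add a constant $p$ to every nonzero distance after first passing to an index set in which each ``original point'' has infinitely many copies, so that back-and-forth arguments become routine. Concretely, I fix once and for all a bijection $\psi = (\psi_1, \psi_2) \colon \Nat \to \Nat \times \Nat$ and define
\[
F(d)(k, l) = \begin{cases} 0, & k = l, \\ d(\psi_1(k), \psi_1(l)) + p, & k \neq l. \end{cases}
\]
The triangle inequality for $F(d)$ follows from that of $d$ (adding $p$ to each positive side preserves it), the minimal positive distance is $p$, and $\diam F(d) \leq \diam(d) + p$; thus $F$ sends $\Met$ into $\Met_p$ and $\Met^q$ into $\Met_p^{q+p}$. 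Continuity of $F$ and injectivity (distinct $d, d'$ differ at some $(i, j)$ with $i \neq j$, which is witnessed by any $(k, l)$ with $\psi_1(k) = i$, $\psi_1(l) = j$) are immediate.

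For the forward bound I take $\rho_{GH}(d, d') < \varepsilon$, use Fact~\ref{fact:GHbyCorrespondences} to fix a correspondence $\Corr \subseteq \Nat \times \Nat$ with $|d(i, i') - d'(j, j')| < 2\varepsilon$ whenever $i \Corr j$ and $i' \Corr j'$, and build a bijection $\beta \colon \Nat \to \Nat$ satisfying $\psi_1(k) \Corr \psi_1(\beta(k))$ for every $k$ by a standard back-and-forth. The construction is routine because every $\psi_1$-fiber is infinite: given any unmatched $k$, one picks $j$ with $\psi_1(k) \Corr j$ and any unused $l \in \psi_1^{-1}(j)$ (available since the fiber is infinite and only finitely many $l$'s are used so far), and sets $\beta(k) = l$; dually for unmatched targets. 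Then for $k \neq l$ in $\Nat$, setting $i = \psi_1(k)$, $i' = \psi_1(l)$, $j = \psi_1(\beta(k))$, $j' = \psi_1(\beta(l))$, one computes $|F(d)(k, l) - F(d')(\beta(k), \beta(l))| = |d(i, i') - d'(j, j')| < 2\varepsilon$, where collisions like $i = i'$ or $j = j'$ are harmless since the corresponding distance then vanishes. Lemma~\ref{lem:lehciImplikace} gives $\rho_{GH}(F(d), F(d')) \leq \varepsilon$.

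For the backward bound I assume $r := \rho_{GH}(F(d), F(d')) < p/2$, invoke Lemma~\ref{lem:GHequivalenceUniformlyDiscrete} (applicable since $F(d), F(d') \in \Met_p$) to obtain, for every $r' \in (r, p/2)$, a permutation $\pi \in S_\infty$ with $|F(d)(k, l) - F(d')(\pi(k), \pi(l))| \leq 2r'$ for all $k, l$, and project through $\psi_1$ by declaring $i \Corr' j$ iff $\psi_1(k) = i$ and $\psi_1(\pi(k)) = j$ for some $k$. The infinitude of $\psi_1$-fibers together with the bijectivity of $\pi$ make $\Corr'$ a correspondence on $\Nat$, and a direct case check on witnesses $k, k'$ of $i \Corr' j$ and $i' \Corr' j'$ (distinguishing $k = k'$ from $k \neq k'$ and using $\pi(k) \neq \pi(k')$ in the latter) yields $|d(i, i') - d'(j, j')| \leq 2r'$. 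Hence $\rho_{GH}(d, d') \leq r'$, and letting $r' \to r$ gives $\rho_{GH}(d, d') \leq r$.

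Combining the two bounds with constant $C = 1$ and threshold $\varepsilon_0 = p/2$, $F$ is Borel-Lipschitz on small distances and in particular Borel-uniformly continuous; the two \emph{in particular} consequences then follow by restricting $F$. The only step requiring genuine thought is the back-and-forth construction of $\beta$, and the choice to index $F(d)$ through $\psi \colon \Nat \to \Nat \times \Nat$ is precisely what trivializes it by providing infinite supply within each $\psi_1$-fiber.
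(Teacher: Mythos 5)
Your proof is correct and follows essentially the same route as the paper: the paper also adds the constant $p$ to all nonzero distances after passing to $\Nat\times\Nat$ (equivalently, your $\psi_1$-fibers), proves the forward bound by lifting a correspondence to a permutation via the infinite fibers, and proves the backward bound by projecting a permutation from Lemma~\ref{lem:GHequivalenceUniformlyDiscrete} back to a correspondence. The only difference is presentational — you build the bijection with $\Nat$ into the definition from the start and spell out the back-and-forth that the paper calls ``straightforward''.
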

\begin{proof}
Fix $p>0$. To each metric $d\in\Met$ on $\Nat$ we associate a metric $\tilde d$ on $\Nat^2$. For any two distinct points $(m,n),(m',n')\in\Nat^2$ we set
\[
\tilde d((m,n),(m',n'))=d(m,m')+p.
\]
We \emph{claim} that $\rho_{GH}(d,e) \geq \rho_{GH}(\tilde{d},\tilde{e})$, and $\rho_{GH}(d,e) \leq \rho_{GH}(\tilde{d},\tilde{e})$ whenever $\rho_{GH}(\tilde{d},\tilde{e}) < p/2$.

Indeed, fix $\varepsilon>0$ and $d,e\in\Met$. Suppose that $\rho_{GH}(d,e)<\varepsilon$. By Fact \ref{fact:GHbyCorrespondences}, there is a correspondence $\Corr\subseteq \Nat^2$ such that for $m,m',n,n'\in\Nat$ we have $|d(m,m')-e(n,n')|<2\varepsilon$ whenever $m\Corr n$ and $m'\Corr n'$. It is straightforward to see that there exists a permutation $\pi$ of $\en^2$ such that, for every $(m,n),(m',n')\in\Nat^2$, we have $m\Corr m'$ whenever $\pi(m,n)=(m',n')$. It follows that for every $(m,n)\neq (m',n')\in \Nat^2$ we have 
\[\begin{split}\big|\tilde d((m,n),(m',n'))-  & \tilde e(\pi(m,n),\pi(m',n'))\big| = 
\\ & =\big|d(m,m')-e(\pi_1(m,n),\pi_1(m',n'))\big|<2\varepsilon,
\end{split}\]
where $\pi_1(m,n)$ is the first coordinate of $\pi(m,n)$. Therefore $\tilde d\simeq_{2\varepsilon}\tilde e$, so by Lemma \ref{lem:lehciImplikace} we have $\rho_{GH}(\tilde d,\tilde e)\leq\varepsilon$. On the other hand, if  $\rho_{GH}(\tilde d,\tilde e)<\varepsilon<p/2$ then, by Lemma~\ref{lem:GHequivalenceUniformlyDiscrete}, there is a permutation $\pi$ of $\Nat^2$ witnessing $\tilde{d}\simeq_{2\varepsilon}\tilde e$ and then applying Fact~\ref{fact:GHbyCorrespondences} to the correspondence
\[
\Corr:=\big\{(i,j)\in\Nat^2\setsep \exists k,l\in\Nat\; \pi(i,k) = (j,l)\big\},
\]
we easily obtain $\rho_{GH}(d,e)\leq \varepsilon$.

Fix some bijection $\varphi:\Nat \to \Nat^2$ and define $F(d)\in\Met_p$ by $F(d)(i,j):=\tilde{d}(\varphi(i),\varphi(j))$, $(i,j)\in\Nat^2$. Then obviously $(\Nat,F(d))$ is isometric to $(\Nat^2,\tilde{d})$ and the mapping $F:\Met\to \Met_p$ is the reduction. Moreover, it is easy to see that $F$ is one-to-one and continuous.
\end{proof}

\begin{cor}\label{cor:ReductionGHMettoMet^q_p}
Fix real numbers $0<p<q$. Then there is an injective Borel-uniformly continuous reduction from $\rho_{GH}$ on $\Met$ to $\rho_{GH}$ on $\Met^q_p$.

Moreover, the reduction is not only Borel-uniformly continuous, but also Borel-Lipschitz on small distances.
\end{cor}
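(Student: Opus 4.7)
The plan is simply to chain together the two theorems already proved, using Theorem~\ref{thm:ReductionGHMettoMet_1} twice (once to introduce a lower bound, once more at the end to restore a lower bound that gets destroyed in the middle step) and Theorem~\ref{thm:ReductionGHMet_5toMet^3} once to impose the upper bound. Fix any auxiliary $p_1>0$, say $p_1=1$. I will consider the composition
\[
\Met \xrightarrow{\ F_1\ } \Met_{p_1} \xrightarrow{\ F_2\ } \Met^{q-p} \xrightarrow{\ F_3\ } \Met^{q}_{p},
\]
where $F_1$ is the reduction from Theorem~\ref{thm:ReductionGHMettoMet_1} with parameter $p_1$, $F_2$ is the reduction from Theorem~\ref{thm:ReductionGHMet_5toMet^3} applied with parameters $p_1$ and $q-p$ (which is positive since $p<q$), and $F_3$ is the reduction from Theorem~\ref{thm:ReductionGHMettoMet_1} applied with parameter $p$. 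Note that the ``moreover'' part of Theorem~\ref{thm:ReductionGHMettoMet_1} guarantees that $F_3$ sends $\Met^{q-p}$ into $\Met^{(q-p)+p}_p = \Met^q_p$, so the range is indeed as claimed.

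Each of the three maps is an injective Borel-Lipschitz on small distances reduction, so it only remains to verify that this property is closed under composition. Suppose $f_1,f_2$ are Borel-Lipschitz on small distances reductions witnessed by pairs $(C_i,\varepsilon_i)$ for $i=1,2$. Then for $\rho_X(x,y)<\min\{\varepsilon_1,\varepsilon_2/C_1\}$ one has $\rho_Y(f_1x,f_1y)\le C_1\rho_X(x,y)<\varepsilon_2$, hence $\rho_Z(f_2f_1x,f_2f_1y)\le C_2\rho_Y(f_1x,f_1y)\le C_1C_2\rho_X(x,y)$, and the symmetric implication is analogous with threshold $\min\{\varepsilon_2,\varepsilon_1/C_2\}$; thus $f_2\circ f_1$ is Borel-Lipschitz on small distances with constant $C_1C_2$. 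Injectivity and Borelness are obviously preserved. Since Borel-Lipschitz on small distances reductions are in particular Borel-uniformly continuous reductions, this completes the proof. There is no real obstacle here: the corollary is essentially a bookkeeping exercise in which the only thing to verify is that the parameters $p_1,q-p,p$ can be matched so that the composition lands inside $\Met^q_p$, which is exactly why the hypothesis $p<q$ is used.
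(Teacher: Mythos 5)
Your proof is correct and follows exactly the same route as the paper: the chain $(\rho_{GH}\upharpoonright \Met)\leq_{B,u}(\rho_{GH}\upharpoonright \Met_1)\leq_{B,u}(\rho_{GH}\upharpoonright \Met^{q-p})\leq_{B,u}(\rho_{GH}\upharpoonright \Met^q_p)$ via Theorems~\ref{thm:ReductionGHMettoMet_1} and \ref{thm:ReductionGHMet_5toMet^3} is precisely the paper's argument. Your explicit check that Borel-Lipschitz-on-small-distances reductions compose (with constant $C_1C_2$ and a suitably shrunk threshold) is a detail the paper leaves implicit, and it is done correctly.
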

\begin{proof}
By Theorems \ref{thm:ReductionGHMettoMet_1} and \ref{thm:ReductionGHMet_5toMet^3} we have
\[
(\rho_{GH}\upharpoonright \Met)\; \leq_{B,u}\; (\rho_{GH}\upharpoonright \Met_1)\; \leq_{B,u}\; (\rho_{GH}\upharpoonright \Met^{q-p})\; \leq_{B,u}\; (\rho_{GH}\upharpoonright \Met^q_p).
\]
\end{proof}

\section{Continuous orbit equivalence relations}\label{section:orbitPseudometrics}
One of the main supplies of Borel and analytic equivalence relations comes from actions of Polish groups. Let $G$ be a Polish group and $X$ a Polish or standard Borel space. Suppose that $G$ acts on $X$ in a continuous or Borel way. The corresponding \emph{orbit equivalence relation} $E_G$ on $X$ is defined as $x E_G y$ if and only if $\exists g\in G\; (gx=y)$, where $x,y\in X$. The most important and most studied are the countable Borel equivalence relations. Nevertheless, there is now a rather developed theory also for actions of general (so typically not locally compact) Polish groups. See e.g. \cite{Hjo} for a reference. In particular, we highlight the fact that there is a universal orbit equivalence relation (see \cite[Theorem 5.1.9]{gao}, which can be, by the result of Gao and Kechris \cite{GaoKe}, realized as the canonical action of the isometry group of the Urysohn universal metric space $\mathbb{U}$ on the Effros-Borel space $F(\mathbb{U})$ (refer to \cite{ver} or \cite{gao} for information about the Urysohn space). Since then, several natural analytic equivalence relations have been proved to be bi-reducible with the universal orbit equivalence relations, including those that are not per se defined as orbit equivalences; see e.g. \cite{Sabok} and \cite{Zie}. %It is one of the main open problems whether the equivalence relations given by pseudometrics considered in this paper are bi-reducible with the universal orbit equivalence relations. We discuss more about this problem in the last section.

Here we demonstrate on examples that also non-discrete pseudometrics can be defined using actions of Polish groups on standard Borel spaces equipped with an analytic pseudometric. %The definition is less natural though as it needs, besides the action of a group, some analytic metric on the given standard Borel space.
\begin{defin}\label{def:generalOrbitPseudometric}
Let $G$ be a Polish group and let $X$ be a standard Borel $G$-space equipped with an analytic metric $d$ (or, more generally, an analytic pseudometric $d$), on which $G$ acts by isometries. We define an analytic pseudometric $\rho_{G,d}$ induced by the action of $G$ on $X$ as follows. For any $x,y\in X$ we set
\[
\rho_{G,d}(x,y)=\inf\{d(gx,hy)\setsep g,h\in G\}.
\]
We call such pseudometrics \emph{orbit pseudometrics}.

\end{defin}
Clearly, $\rho_{G,d}$ is an analytic pseudometric. Indeed, fix any $r>0$. Then $\{(x,y)\in X^2\setsep \rho_{G,d}(x,y)<r\}=\{(x,y)\in X^2\setsep \exists g,h\; (g,h,x,y)\in D_r\}$, where $D_r=\{(g,h,x,y)\setsep d(gx,hy)<r\}$ is analytic as $d$ is analytic and the action is Borel.

\begin{remark}
Even when $G$ does not act on $X$ by isometries, it is possible to define the orbit pseudometric by setting $$\widetilde{\rho_{G,d}}(x,y)=\inf\Big\{\sum_{i=1}^n \rho_{G,d}(z_i,z_{i+1})\setsep z_1=x,z_{n+1}=y\Big\},$$ where $\rho_{G,d}$ is defined as above.
\end{remark}

In this section we further develop the theory of orbit pseudometrics. The original motivation of the authors was to prove the following.

\begin{thm}\label{thm:ghE1}
$E_1$ is not Borel reducible to $E_{\rho_{GH}}$.
\end{thm}

It follows immediately from Corollary~\ref{cor:E1notReducibletoOurPseudoemtrics}. Actually, our proof admits an interesting generalization. We develop a new notion of a CTR orbit pseudometric, show that $E_1$ is not Borel reducible to $E_{\rho}$ whenever $\rho$ is a CTR orbit pseudometric and that there are many examples of pseudometrics $\rho'$ (including $\rho'=\rho_{GH}$) for which $E_{\rho'}$ is Borel bi-reducible to an equivalence relation given by a CTR orbit pseudometric.

\subsection{CTR orbit pseudometrics}

Note that in the full generality provided by the previous definition, every analytic pseudometric is an orbit pseudometric for trivial reasons. If $\rho$ is any analytic pseudometric on a standard Borel space, then it is an orbit pseudometric of the trivial action of any Polish group (e.g. the trivial group). This leads us to impose some natural restrictions that make the definition more interesting.

\begin{defin}\label{def:RestrictedOrbitPseudometric}
Let $G$, $X$ and $d$ be as in Definition \ref{def:generalOrbitPseudometric}. If we additionally require that $d$ is a complete metric and refines some compatible topology on $X$ (meaning that this topology defines the same Borel structure on $X$ and makes the action of $G$ continuous), then we say the orbit pseudometric $\rho_{G,d}$ is \emph{CTR} (given by a complete topology-refining metric).
\end{defin}

Our natural examples of orbit pseudometrics are CTR. It turns out that most of the natural pseudometrics, not necessarily defined as orbit pseudometrics, are Borel bi-reducible with CTR pseudometrics as well.

The main result of this section, Theorem \ref{thm:E1notreducible}, works for CTR orbit pseudometrics.\\

\noindent{\bf Examples.}

\smallskip

\begin{enumerate}[leftmargin=0cm,itemindent=.5cm,start=1,label={\bfseries \arabic*. }]

\item Let $G$ be a Polish group and $X$ a Borel $G$-space. Let $d$ be the trivial metric on $X$, i.e. $d(x,y)=0$ if $x=y$, and $d(x,y)=1$ otherwise. Then $\rho_{G,d}$ is an analytic pseudometric and $E_{\rho_{G,d}}$ is the standard orbit equivalence relation induced by the action of $G$. It was proved by Becker and Kechris in \cite[Theorem 5.2.1]{BeKe96} that there exists a Polish topology on $X$ inducing the same Borel structure so that the action of $G$ becomes continuous. This shows that $\rho_{G,d}$ is CTR.

\smallskip

\item\label{ex:gh} Consider $ X = \Met_{1/2}^{1} $, $ G = S_{\infty} $, $ (\pi \cdot f)(m, n) = f(\pi^{-1}(m), \pi^{-1}(n)) $ and $ d(f, g) = \sup_{m, n} |g(m,n)-f(m,n)| $ for $ \pi \in S_{\infty} $ and $ f, g \in \Met_{1/2}^{1} $. Then
$$ \rho_{G, d}(f, g) = \inf \{ \varepsilon : f \simeq_{\varepsilon} g \}, $$
where
\[
f \simeq_\varepsilon g \quad \Leftrightarrow \quad \exists \pi \in S_{\infty} \, \forall \{ n, m \} \in [\mathbb{N}]^{2} : |f(\pi(n), \pi(m)) - g(n, m)| \leq \varepsilon.
\]
Then $\rho_{G,d}$ is clearly a CTR orbit pseudometric.
\smallskip

\item Fix some Polish metric space $M$. Let $X=F(M)$ be the Effros-Borel space of all closed subsets of $M$. Let $G$ be $\mathrm{Iso}(M)$, the isometry group of $M$ with the pointwise convergence topology. The canonical action of $G$ on $M$ extends to a Borel action of $G$ on $X$, so $X$ is naturally a Borel $G$-space. Let $d$ be the Hausdorff metric on $X$. Clearly $d$ is Borel and the action of $G$ is by isometries with respect to $d$. We note that the resulting distance $\rho_{G,d}$ appears in \cite{Ency} where it is called \emph{generalized $G$-Hausdorff distance}.

To see that $\rho_{G,d}$ is CTR, consider the \emph{Wijsman topology} on $X$, which is the initial topology with respect to the maps $F\mapsto d_M(x,F)$, where $x\in M$. This topology is Polish (see \cite[Theorem 4.3]{Beer}) and it is compatible with the Effros-Borel structure on $X$ (see \cite[Proposition 2.6.2]{BeKe96}).
Clearly the action of $G$ on $X$ with this topology is continuous. We leave to the reader to verify the easy fact that the Hausdorff metric $d$, which is also complete, refines this topology.

\smallskip

\item\label{ex:BanachSpaceCTR} Fix some separable Banach space $E$ and let $X$ be the standard Borel space of all closed unit balls of closed linear subspaces of $E$, which we identify with the set of all linear subspaces of $E$. That is a Borel subset of $F(B_E)$. Let $G$ be $\mathrm{LIso}(E)$, the linear isometry group of $E$. The canonical action of $G$ on $E$ extends to a Borel action of $G$ on $X$. We define a Borel metric $d$ on $X$ so that $d(U,V)$, for $U,V\in X$, is the Hausdorff distance between $U$ and $V$. The action is again clearly by isometries and $\rho_{G,d}$ is an orbit pseudometric on the space $X$.

Let us check that $\rho_{G,d}$ is CTR. Consider the Wijsman topology restricted to the subspace of all closed unit balls of closed linear subspaces of $E$. This is a subset of the Polish space $F(E)$ with the Wijsman topology, therefore it is separable. We claim that it is closed. Let $F$ be a non-empty closed subset of $E$ that is not a unit ball of any closed linear subspace of $E$. It means that either
\begin{itemize}
\item $0\notin F$;
\item or $F\cancel{\subseteq} B_E$;
\item or for some $x,y\in F$, $\frac{x+y}{2}\notin F$;
\item or for some $x\in F$ and $r\in\left[-1/\|x\|,1/\|x\|\right]$, $rx\notin F$.
\end{itemize}
It is easy to check though that each of these conditions is an open condition, i.e. defines an open neighborhood of $F$ of elements satisfying the same property. Let us show it for the third condition, the rest is left for the reader. Suppose that for some $x,y\in F$, $\frac{x+y}{2}\notin F$. Since $F$ is closed, we have $\delta=\dist_{\Norm}(\frac{x+y}{2},F)>0$. Let $O$ be an open neighborhood of $F$ defined as $\{C\in F(E)\setsep \dist_{\Norm}(x,C)<\delta/2,\dist_{\Norm}(y,C)<\delta/2,\dist_{\Norm}(\frac{x+y}{2},C)>3\delta/4\}$. Pick any $C\in O$. There must exist $x',y'\in C$ such that $\|x-x'\|<\delta/2$, $\|y-y'\|<\delta/2$. However, we have $\frac{x'+y'}{2}\notin C$. Otherwise, since $\|\frac{x+y}{2}-\frac{x'+y'}{2}\|\leq \delta/2$, we would get $\dist_{\Norm}(\frac{x+y}{2},C)\leq \delta/2$, which is a contradiction. This shows that the space of unit balls of Banach subspaces of $E$ with the Wijsman topology is a Polish space. Since the Borel structure of $X$ is the restriction of the Effros-Borel structure on $F(E)$ to $X$ and the Wijsman topology on $F(E)$ is compatible with it, we get that the restriction of the Wijsman topology on $X$ is compatible with the Borel structure of $X$. The action of $G$ on $X$ is continuous, which is again easily verified. The verification that $d$ refines the Wijsman topology is done as in the previous example.

Finally, to check that $d$ restricted to $X$ is complete, it suffices to check that $X$ is closed with respect to $d$ as $d$ is complete on $F(E)$. However, $X$ is closed already with respect to the coarser Wijsman topology.

\smallskip

\item \label{ex:C*-algebras} Kadison and Kastler define in \cite{KadKas} a metric on the space of all \emph{concrete} $C^*$-algebras, i.e. sub-$C^*$-algebras of $B(\mathcal{H})$ for some Hilbert space $\mathcal{H}$. For $A,C$, subalgebras of $B(\mathcal{H})$, their Kadison-Kastler distance $d_{KK}$ is again nothing but the Hausdorff distance $\rho_H^{B(\mathcal{H})}(B_A,B_C)$. Let $\mathcal{H}$ be now a fixed separable infinite-dimensional Hilbert space. We want to define a standard Borel space of all separable $C^*$-subalgebras of $B(\mathcal{H})$. Denote by $B_1(\mathcal{H})$ the closed unit ball in $B(\mathcal{H})$, that is, the set of all operators on $\mathcal{H}$ of norm bounded by $1$. Consider the strong* operator topology ($SOT^*$) on $B_1(\mathcal{H})$. That is, a topology generated by the seminorms $B(\mathcal{H})\ni T\mapsto (\|Tx\|^2 + \|T^*x\|^2)^{1/2}$, $x\in \mathcal{H}$; that is, a net $(T_\alpha)_\alpha$ of operators converges to an operator $T$ if and only if $(T_\alpha)_\alpha$, resp. $(T^*_\alpha)_\alpha$, converge to $T$, resp. $T^*$ in the strong operator topology. It is then easy to see that $(B_1(\mathcal{H}),SOT^*)$ is a Polish space and that all the standard operations such as addition, scalar multiplication, multiplication and involution are continuous with respect to this topology. See \cite[Chapter I.3]{Blackadar} for details.

Let $X$ be the Borel subset of $F(B_1(\mathcal{H}))$, with the Effros-Borel structure inherited from $B_1(\mathcal{H})$ with the $SOT^*$-topology, consisting of all closed unit balls of separable $C^*$-subalgebras of $B(\mathcal{H})$, which we identify with the set of all separable $C^*$-subalgebras of $B(\mathcal{H})$ (note that there are different codings of separable $C^*$-subalgebras of $B(\mathcal{H})$ in the literature, see e.g. \cite{Ke98}). We define a Borel metric $d$ on $X$ so that $d(A,B) = d_{KK}(\Span A,\Span B)$ for $A,B\in X$. Now consider the action of the Polish group $U(\mathcal{H})$, the group of all unitary operators of $\mathcal{H}$ equipped with the strong operator topology (equivalent with the strong* operator topology), on 
$X$ by conjugation. That is, for $A \in X$ and $\varphi\in U(\mathcal{H})$, we have $$\varphi\cdot A = \{\varphi\circ T\circ \varphi^*\setsep T\in A\}.$$ This action defines an orbit pseudometric $\rho_{U(\mathcal{H}),d}$ on $X$.

\smallskip

We now check that $\rho_{U(\mathcal{H}),d}$ is CTR.
Fix some metric $p$ on $B_1(\mathcal{H})$ compatible with the $SOT^*$-topology. The Wijsman topology on $F(B_1(\mathcal{H}))$ induced by $p$ is again a Polish topology. We again show that $X$ is a closed subset of $F(B_1(\mathcal{H}))$ with respect to this topology. This is done similarly as in Example \ref{ex:BanachSpaceCTR}. Pick some closed subset $A\in F(B_1(\mathcal{H}))$. To verify that $A\in X$ we must check that
\begin{itemize}
\item $0\in A$;
\item if $x,y\in A$, then $\frac{x+y}{2}\in A$;
\item if $x\in A$ and $c\in\mathbb{C}$ with $|c|\in\left(0,1/\|x\|\right]$, then $cx\in A$;
\item if $x,y\in A$, then $x\cdot y\in A$;
\item if $x\in A$, then $x^*\in A$.
\end{itemize}
These are all closed conditions for the Wijsman topology, which is where we use that the operations are continuous with respect to the $SOT^*$ topology. The verifications are done similarly as in Example \ref{ex:BanachSpaceCTR}; let us show it for the last condition. Let $A\in F(B_1(\mathcal{H}))$ be such that for some $x\in A$, we have $x^*\notin A$. Then $\delta=p(x^*,A)>0$. Since the $*$-operation is continuous in the $SOT^*$-topology, there exists $\gamma>0$ such that if $p(x,z)<\gamma$, then $p(x^*,z^*)<\delta/2$, for $z\in B_1(\mathcal{H})$. Define an open neighborhood $O=\{B\in X\setsep p(x,B)<\gamma, p(x^*,B)>\delta/2\}$ of $A$ in the Wijsman topology. Clearly, $A\in O$. If $B\in O$, then there is some $z\in B$ with $p(z,x)<\gamma$. By the assumption, $p(z^*,x^*)<\delta/2$. Since $B\in O$ and therefore $p(x^*,B)>\delta/2$, we get $$p(z^*,B)\geq p(x^*,B)-p(x^*,z^*)>0,$$ so $z^*\notin B$.
It follows that the Wijsman topology on $X$ is compatible with the Borel structure of $X$. It is straightforward to check that the action of $G$ on $X$ is continuous. The completeness of $d$ will again follow as soon as we show that $d$ defines a topology which is finer than the Wijsman topology. We do it now. Pick some $x\in B_1(\mathcal{H})$, $A\in X$ and $\varepsilon$. We need to show that the set $O=\{B\in X\setsep |p(x,B)-p(x,A)|<\varepsilon\}$ is open in the topology induced by $d$. We just show that there is $\delta>0$ such that if $d(A,B)<\delta$, then $B\in O$. Here we shall without loss of generality assume that $p(y,z)$, for $y,z\in B_1(\mathcal{H})$, is equal to $$\sum_{i=1}^\infty \frac{\|y(\xi_i)-z(\xi_i)\|+\|y^*(\xi_i)-z^*(\xi_i)\|}{2^{i+1}},$$ where $(\xi_i)_i$ is some countable dense subset of unit vectors of $\mathcal{H}$. Set $\delta=\varepsilon/2$, i.e. suppose that $d(A,B)<\varepsilon/2$. We claim that $B\in O$. Otherwise there either exists $z\in B$ such that $p(x,z)\leq p(x,A)-\varepsilon$, or for all $z\in B$ we have $p(x,z)\geq p(x,A)+\varepsilon$. Suppose the former, the latter is treated analogously. Since $d(A,B)<\varepsilon/2$, there is $y\in A$ with $\|y - z\|<\varepsilon/2$. Therefore for all $i\in\Nat$, $\|y(\xi_i)-z(\xi_i)\|+\|y^*(\xi_i)-z^*(\xi_i)\|<\varepsilon$, so $p(y,z)<\varepsilon$. It follows that $$p(x,A)\leq p(x,z)+p(z,y)<p(x,A),$$ which is a contradiction.

\smallskip

\item In the theory of graph limits (see \cite{Lov} for a reference on this topic),  a \emph{graphon} is a symmetric measurable function $W:([0,1]^2,\lambda^2)\rightarrow [0,1]$, where $\lambda^2$ is the usual Lebesgue measure on $[0,1]^2$. Viewing each graphon $W$ as an element of $L^\infty([0,1]^2)$ we may equip the space $\mathcal{G}$ of all graphons with the weak*-topology coming from the identification of $L^\infty([0,1]^2)$ with $(L^1([0,1]^2))^*$, so that it becomes a compact Polish space, see \cite[Theorem F.4]{Jan}.

Equip now the linear hull of $\mathcal{G}$ with the \emph{cut norm} $\Norm_\square$ given by
\[
\|W\|_\square=\sup_{S,T} \left|\int_{S\times T} W(x,y)\, \mathrm{d} x\, \mathrm{d} y\right|,
\]
where the supremum is taken over all measurable sets $S,T\subseteq [0,1]$. Let now $G$ be the group of all measure preserving measurable bijections $\phi:[0,1]\rightarrow [0,1]$ equipped with the \emph{weak topology}, i.e. subbasic neighborhoods of a transformation $\phi$, given by a measurable set $A\subseteq [0,1]$ and $\varepsilon>0$, are of the form $\{\psi\setsep \lambda(\phi(A)\bigtriangleup \psi(A))<\varepsilon\}$. This is the strong (and also weak) topology, when the transformations are viewed as the corresponding unitary operators in $L^2([0,1])$. $G$ then becomes a Polish group, see \cite[Lemma 2.11]{Hjo} and it acts naturally on the space $\mathcal{G}$ of graphons by 
\[
gW(x,y)=W(g^{-1}x,g^{-1}y),
\]
which is obviously a continuous action.
This action together with the cut norm $\Norm_\square$ gives a pseudometric on $\mathcal{G}$, called \emph{cut distance} and denoted by $\delta_\square$ (see \cite[Section 8.2.2]{Lov}), defined as 
\[
\delta_\square(U,W)=\inf_{g,h\in G} \|gU-hW\|_\square.
\]
One can check it is a Borel pseudometric. There is a connection between $\delta_{\square}$ and ``graph limits'', since the metric quotient of $(\mathcal{G},\delta_\square)$ corresponds to the compact metrizable space of graph limits, which essentially shows that $E_{\delta_\square}$ is smooth. See \cite[Section 3.4]{BCLSV} for corresponding definitions and the result.

We claim that $\rho_{G,\Norm_\square}$ is a CTR orbit pseudometric. The cut norm $\Norm_\square$ refines the weak* topology (see \cite[Lemma 8.22]{Lov}) and we leave to the reader to verify that it is complete.

Let us note that we could work in a slightly more general setting and consider a graphon as a symmetric measurable function $W:(\Omega,\mathcal{A},\mu)^2\rightarrow [0,1]$, where $(\Omega,\mathcal{A},\mu)$ is a standard probability space, that is, a probability space defined on a standard Borel space. Even in this slightly more general setting we would end up with a Borel pseudometric $\delta_\square$.

\end{enumerate}

We conclude this subsection by showing that many standard pseudometrics are Borel bi-reducible with CTR pseudometrics.
\begin{thm}\label{thm:ghCTR}
Let $\rho$ be any analytic pseudometric from the statement of Theorem~\ref{thm:intro1}; in particular, the Gromov-Hausdorff distance $\rho_{GH}$. Then $\rho$ is Borel bi-reducible with a CTR pseudoemtric.
\end{thm}
\begin{proof}
By Lemma~\ref{lem:lehciImplikace} and Lemma~\ref{lem:GHequivalenceUniformlyDiscrete}, we have $ \rho_{GH}(f, g) = (1/2) \rho_{G, d}(f, g) $ if one of the sides is less than $ 1/4 $, where $\rho_{G,d}$ is the CTR pseudometric from Example~\ref{ex:gh}. Therefore the Gromov-Hausdorff distance when restricted to $\Met_{1/2}^1$ is Borel-uniformly continuous bi-reducible with a CTR orbit pseudometric. It follows from this observation and from Theorem~\ref{thm:intro1} that all analytic pseudometrics from the statement are Borel-uniformly continuous bi-reducible with a CTR orbit pseudometric.
\end{proof}

\subsection{Non-reducibility of the equivalence \texorpdfstring{$E_1$}{E1} into CTR orbit pseudometrics}

\begin{defin}
The equivalence relation $ E_{1} $ on $ (2^{\mathbb{N}})^{\mathbb{N}} $ is defined by
$$ x E_{1} y \quad \Leftrightarrow \quad \exists N \forall n \geq N : x(n) = y(n). $$
\end{defin}

Recall that an equivalence relation $E$ is \emph{hypersmooth} if it can be written as an increasing union $\bigcup_n E_n$ of smooth equivalence relations, i.e. equivalence relations Borel reducible to the identity relation. The relation $E_1$ is clearly hypersmooth and it plays a prominent role among hypersmooth equivalences as Kechris and Louveau prove in \cite[Theorem 2.1]{KeLo97} that every hypersmooth equivalence relation is either Borel reducible to $E_0$ on $2^\Nat$, where $xE_0 y$ if and only if $\{n\in\Nat\setsep x(n)\neq y(n)\}$ is finite, or it is Borel bi-reducible with $E_1$.

As they mention in \cite[Section 4]{KeLo97}, it is a delicate question to decide for a given analytic equivalence relation $E$ whether $E_1\leq_B E$. They show it does not happen when $E$ is Borel idealistic (see the end of this section), and when $E$ is an orbit equivalence relation. Here we extend their result for equivalences given by CTR orbit pseudometrics. Our result raises some problems that we discuss at the end of the section.

\begin{thm} \label{thm:E1notreducible}
Let $\rho_{G,d}$ be a CTR orbit pseudometric on a standard Borel space $X$. Then $ E_{1} $ is not Borel reducible to $ E_{\rho_{G, d}} $.
\end{thm}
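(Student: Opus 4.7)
The plan is to argue by contradiction, adapting the classical Kechris--Louveau proof that $E_1$ is not Borel reducible to any orbit equivalence relation of a Polish group action, to the pseudometric setting. The two features of the CTR hypothesis that make this adaptation go through are: the action $G \curvearrowright (X, \tau)$ is continuous for a Polish topology $\tau$ refined by the complete metric $d$, and the pseudometric distance $\rho_{G,d}(x,y)$ is realized up to arbitrary approximation by $d$-distances after a suitable group shift.

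Suppose for contradiction that $f \colon (2^{\mathbb{N}})^{\mathbb{N}} \to X$ is a Borel reduction of $E_1$ to $E_{\rho_{G,d}}$. By the CTR hypothesis we have $\tau$, $d$ as above. A standard Becker--Kechris-style topology refinement allows us to assume that $f$ is continuous from $(2^{\mathbb{N}})^{\mathbb{N}}$ (possibly equipped with a finer Polish topology compatible with its Borel structure) into $(X,\tau)$. The essential technical input is then the following \emph{limit principle}: for every sequence $(y_n) \subseteq X$ with $\rho_{G,d}(y_n, y_{n+1}) < 2^{-n-2}$ there exists $y^{\ast} \in X$ with $\rho_{G,d}(y^{\ast}, y_n) = 0$ for all $n$. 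Indeed, by definition of $\rho_{G,d}$ and because $G$ acts by $d$-isometries, one can pick $k_n \in G$ with $d(y_n, k_n y_{n+1}) < 2^{-n-2}$; setting $g_0 = e$ and $g_{n+1} = g_n k_n$, the sequence $(g_n y_n)$ is $d$-Cauchy, hence by completeness of $d$ converges in $d$ to some $y^{\ast}$, and $\rho_{G,d}(y_n, y^{\ast}) \le d(g_n y_n, y^{\ast}) \to 0$.

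The contradiction is then extracted using the tail structure of $E_1$. Fix $\bar a, \bar b \in (2^{\mathbb{N}})^{\mathbb{N}}$ with $\bar a \not\mathrel{E_1} \bar b$ and set $\bar z^{(n)} := (a_0, \dots, a_{n-1}, b_n, b_{n+1}, \dots)$; each $\bar z^{(n)}$ is $E_1$-equivalent to $\bar b$, and $\bar z^{(n)} \to \bar a$ in the product topology. Thus the images $f(\bar z^{(n)})$ lie in a single $E_{\rho_{G,d}}$-class (that of $f(\bar b)$), and by $\tau$-continuity of $f$ they converge in $\tau$ to $f(\bar a)$. Using the limit principle together with an analytic uniformization and a recursive refinement of the $\bar z^{(n)}$ inside the dense class $[\bar b]_{E_1}$, one aligns the images by group elements $g_n \in G$ so that $(g_n f(\bar z^{(n)}))$ is $d$-Cauchy with $d$-limit equal to $f(\bar a)$; this yields $\rho_{G,d}(f(\bar z^{(n)}), f(\bar a)) \to 0$, so $f(\bar a) \mathrel{E_{\rho_{G,d}}} f(\bar b)$, contradicting the reduction property.

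The main obstacle is precisely the last step of the previous paragraph: bridging the $\tau$-convergence $f(\bar z^{(n)}) \to f(\bar a)$ with $\rho_{G,d}$-convergence to zero. Since $d$ strictly refines $\tau$, $\tau$-continuity of $f$ gives no $d$-control; and because $G$ is not locally compact in general, the group elements $g_n$ furnished by the limit principle cannot be forced to have a convergent subsequence, so one cannot identify the resulting $d$-limit $y^{\ast}$ with $f(\bar a)$ by naive extraction. Overcoming this requires a delicate coordinated recursive construction that, at each stage, uses an analytic uniformization of the relation ``$g f(\bar x)$ lies in a prescribed $d$-neighborhood of the partial limit'' while simultaneously keeping $\bar x$ in $[\bar b]_{E_1}$ and close to $\bar a$ in the product topology. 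It is at this junction that the full strength of the CTR hypothesis---completeness of $d$, refinement of $\tau$ by $d$, and continuity of the $G$-action---is needed to force the $d$-limit to coincide with $f(\bar a)$ and close the argument.
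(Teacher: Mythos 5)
Your overall strategy is the right one (adapt the Kechris--Louveau/Hjorth argument, reduce to a continuous $f$, use completeness of $d$ and the fact that $d$ refines a compatible topology to identify a $d$-limit), and you have correctly located the crux: passing from $\tau$-convergence of images to $\rho_{G,d}$-convergence requires controlling the group elements, which cannot be done by ``naive extraction'' since $G$ need not be locally compact. But the proposal does not actually overcome this obstacle; the last paragraph describes the difficulty and then asserts that ``a delicate coordinated recursive construction'' using ``an analytic uniformization'' closes the gap. Uniformization only furnishes \emph{some} witness $g$ with $d(g f(\bar z^{(n)}), \cdot)$ small; it gives no control on the size of $g$ in a complete metric on $G$, and without that control the shifted sequence $(g_n f(\bar z^{(n)}))$ need not be $d$-Cauchy, and even if it is, its $d$-limit cannot be identified with $f(\bar a)$. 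So the argument as written has a genuine hole exactly where the real work lies.

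The missing idea is a Baire category argument. The paper first proves (Lemma~\ref{lem:comeagersetofgoodpoints}, following Hjorth) that for every neighborhood $W$ of $1_G$ and every $\varepsilon>0$, for a \emph{comeager} set of $x$ a perturbation of a single coordinate of $x$ by a generic small $a$ can be tracked by a group element $g\in W$, i.e.\ $d(\theta(\ldots,x(n)+_2 a,\ldots), g\cdot\theta(x))<\varepsilon$; the proof of that lemma uses the Baire property of analytic sets to cover $G$ by ``large'' pieces on which the tracking element can be taken small. One then applies Kuratowski--Ulam to pick a single generic $x$ and \emph{constructs} $y$ coordinate by coordinate (with $y(n)\neq x(n)$, so $x\,\cancel{E_1}\,y$), simultaneously choosing $g_n$ with $p(g_n,g_{n-1})<2^{-n}$ in a complete metric $p$ on $G$. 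Convergence of $(g_n)$ in $G$, continuity of the action and of $\theta$, and completeness of $d$ together identify the $d$-limit of $g_n\cdot\theta(x_n)$ as $g\cdot\theta(y)$, which yields $\rho_{G,d}(\theta(x),\theta(y))=0$. Your scheme of fixing $\bar a\,\cancel{E_1}\,\bar b$ in advance and interpolating with the splice sequence $\bar z^{(n)}$ cannot exploit this genericity: both endpoints are rigid, so there is no freedom to choose the perturbations inside the comeager sets where small tracking elements exist. (A minor additional point: your ``limit principle'' as stated only gives $\rho_{G,d}(y^{\ast},y_n)\le 2^{-n-1}$, not $=0$; this is harmless when the $y_n$ are pairwise equivalent, but it should be stated that way.)
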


The proof of the theorem is inspired by the proof of \cite[Theorem 4.2]{KeLo97} as presented in \cite[Section 8]{Hjo}.

By the definition of a CTR orbit pseudometric, we shall without loss of generality assume that $X$ is a Polish space and the action of $G$ is continuous. We first need the following lemma.

By $ \forall^{*} $ we mean ``for all elements of a comeager set''.

\begin{lemma}[{based on \cite[Lemma~3.17]{Hjo}}] \label{lem:comeagersetofgoodpoints}
Let $ G $ and $ H $ be Polish groups and $ X $ and $ Y $ be Polish $ G $ and $ H $-spaces. Let $ Y $ be equipped with an analytic pseudometric $ d $, on which $ H $ acts by isometries. Suppose that $ \theta : X \to Y $ is a Borel function such that
$$ \rho_{H, d}(\theta(x), \theta(g \cdot x)) = 0 $$
for all $ x \in X $ and $ g \in G $, i.e. $\theta$ is a Borel homomorphism from $E_G$ to $E_{\rho_{H,d}}$. Then, for every open neighborhood $ W $ of the identity in $ H $ and every $ \varepsilon > 0 $, there is a comeager set of $ x \in X $ for which there is an open neighborhood $ V $ of the identity in $ G $ with
$$ \forall^{*} g \in V \exists h \in W : d(\theta(g \cdot x), h \cdot \theta(x)) < \varepsilon. $$
\end{lemma}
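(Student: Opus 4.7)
The plan is to reformulate the conclusion as a Baire category statement about an analytic subset of $X \times G$ and then exploit its structure near the slice $X \times \{e_{G}\}$. First, by a Becker--Kechris style topology refinement on $X$, I would assume without loss of generality that $\theta$ is continuous, while preserving that $X$ is Polish and that the $G$-action on $X$ is continuous.

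Next, consider the analytic set
$$A := \{(x, g) \in X \times G \setsep \exists h \in W,\ d(\theta(g \cdot x), h \cdot \theta(x)) < \varepsilon\}.$$
Choosing $h = e_{H} \in W$ gives $d(\theta(x), \theta(x)) = 0 < \varepsilon$, so $X \times \{e_{G}\} \subseteq A$. The conclusion of the lemma is equivalent to the statement that the set of $x \in X$ for which the vertical section $A_{x} \subseteq G$ is comeager in some open neighborhood of $e_{G}$ is itself comeager in $X$.

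To establish this, I would apply the Jankov--von Neumann uniformization theorem to the analytic relation $\{(x, g, h) \in X \times G \times W \setsep d(\theta(g \cdot x), h \cdot \theta(x)) < \varepsilon\}$, which projects onto $A$, to obtain a Baire-measurable selector $\sigma : A \to W$. After modifying $\sigma$ on the meager slice $X \times \{e_{G}\}$ so that $\sigma(x, e_{G}) = e_{H}$, the preimage $\widetilde{A} := \sigma^{-1}(W) \subseteq A$ still has the Baire property and contains $X \times \{e_{G}\}$. The key step is then to combine a Kuratowski--Ulam argument on $\widetilde{A}$ with the continuity of $\theta$, the continuity of both group actions on $Y$, and the fact that $H$ acts on $Y$ by $d$-isometries, to deduce that $\widetilde{A}_{x}$ is comeager in some open neighborhood of $e_{G}$ for comeager $x$.

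The main obstacle I foresee is a subtle pitfall: the slice $X \times \{e_{G}\}$ is itself meager in $X \times G$ (as $\{e_{G}\}$ is nowhere dense in the non-discrete Polish group $G$), so the inclusion $X \times \{e_{G}\} \subseteq \widetilde{A}$ does not force the regular open representative of $\widetilde{A}$ to contain any portion of $X \times \{e_{G}\}$ modulo meager in $X$. Hence one cannot simply extract a regular open representative and finish by a naive section argument. Overcoming this requires exploiting the continuity of the $H$-action on $Y$ together with the hypothesis $\rho_{H, d}(\theta(x), \theta(g \cdot x)) = 0$ to spread the allowable witness $h$ over a full neighborhood of $e_{H}$, and then using the continuity of $\theta$ and of the $G$-action to transfer the resulting Baire category statement back to a neighborhood of $e_{G}$ in $G$; the fact that $d$ is merely analytic rather than continuous is what makes this transfer delicate, and is the technical reason for appealing to \cite[Lemma~3.17]{Hjo} rather than to a purely elementary continuity argument.
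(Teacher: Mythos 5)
There is a genuine gap: the step you label ``the key step'' --- deducing that $\widetilde{A}_x$ is comeager in a neighborhood of $e_G$ for comeager $x$ --- is exactly the content of the lemma, and none of the tools you list can carry it out. The hypotheses only give you, for each fixed $x$ and $g$, \emph{some} $h\in H$ (far from $W$ in general) with $d(\theta(g\cdot x), h\cdot\theta(x))<\varepsilon/2$; the problem is to replace this global witness by one in $W$, uniformly for a non-meager set of $g$. Continuity of $\theta$, of the $G$-action, and of the $H$-action on $Y$ are all with respect to the Polish topologies, whereas $d$ is merely an analytic pseudometric with no assumed relation to the topology of $Y$ in this lemma (the refinement hypothesis enters only in the CTR definition used later). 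So ``spreading the witness $h$ over a neighborhood of $e_H$ by continuity of the $H$-action'' and ``transferring back to a neighborhood of $e_G$ by continuity of $\theta$'' give no control whatsoever on $d$-distances; you correctly flag this as the delicate point but then do not supply any mechanism to overcome it. The Jankov--von Neumann selector likewise plays no role in producing such a mechanism. As stated, your argument would apply verbatim to an arbitrary analytic set $A\supseteq X\times\{e_G\}$, for which the conclusion is false (take $A=X\times\{e_G\}$).

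The missing idea, which is the heart of the paper's proof, is a countable decomposition of $G$ combined with the isometry hypothesis. Fix $x$, shrink $W$ to a symmetric $W'$ with $(W')^2\subseteq W$, and choose countably many $h_i\in H$ with $\bigcup_i W'h_i=H$. The hypothesis $\rho_{H,d}(\theta(g\cdot x),\theta(x))=0$ then shows that the analytic sets $C_i'=\{g: \exists h\in W',\ d(\theta(g\cdot x),hh_i\cdot\theta(x))<\varepsilon/2\}$ cover $G$. By the Baire property, some $C_i'$ is comeager in an open set $O_i$ around comeager many $g_0$; and if $g_0$ and $g_1g_0$ both lie in $C_i'$ with witnesses $h',h''\in W'$, the fact that $H$ acts by $d$-\emph{isometries} lets one cancel the common translate $h_i$ and conclude $d(\theta(g_1g_0\cdot x), h''(h')^{-1}\cdot\theta(g_0\cdot x))<\varepsilon$ with $h''(h')^{-1}\in W$. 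This yields the statement for the shifted basepoint $g_0\cdot x$ for comeager $g_0$ (for every $x$), and a standard category argument on the action then gives the conclusion for comeager $x$. No continuity of $\theta$ and no uniformization are needed; the triangle inequality for $d$ and the Baire property of analytic sets do all the work.
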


Let us fix a neighborhood $W$ and $\varepsilon>0$. To prove the lemma, we need the following claim which is an analogue of a claim from the proof of \cite[Lemma~3.17]{Hjo}. In fact, the argument demonstrating that the lemma follows from the claim is the same as in \cite{Hjo}, and so we omit it.

\begin{claim}
For all $ x \in X $, there is a comeager set of $ g_{0} \in G $ for which there exists some open neighborhood $ V $ of the identity in $ G $ with
$$ \forall^{*} g_{1} \in V \exists h \in W : d(\theta(g_{1}g_{0} \cdot x), h \cdot \theta(g_{0} \cdot x)) < \varepsilon. $$ 
\end{claim}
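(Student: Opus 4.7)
My plan is to follow the same scheme as the proof of the analogous claim in Hjorth's Lemma~3.17, modified to account for the fact that $\theta$ is a homomorphism only to $E_{\rho_{H,d}}$ rather than to a genuine $H$-orbit equivalence. Thus for each $g \in G$ we can only extract, for any given tolerance, some $h \in H$ with $d(\theta(gx), h\theta(x)) < \varepsilon/2$ (instead of equality on orbits), and no measurable selection of such $h$ is available; Baire category will have to compensate.

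Fix $x$. I would first construct a refined countable open cover of $H$. For every $h \in H$, conjugation by $h$ is a homeomorphism of $H$, so $h^{-1}Wh$ is an open neighborhood of $e_H$; pick a symmetric open neighborhood $V^{(h)}$ of $e_H$ with $V^{(h)} V^{(h)} \subseteq h^{-1}Wh$, equivalently $h V^{(h)} V^{(h)} h^{-1} \subseteq W$. By second countability of $H$, extract from the open cover $\{hV^{(h)} : h \in H\}$ a countable subcover $\{h_n V_n\}_{n \in \Nat}$, where $V_n := V^{(h_n)}$. For each $n$, set
\[
A_n := \bigl\{g \in G : \exists h \in h_n V_n,\ d(\theta(gx), h\theta(x)) < \tfrac{\varepsilon}{2}\bigr\}.
\]
Since $g \mapsto \theta(gx)$ is Borel, the action $(h,y) \mapsto hy$ is Borel, $d$ is an analytic pseudometric, and $h_n V_n$ is open, each $A_n$ is an existential projection of an analytic set and so has the Baire property. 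By the hypothesis $\rho_{H,d}(\theta(x),\theta(gx)) = 0$, one obtains $G = \bigcup_n A_n$.

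For each $n$, use the Baire property to pick an open set $U_n \subseteq G$ with $A_n \triangle U_n$ meager, and set $C := G \setminus \bigcup_n (A_n \triangle U_n)$, a comeager set. I claim $C$ is the desired comeager set of $g_0$'s. Given $g_0 \in C$, pick $n$ with $g_0 \in A_n$; since $g_0 \notin A_n \triangle U_n$, one has $g_0 \in U_n$, and $A_n \cap U_n$ is comeager in $U_n$. Let $V := U_n g_0^{-1}$, an open neighborhood of $e_G$. For comeagerly many $g_1 \in V$ the product $g_1 g_0$ lies in $A_n \cap U_n$, and so we can choose witnesses $h_0, h_1 \in h_n V_n$ for $g_0$ and $g_1 g_0$ respectively. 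Using that $H$ acts on $(Y,d)$ by isometries,
\[
d(\theta(g_1 g_0 x),\, h_1 h_0^{-1}\theta(g_0 x)) \leq d(\theta(g_1 g_0 x), h_1 \theta(x)) + d(\theta(x), h_0^{-1}\theta(g_0 x)) < \tfrac{\varepsilon}{2} + \tfrac{\varepsilon}{2} = \varepsilon,
\]
while $h_1 h_0^{-1} \in h_n V_n V_n h_n^{-1} \subseteq W$ by the choice of $V_n$ (using symmetry of $V_n$). Hence $h := h_1 h_0^{-1} \in W$ is the required witness for these comeagerly many $g_1 \in V$.

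The main technical point, which the refined cover is designed to handle, is as follows. With a naive cover by translates of a single fixed neighborhood $V_H$ (the first thing one would try), the same computation only yields $h_1 h_0^{-1} \in h_n V_H V_H^{-1} h_n^{-1}$, a conjugate of $V_H V_H^{-1}$ that need not lie in $W$; this is the standard nuisance for non-SIN Polish groups $H$. Letting the neighborhood $V_n$ depend on the point $h_n$ absorbs the conjugation into the cover itself. Apart from this refinement, the argument is a direct transcription of Hjorth's Baire category machinery, with the analytic pseudometric $d$ replacing strict equality within an $H$-orbit; the CTR hypotheses on $d$ are not explicitly needed at this step, though they underpin the surrounding topology.
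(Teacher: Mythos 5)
Your proof is correct and follows essentially the same Baire-category scheme as the paper's: a countable cover of $G$ indexed by a cover of $H$, the Baire property to extract the comeager set and the neighborhood $V = U_n g_0^{-1}$, and the triangle inequality plus isometry of the action to produce $h = h_1 h_0^{-1} \in W$. The only difference is that the paper covers $H$ by left cosets $W'h_i$ of a single symmetric $W'$ with $(W')^2 \subseteq W$, so the two witnesses have the form $h'h_i$ and $h''h_i$ and their quotient $h''(h')^{-1}$ lands in $W'W' \subseteq W$ with no conjugation to absorb — your point-dependent neighborhoods $V^{(h)}$ are a correct fix for a nuisance that this opposite choice of coset orientation avoids altogether.
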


\begin{proof}
Fix $x\in X$ and choose a smaller open neighborhood $ W' $ of $ 1_{H} $ with $ (W')^{-1} = W' $ and $ (W')^{2} \subseteq W $. 
Taking a sequence $ (h_{i})_{i=1}^{\infty} $ in $ H $ such that $ \{ W' \cdot h_{i} : i \in \mathbb{N} \} $ covers $ H $, we obtain a cover of $ G $ by the sets $$C'_i=\big\{g\in G\setsep \exists h\in W'\;(d(\theta(g \cdot x), h h_{i} \cdot \theta(x)) < \varepsilon/2)\big\},\quad i \in\Nat.$$ Indeed, given $ g \in G $, we have $ \rho_{H, d}(\theta(g \cdot x), \theta(x)) = 0 $, and so there is $ h' \in H $ such that $ d(\theta(g \cdot x), h' \cdot \theta(x)) < \varepsilon/2 $. For some $ i \in \mathbb{N} $ and $ h \in W' $, we have $ h' = h h_{i} $.

Since each $C'_i$ is analytic and therefore has the Baire property, there is an open set $O_i$ such that the symmetric difference of $C'_i$ and $O_i$ is meager. Set $C_i=C'_i\cap O_i$. Clearly, $C=\bigcup_{i\in\Nat} C_i$ is comeager. Take any $g_0\in C$. There are $i\in\Nat$ with $g_0\in C_i$. We put $V = O_ig_0^{-1}$. Then $\forall^* g_1\in V$ we have $g_1g_0\in C_i$, so it suffices to check
$$ g_{0} \in C_{i} \, \& \, g_{1} g_{0} \in C_{i} \quad \Rightarrow \quad \exists h \in W : d(\theta(g_{1} g_{0} \cdot x), h \cdot \theta(g_{0} \cdot x)) < \varepsilon $$
for $ g_{0}, g_{1} \in G $. There are $ h', h'' \in W' $ such that $ d(\theta(g_{0} \cdot x), h' h_{i} \cdot \theta(x)) < \varepsilon/2 $ and $ d(\theta(g_{1} g_{0} \cdot x), h'' h_{i} \cdot \theta(x)) < \varepsilon/2 $. Since
\begin{align*}
d(\theta( & g_{1} g_{0} \cdot x), h'' (h')^{-1} \cdot \theta(g_{0} \cdot x)) \\
 & \leq d(\theta(g_{1} g_{0} \cdot x), h'' h_{i} \cdot \theta(x)) + d(h'' (h')^{-1} \cdot \theta(g_{0} \cdot x), h'' h_{i} \cdot \theta(x)) \\
 & = d(\theta(g_{1} g_{0} \cdot x), h'' h_{i} \cdot \theta(x)) + d(\theta(g_{0} \cdot x), h' h_{i} \cdot \theta(x)) < \varepsilon/2 + \varepsilon/2 = \varepsilon,
\end{align*}
the choice $ h = h'' (h')^{-1} $ works.
\end{proof}

As mentioned above, the proof of Lemma~\ref{lem:comeagersetofgoodpoints} is finished as well.

\begin{proof}[Proof of Theorem \ref{thm:E1notreducible}]
In order to get a contradiction, let us assume that $ \theta : (2^{\mathbb{N}})^{\mathbb{N}} \to X $ is a Borel map with
$$ x E_{1} y \Leftrightarrow \rho_{G, d}(\theta(x), \theta(y)) = 0, \quad x, y \in (2^{\mathbb{N}})^{\mathbb{N}}. $$
Due to \cite[Lemma~11.8.2]{kanovei}, we may assume that $ \theta $ is continuous. We notice first that, by Lemma~\ref{lem:comeagersetofgoodpoints}, the subset of $ (2^{\mathbb{N}})^{\mathbb{N}} $ given by
\begin{align*}
C_{n} = \big\{ x : \; & \forall W \textrm{ nbhd of } 1_{G}, \forall \varepsilon > 0, \exists V \textrm{ nbhd of } (0, 0, \dots), \forall^{*} a \in V, \exists g \in W \\
 & d\big(\theta(x(1), \dots, x(n-1), x(n) +_{2} a, x(n+1), \dots), g \cdot \theta(x)\big) < \varepsilon \big\}
\end{align*}
is comeager for every $ n \in \mathbb{N} $. More precisely, we apply the lemma on every $ W $ from a countable neighborhood basis of $ 1_{G} $ and on every $ \varepsilon $ of the form $ 1/k $, $ G $ and $ X $ from the lemma are $ 2^{\mathbb{N}} $ and $ (2^{\mathbb{N}})^{\mathbb{N}} $ with the action $ a \cdot x = (x(1), \dots, x(n-1), x(n) +_{2} a, x(n+1), \dots) $, and $ H $ and $ Y $ from the lemma are the current $ G $ and $ X $.

Using the Kuratowski-Ulam theorem (see e.g. \cite[Theorem~2.46]{Hjo}), we can pick $ x \in (2^{\mathbb{N}})^{\mathbb{N}} $ such that
\begin{align*}
& x = (x(1), x(2), x(3), \dots) \in C_{1}, \\
& \forall^{*} a_{1} \in 2^{\mathbb{N}} : (a_{1}, x(2), x(3), \dots) \in C_{2}, \\
& \forall^{*} a_{1} \in 2^{\mathbb{N}} \, \forall^{*} a_{2} \in 2^{\mathbb{N}} : (a_{1}, a_{2}, x(3), \dots) \in C_{3}, \\
& \quad \vdots
\end{align*}
Indeed, for every $n\in\Nat$ the set $$D'_n=\big\{(x(n),x(n+1),\ldots)\setsep C_n^{(x(n),\ldots)}\text{ is comeager}\big\}$$ is comeager, where $C_n^{(x(n),\ldots)}=\{(x(1),\ldots,x(n-1))\setsep (x(1),\ldots,x(n),\ldots)\in C_n\}$. Set $$D_n=\big\{x\in (2^{\mathbb{N}})^{\mathbb{N}}\setsep (x(1),\ldots,x(n-1))\in (2^{\mathbb{N}})^{n-1}, (x(n),x(n+1),\ldots)\in D'_n\big\}.$$ Then each $D_n$ is comeager in $(2^{\mathbb{N}})^{\mathbb{N}}$ and we can take $x\in \bigcap_n D_n$.

Let $ p $ be a compatible complete metric on $ G $ and let $ g_{0} = 1_{G} $. We show that it is possible to find $ y = (y(1), y(2), y(3), \dots) \in (2^{\mathbb{N}})^{\mathbb{N}} $ and $ g_{1}, g_{2}, \dots \in G $ such that, denoting
$$ x_{n} = (y(1), \dots, y(n), x(n+1), \dots), \quad n = 0, 1, \dots, $$
we have for every $ n \in \mathbb{N} $ that
\begin{itemize}
\item[{(i)}] $ y(n) \neq x(n) $,
\item[{(ii)}] $ p(g_{n}, g_{n-1}) < 1/2^{n} $,
\item[{(iii)}] $ d(g_{n} \cdot \theta(x_{n}), g_{n-1} \cdot \theta(x_{n-1})) < 1/2^{n} $,
\item[{(iv)}] it holds that
\begin{align*}
& x_{n} = (y(1), \dots, y(n), x(n+1), x(n+2), x(n+3), \dots) \in C_{n+1}, \\
& \forall^{*} a_{n+1} \in 2^{\mathbb{N}} : (y(1), \dots, y(n), a_{n+1}, x(n+2), x(n+3), \dots) \in C_{n+2}, \\
& \forall^{*} a_{n+1} \in 2^{\mathbb{N}} \, \forall^{*} a_{n+2} \in 2^{\mathbb{N}} : (y(1), \dots, y(n), a_{n+1}, a_{n+2}, x(n+3), \dots) \in C_{n+3}, \\
& \quad \vdots
\end{align*}
\end{itemize}
Let us note first that (iv) already holds for $ n = 0 $ due to the choice of $ x_{0} = x $. Assume that $ n \in \mathbb{N} $ and that $ y(1), \dots, y(n-1) $ and $ g_{1}, \dots, g_{n-1} $ are already found. Let $ W = \{ g \in G : p(g_{n-1} g^{-1}, g_{n-1}) < 1/2^{n} \} $. Since $ x_{n-1} \in C_{n} $, considering $ \varepsilon = 1/2^{n} $ in the definition of $ C_{n} $, we have
\begin{align*}
 & \exists V \textrm{ nbhd of } (0, 0, \dots), \forall^{*} a \in V, \exists g \in W : \\
 & \quad \quad d(\theta(y(1), \dots, y(n-1), x(n) +_{2} a, x(n+1), \dots), g \cdot \theta(x_{n-1})) < 1/2^{n}.
\end{align*}
Let us take such open neighborhood $ V $. We obtain from condition (iv) for $ n - 1 $ that
\begin{align*}
& \forall^{*} a_{n} \in x(n) +_{2} V : (y(1), \dots, y(n-1), a_{n}, x(n+1), x(n+2), \dots) \in C_{n+1}, \\
& \forall^{*} a_{n} \in x(n) +_{2} V, \; \forall^{*} a_{n+1} \in 2^{\mathbb{N}} : \; (y(1), \dots, a_{n}, a_{n+1}, x(n+2), \dots) \in C_{n+2}, \\
& \quad \vdots
\end{align*}
Hence, we can choose $ y(n) \in x(n) +_{2} V $ such that $ y(n) \neq x(n) $,
$$ \exists g \in W : \, d(\theta(y(1), \dots, y(n-1), y(n), x(n+1), \dots), g \cdot \theta(x_{n-1})) < 1/2^{n} $$
and
\begin{align*}
& (y(1), \dots, y(n-1), y(n), x(n+1), x(n+2), \dots) \in C_{n+1}, \\
& \forall^{*} a_{n+1} \in 2^{\mathbb{N}} : (y(1), \dots, y(n-1), y(n), a_{n+1}, x(n+2), \dots) \in C_{n+2}, \\
& \quad \vdots
\end{align*}
Provided with $ g \in W $ such that $ d(\theta(x_{n}), g \cdot \theta(x_{n-1})) < 1/2^{n} $, we define $ g_{n} = g_{n-1} g^{-1} $. Then $ p(g_{n}, g_{n-1}) < 1/2^{n} $ (due to the choice of $ W $) and $ d(g_{n} \cdot \theta(x_{n}), g_{n-1} \cdot \theta(x_{n-1})) = d(\theta(x_{n}), g \cdot \theta(x_{n-1})) < 1/2^{n} $. Therefore, our choice of $ y(n) $ and $ g_{n} $ works, as conditions (i)--(iv) hold for $ n $.

So, we have seen that there are appropriate $ y = (y(1), y(2), y(3), \dots) $ and $ g_{1}, g_{2}, \dots $ indeed. It is clear from (i) that $ (x, y) \notin E_{1} $. To obtain the promised contradiction, we provide a series of simple arguments resulting to $ (x, y) \in E_{1} $.

Notice that $ x_{n} \to y $. By (ii), the sequence $ g_{1}, g_{2}, \dots $ has a limit $ g $ in $ G $. Considering the continuity of $ \theta $, we obtain $ g_{n} \cdot \theta(x_{n}) \to g \cdot \theta(y) $ in the original topology of $ X $. By (iii), the sequence $ g_{n} \cdot \theta(x_{n}) $ is Cauchy in $ (X, d) $. Since $d$ is a complete and the topology refining metric, this sequence has a limit in $ (X, d) $ which is nothing but the same point $ g \cdot \theta(y) $. Using (iii) again, we arrive at
$$ d(g \cdot \theta(y), g_{n} \cdot \theta(x_{n})) < 1/2^{n+1} + 1/2^{n+2} + \dots = 1/2^{n}, $$
and so
$$ \rho_{G, d}(\theta(y), \theta(x_{n})) < 1/2^{n}, $$
for every $ n \in \mathbb{N} $. Since $ x E_{1} x_{n} $, we have $ \rho_{G, d}(\theta(x), \theta(x_{n})) = 0 $. Thus,
$$ \rho_{G, d}(\theta(x), \theta(y)) \leq \rho_{G, d}(\theta(x), \theta(x_{n})) + \rho_{G, d}(\theta(y), \theta(x_{n})) < 0 + 1/2^{n} $$
for every $ n \in \mathbb{N} $. For this reason, $ \rho_{G, d}(\theta(x), \theta(y)) = 0 $ and, consequently, $ x E_{1} y $.
\end{proof}

\begin{cor}\label{cor:E1notReducibletoOurPseudoemtrics}
The relation $ E_{1} $ is not Borel reducible to $E_\rho$, where $\rho$ is any of the pseudometrics from Theorem \ref{thm:intro1} \eqref{thm:intro1:(1)}or $\rho$ is any of the CTR pseudometrics mentioned in the examples above.
\end{cor}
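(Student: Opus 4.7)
The plan is to deduce the corollary directly from Theorem \ref{thm:E1notreducible} by handling the two classes of pseudometrics separately, using in the second case a transfer through Borel-uniformly continuous bi-reducibility.

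For the CTR pseudometrics listed explicitly in the examples of Section \ref{subsection:orbitPseudometrics}, there is nothing to do: each of them is by construction of the form $\rho_{G,d}$ with $d$ a complete, topology-refining metric on which the Polish group $G$ acts by isometries, so Theorem~\ref{thm:E1notreducible} applies verbatim and gives $E_1 \not\leq_B E_{\rho_{G,d}}$.

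For the pseudometrics $\rho$ appearing in Theorem~\ref{thm:intro1}(\ref{thm:intro1:(1)}), I would invoke the second theorem of the introduction (i.e.\ the companion statement saying that each such $\rho$ is Borel-uniformly continuous bi-reducible with some CTR orbit pseudometric $\rho_{G,d}$). The key observation is the elementary fact noted just after Definition~\ref{defin:Borel-UnifRedukce}: a Borel-uniformly continuous reduction $f$ between pseudometrics automatically reduces the associated equivalence relations in the usual Borel sense, since $\rho_X(x,y)=0$ iff $\rho_X(x,y)<\delta_X$ for every $\delta_X>0$ and likewise on the other side. Consequently $\rho \sim_{B,u} \rho_{G,d}$ yields $E_\rho \sim_B E_{\rho_{G,d}}$. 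If we had $E_1 \leq_B E_\rho$, composing the reduction with the Borel reduction $E_\rho \leq_B E_{\rho_{G,d}}$ would give $E_1 \leq_B E_{\rho_{G,d}}$, contradicting Theorem~\ref{thm:E1notreducible}. Hence $E_1 \not\leq_B E_\rho$.

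There is essentially no obstacle here: the corollary is a pure bookkeeping step. The only thing to be careful about is to have the statement ``each pseudometric from Theorem~\ref{thm:intro1}(\ref{thm:intro1:(1)}) is Borel-uniformly continuous bi-reducible with a CTR orbit pseudometric'' genuinely available at this point of the paper; this is precisely the content of the second boxed theorem of the introduction, whose proof is carried out later. Modulo this, the whole argument is the short transfer sketched above.

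\begin{proof}
If $\rho = \rho_{G,d}$ is one of the CTR orbit pseudometrics described in the examples of Section~\ref{subsection:orbitPseudometrics}, then Theorem~\ref{thm:E1notreducible} directly gives $E_1 \not\leq_B E_\rho$.

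Now let $\rho$ be one of the pseudometrics listed in Theorem~\ref{thm:intro1}(\ref{thm:intro1:(1)}). By the second theorem of the introduction, there exists a CTR orbit pseudometric $\rho_{G,d}$ with $\rho \sim_{B,u} \rho_{G,d}$. A Borel-uniformly continuous reduction $f$ witnessing $\rho \leq_{B,u} \rho_{G,d}$ automatically satisfies
\[
\rho(x,y) = 0 \iff \rho_{G,d}(f(x),f(y)) = 0,
\]
as observed after Definition~\ref{defin:Borel-UnifRedukce}, so $E_\rho \leq_B E_{\rho_{G,d}}$. If we had $E_1 \leq_B E_\rho$, composing the two Borel reductions would yield $E_1 \leq_B E_{\rho_{G,d}}$, contradicting Theorem~\ref{thm:E1notreducible}. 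Therefore $E_1 \not\leq_B E_\rho$.
\end{proof}
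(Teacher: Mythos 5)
Your proposal is correct and follows essentially the same route as the paper: apply Theorem~\ref{thm:E1notreducible} directly to the CTR examples, and for the pseudometrics of Theorem~\ref{thm:intro1}\eqref{thm:intro1:(1)} transfer through the Borel-uniformly continuous bi-reducibility with a CTR orbit pseudometric (the paper traces this concretely through Example~\ref{ex:gh} and Theorem~\ref{thm:intro1}, whereas you cite the packaged statement of the second introductory theorem, which amounts to the same thing). The key observation that a Borel-uniformly continuous reduction induces a Borel reduction of the associated equivalence relations is exactly the remark following Definition~\ref{defin:Borel-UnifRedukce}, so no gap remains.
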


\begin{proof}
Theorem~\ref{thm:E1notreducible} can be directly applied to the CTR orbit pseudometrics from the examples above and, by Theorem~\ref{thm:ghCTR}, also to the other distances from the statement.
\end{proof}

The corollary has an important consequence that ought to be investigated further. It has been conjectured (see e.g. the introduction in \cite{kanovei}, or \cite[Conjecture 1]{HjoKe97} and \cite[Question 10.9]{Hjo} where it was stated for Borel equivalence relations) that the equivalence relation $E_1$ is the least equivalence which is not Borel reducible to an orbit equivalence relation. This combined with Corollary \ref{cor:E1notReducibletoOurPseudoemtrics} suggests two different scenarios:
\begin{itemize}
\item If the conjecture is true (for the analytic equivalence relations), then the equivalence relations given by pseudometrics from Theorem \ref{thm:intro1} \eqref{thm:intro1:(1)}
are reducible to an orbit equivalence relation. That would actually imply that they are bi-reducible with the universal orbit equivalence relation, see the discussion in Section \ref{section:problems}.
\item If one feels that the equivalence relations given by pseudometrics from Theorem \ref{thm:intro1} \eqref{thm:intro1:(1)} should be different from orbit equivalence relations, then he or she is led to the reconsideration of the conjecture.
\end{itemize}
\begin{conjecture}
The equivalence $E_1$ is \emph{not} the least analytic equivalence relation which is not Borel reducible to an orbit equivalence.
\end{conjecture}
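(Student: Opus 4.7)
Since the final statement is presented as a conjecture, the following is a speculative plan for an attack rather than a sketch of an expected proof. The strategy is to exhibit a concrete analytic equivalence relation witnessing the failure of minimality: find an analytic equivalence relation $E$ such that $E$ is not Borel reducible to any orbit equivalence relation \emph{and} $E_{1}$ is not Borel reducible to $E$. The natural candidates, in view of the surrounding material, are the equivalence relations $E_{\rho}$ arising from the pseudometrics in Theorem~\ref{thm:intro1}\eqref{thm:intro1:(1)}, say $\rho = \rho_{GH}$ or $\rho = \rho_{K}$. By Corollary~\ref{cor:E1notReducibletoOurPseudoemtrics} the second requirement is already met, so the whole problem reduces to proving that $E_{\rho}$ is not Borel reducible to any orbit equivalence.

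The first step in the plan is therefore to isolate a structural obstruction that CTR orbit pseudometrics force on the induced equivalence relation but which $E_{\rho_{GH}}$ (or $E_{\rho_{K}}$) demonstrably violates. A promising direction is to quantify the ideas in the proof of Theorem~\ref{thm:E1notreducible}: there, completeness and topology-refinement of $d$ combined with a Kuratowski--Ulam/fusion construction rules out reductions from $E_{1}$. One would try to dualize this, seeking a category-type argument which shows that if $\theta \colon (X, \rho) \to Y$ is a Borel homomorphism into a Polish $H$-space $Y$ with $\theta$ reducing $E_{\rho}$ to $E_{H}$, then on a comeager set local $\rho$-variations must be absorbed by local $H$-translations in a uniform way — incompatibly with the metric geometry of $\rho_{GH}$ or $\rho_{K}$. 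Concretely, one would look for a Borel parametrized family of $\rho$-inequivalent spaces whose mutual distances form a ``thick'' pattern (e.g.\ coming from a continuum of distinct scalings, gluings, or $\ell_{p}$-type deformations) that cannot be simultaneously straightened by any single Polish-group action.

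The second step is to verify that this obstruction is genuinely present. For the Kadets (resp.\ Gromov--Hausdorff) distance one would try to produce, inside a single Borel-parametrized family, a configuration witnessing failure of ``Borel idealisticness'' in an appropriate quantitative form, in the spirit of Kechris--Louveau's and Becker--Kechris's obstructions but adapted to the metric setting developed in Section~\ref{section:pseudometrics}. The concrete inputs here would be Lemma~\ref{lem:KadetsDescription} (or Fact~\ref{fact:GHbyCorrespondences}), which give correspondence-based descriptions flexible enough to build the required pathological families.

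The hard part — indeed the reason this is a conjecture rather than a theorem — is exactly this second step. As the authors note, it is quite possible that $E_{\rho_{GH}}$, $E_{\rho_{K}}$, etc.\ are in fact bi-reducible with the universal orbit equivalence relation, in which case no such obstruction exists and the conjecture is false. Any honest attack therefore has to either (i) produce an entirely new invariant that distinguishes $E_{\rho}$ from all orbit equivalences (something no existing technique in invariant descriptive set theory yields), or (ii) find an altogether different analytic equivalence $E$, outside the class $E_{\rho}$ of this paper, playing the role of the counterexample. The principal obstacle is the absence of any known \emph{positive} method for demonstrating non-reducibility to orbit equivalence relations beyond $E_{1}$-style hypersmoothness and Becker's idealistic obstruction, so genuine progress on the conjecture likely requires developing new such methods.
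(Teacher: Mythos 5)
The statement you were asked about is a conjecture; the paper offers no proof of it, only the motivating observation that if any of the relations $E_{\rho}$ from Theorem~\ref{thm:intro1}\eqref{thm:intro1:(1)} fails to be Borel reducible to an orbit equivalence relation, then by Corollary~\ref{cor:E1notReducibletoOurPseudoemtrics} that relation witnesses the conjecture. Your plan is exactly this: take $E_{\rho_{GH}}$ or $E_{\rho_{K}}$ as the candidate, note that $E_{1}\not\leq_{B}E_{\rho}$ is already established, and observe that the only missing ingredient is non-reducibility of $E_{\rho}$ to an orbit equivalence — which is precisely the paper's own main open question. So you have correctly identified both the intended strategy and the genuine open gap; your further speculation about ``dualizing'' the fusion argument of Theorem~\ref{thm:E1notreducible} goes beyond anything the paper attempts and is, as you say yourself, unsupported by existing techniques. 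Nothing to correct here, but also nothing proved — by you or by the authors.
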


We note that the conjecture that $E_1$ is the least equivalence which is not Borel reducible to an orbit equivalence relation has been verified affirmatively by Solecki for the special class of equivalence relations $E_\mathcal{I}$ on $2^\Nat$, where $\mathcal{I}$ is an ideal of subsets of $\Nat$. We then have $x E_\mathcal{I} y$ if and only if $\{n\in\Nat\setsep x(n)\neq y(n)\}\in\mathcal{I}$. Note that one can view each such ideal $\mathcal{I}$ as a subgroup of $2^\Nat$. Call $\mathcal{I}$ \emph{polishable} if there exists a Polish group topology on $\mathcal{I}$ producing the same Borel structure as the standard topology on $\mathcal{I}$ inherited from $2^\Nat$.  The following follows from \cite[Theorem 2.1]{Sol} and \cite[Corollary 11.8.3]{kanovei}.
\begin{thm}[Solecki]
Let $\mathcal{I}$ be an analytic ideal on $\Nat$. Then $E_1\leq_B E_\mathcal{I}$ if and only if $\mathcal{I}$ is not polishable.
\end{thm}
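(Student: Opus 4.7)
The plan is to handle the two directions separately: the direction from polishability to non-reducibility follows essentially from Theorem~\ref{thm:E1notreducible}, while the converse is Solecki's deeper construction from \cite[Theorem 2.1]{Sol}.

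First I would show that if $\mathcal{I}$ is polishable, then $E_{1}\not\leq_{B}E_{\mathcal{I}}$. Let $\tau$ be a Polish group topology on $\mathcal{I}$ producing its Borel structure as a subset of $2^{\Nat}$. The group $(\mathcal{I},\tau)$ acts on $X=2^{\Nat}$ via the Borel action $a\cdot x=a+_{2}x$; since $\{n\setsep (a+_{2}x)(n)\neq x(n)\}=\operatorname{supp}(a)$, the orbits of this action are exactly the $E_{\mathcal{I}}$-classes. By the Becker--Kechris theorem one can refine the standard topology on $X$ to a Polish topology inducing the same Borel structure and making the action continuous. Take $d$ to be the discrete $0/1$ metric on $X$: it is complete, the action is trivially by $d$-isometries, and since $d$ generates the discrete topology it refines any coarser topology on $X$. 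Hence $\rho_{\mathcal{I},d}$ is a CTR orbit pseudometric with $E_{\rho_{\mathcal{I},d}}=E_{\mathcal{I}}$, and Theorem~\ref{thm:E1notreducible} yields $E_{1}\not\leq_{B}E_{\mathcal{I}}$. This corresponds to \cite[Corollary~11.8.3]{kanovei}.

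For the converse, I would follow \cite[Theorem~2.1]{Sol}: assuming $\mathcal{I}$ is not polishable, the goal is to construct a Borel reduction $\theta:(2^{\Nat})^{\Nat}\to 2^{\Nat}$ of $E_{1}$ to $E_{\mathcal{I}}$. The plan is to produce a pairwise disjoint family $(A_{n,i})_{n\in\Nat,\,i\in 2^{\Nat}}$ of subsets of $\Nat$ such that, for any two sequences $(i_{n})_{n}$ and $(j_{n})_{n}$, the symmetric difference $\bigcup_{n} A_{n,i_{n}}\,\triangle\,\bigcup_{n} A_{n,j_{n}}$ belongs to $\mathcal{I}$ precisely when $(i_{n})_{n}$ and $(j_{n})_{n}$ agree eventually. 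Then $\theta((i_{n})_{n}):=\chi_{\bigcup_{n} A_{n,i_{n}}}$ is Borel and witnesses $E_{1}\leq_{B}E_{\mathcal{I}}$.

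The hard part will be the construction of this disjoint family. Solecki obtains it from his structural understanding of analytic polishable ideals via lower semicontinuous submeasures: a non-polishable analytic ideal fails to admit a controlling submeasure, and this failure can be harnessed to guarantee that finite perturbations stay inside $\mathcal{I}$ while perturbations along infinitely many blocks escape it. I would not attempt to shortcut this step and would instead invoke Solecki's theorem directly, combining it with the first direction as described.
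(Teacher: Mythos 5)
Your proposal takes essentially the same route as the paper, which gives no proof of its own and simply notes that the theorem follows from \cite[Theorem 2.1]{Sol} (the hard direction, which you likewise invoke as a black box) and \cite[Corollary 11.8.3]{kanovei} (the easy direction, which you correctly rederive from Theorem~\ref{thm:E1notreducible} by viewing $E_{\mathcal{I}}$ as the orbit equivalence relation of the translation action of the Polish group $(\mathcal{I},\tau)$ on $2^{\Nat}$ and taking $d$ to be the discrete metric, exactly as in Example 1 of Section~\ref{subsection:orbitPseudometrics}). One small remark: your heuristic description of Solecki's construction asks for a pairwise disjoint family of subsets of $\Nat$ indexed by $\Nat\times 2^{\Nat}$, which cannot consist of nonempty sets since the index set is uncountable; this is harmless only because you explicitly defer that direction to the cited theorem rather than relying on the sketch.
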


It is mentioned in \cite[Chapter 8]{gao} that a plausible conjecture is that orbit equivalence relations coincide with the idealistic equivalence relations, which is motivated by the fact that Kechris and Louveau prove in \cite{KeLo97} that $E_1$ is not Borel reducible to any Borel idealistic equivalence relation. Note that Kechris and Louveau in \cite{KeLo97} pose the problem whether Borel idealistic equivalence relations coincide with Borel equivelence relations $E$ such that $E_1\cancel{\leq}_B E$.

Recall that an equivalence relation $E$ on a standard Borel space $X$ is \emph{idealistic} if for every equivalence class $C\subseteq X$ of $E$ there is a $\sigma$-ideal $\mathcal{I}_C$ of subsets of $C$ such that
\begin{itemize}
\item $C\notin \mathcal{I}_C$;
\item for every Borel set $A\subseteq X^2$ the set $\{x\in X\setsep \{y\in [x]_E\setsep (x,y)\in A\}\in \mathcal{I}_{[x]_E}\}$ is Borel.
\end{itemize}
In view of the conjecture, the following is natural to be investigated.
\begin{question}
Are the equivalences $E_\rho$, where $\rho$ is a CTR orbit pseudometric, idealistic?
\end{question}

\section{Distances are not Borel}\label{section:notBoreldist}
Although the pseudometrics are in general analytic, the natural examples mentioned in Theorem~\ref{thm:intro1} and thoroughly investigated in the complementary paper of the authors (\cite{CDKpart2}) have a special form. They are analytic non-Borel, however by combination of results from \cite{CDKpart2} and from \cite{BYDNT} (or see Appendix~\ref{sectionGames} for alternative and more general proofs), the equivalence classes of all these pseudometrics, except the uniform distance for which we do not know the answer, are Borel. It is then of interest to investigate whether for any of those pseudometrics $\rho$ and an element $A$ from the domain of $\rho$ we have that the sets $\{B\setsep \rho(A,B)\leq r\}$ are Borel, where $r>0$. This question was raised for the Gromov-Hausdorff and Kadets distances in \cite[Question 8.4 and p. 28]{BYDNT}. The goal of this section is to provide a negative answer and to show that whenever $\rho$ is a pseudometric to which the Kadets distance is reducible, then there are $\rho$-balls which are not Borel. The following is thus the main result of this section answering the question from \cite{BYDNT} for the Kadets distance, and using Theorem~\ref{thm:intro1} it also provides the answer for the Gromov-Hausdorff distance.
\begin{thm}\label{thm:distancenotBorel}
Let $ \rho $ be an analytic pseudometric on a standard Borel space $ P $. If $ \rho_{K} $ on $ \Banach $ is Borel-uniformly continuous reducible to $ \rho $ on $ P $, then the pseudometric $ \rho $ is not Borel. In fact, there is $ x \in P $ such that the function $ \rho(x, \cdot) $ is not Borel.
\end{thm}

The rest of the section is devoted to the proof.
Let us introduce some notation and definitions first. By $ \mathcal{P}(\mathbb{N}) $ we denote the set of all subsets of $ \mathbb{N} $ endowed with the coarsest topology for which $ \{ A \in \mathcal{P}(\mathbb{N}) : n \in A \} $ is clopen for every $ n $. Obviously, $ \mathcal{P}(\mathbb{N}) $ is nothing else than a copy of the Cantor space $ 2^{\mathbb{N}} $. Further, by $ K(\mathcal{P}(\mathbb{N})) $ we mean the hyperspace of all compact subsets of $ \mathcal{P}(\mathbb{N}) $ endowed by the Vietoris topology.

For $ E \subseteq \mathbb{N} $ and $ x \in c_{00} $, we denote by $ Ex $ the element of $ c_{00} $ given by $ Ex(n) = x(n) $ for $ n \in E $ and $ Ex(n) = 0 $ for $ n \notin E $.

If $ X $ and $ Y $ are Banach spaces, then by $ X \oplus_{1} Y $ we mean the direct sum $ X \oplus Y $ with the norm $ \Vert (x, y) \Vert = \Vert x \Vert + \Vert y \Vert $. If $ X_{1}, X_{2}, \dots $ is a sequence of Banach spaces, then its $ \ell_{1} $-sum $ (\bigoplus X_{n})_{\ell_{1}} $ is defined as the space of all sequences $ x = (x_{1}, x_{2}, \dots), x_{k} \in X_{k}, $ such that $ \Vert x \Vert := \sum_{k=1}^{\infty} \Vert x_{k} \Vert < \infty $.

We say that a sequence $ G_{1}, G_{2}, \dots $ of finite-dimensional Banach spaces is \emph{dense} if for any finite-dimensional Banach space $ G $ and any $ \varepsilon > 0 $, there is $ n \in \mathbb{N} $ such that $ \mathrm{dim} \, G_{n} = \mathrm{dim} \, G $ and $ \rho_{BM}(G_{n}, G) < \varepsilon $.

In the context of Banach spaces, by a basis we mean a Schauder basis. By a \emph{basic sequence} we mean a basis of its closed linear span. A basis $ \{ x_{i} \}_{i=1}^{\infty} $ of a Banach space $ X $ is said to be \emph{shrinking} if
$$ X^{*} = \overline{\mathrm{span}} \{ x_{1}^{*}, x_{2}^{*}, \dots \}, $$
where $ x_{1}^{*}, x_{2}^{*}, \dots $ is the dual basic sequence $ x_{n}^{*} : \sum_{i=1}^{\infty} a_{i}x_{i} \mapsto a_{n} $.

We say that a sequence $ x_{1}, x_{2}, \dots $ of non-zero vectors in a Banach space $ X $ is \emph{$ c $-equivalent to the standard basis of $ \ell_{1} $} if $ \Vert \sum_{k=1}^{n} \alpha_{k} x_{k} \Vert \geq \frac{1}{c} \sum_{k=1}^{n} \Vert \alpha_{k} x_{k} \Vert $ for all $ n \in \mathbb{N} $ and $ \alpha_{1}, \dots, \alpha_{n} \in \mathbb{R} $.

\begin{prop} \label{propnonBoreldist}
Let us consider the space
$$ X = \Big( \bigoplus G_{n} \Big)_{\ell_{1}}, $$
where $ G_{1}, G_{2}, \dots $ is a dense sequence of finite-dimensional spaces. Then, for every $ \varepsilon > 0 $, there exists a Borel mapping $ \mathfrak{S} : K(\mathcal{P}(\mathbb{N})) \to \Banach $ such that

{\rm (a)} if $ \mathcal{A} \in K(\mathcal{P}(\mathbb{N})) $ contains an infinite set, then $ \rho_{BM}(\mathfrak{S}(\mathcal{A}), X) \leq \varepsilon $, and thus $ \rho_{K}(\mathfrak{S}(\mathcal{A}), X) \leq \varepsilon $,

{\rm (b)} if $ \mathcal{A} \in K(\mathcal{P}(\mathbb{N})) $ consists of finite sets only, then $ \mathfrak{S}(\mathcal{A}) $ contains a normalized $ 1 $-separated shrinking basic sequence, and thus $ \rho_{K}(\mathfrak{S}(\mathcal{A}), X) \geq 1/8 $.
\end{prop}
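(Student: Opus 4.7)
The plan is a descriptive-set-theoretic reduction: the set $\{\mathcal{A} \in K(\mathcal{P}(\mathbb{N})) : \mathcal{A} \text{ contains an infinite set}\}$ is analytic but not Borel in $K(\mathcal{P}(\mathbb{N}))$ (via a classical continuous reduction from the set of ill-founded trees on $\mathbb{N}$), and I will design $\mathfrak{S}$ so that this set is reduced via $\mathfrak{S}$ precisely to a Kadets-ball around $X$. Once $\mathfrak{S}$ satisfying (a) and (b) is constructed, the non-Borelness of the Kadets-ball (and hence of $\rho_{K}(X, \cdot)$) follows immediately.

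For the construction, I fix a canonical enumeration $(e_{k})_{k \in \mathbb{N}}$ of the standard basis of $X$ coming from the decomposition $X = (\bigoplus G_{n})_{\ell_{1}}$ and let $B_{n}$ be the block of indices of $G_{n}$. Given $\mathcal{A} \in K(\mathcal{P}(\mathbb{N}))$, I define a \emph{Schreier-style} norm $\|\cdot\|_{\mathcal{A}}$ on $c_{00}$ using admissible sets drawn from $\mathcal{A}$ together with the $G_{n}$-norms (after a harmless augmentation of $\mathcal{A}$ by singletons and the empty set to normalize the canonical basis); the key design principle is that the presence of an infinite $A \in \mathcal{A}$ forces $\|\cdot\|_{\mathcal{A}}$ on $A$-supported vectors to essentially match the $X$-norm, while every finite admissible $A$ contributes only over a bounded support. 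Let $\mathfrak{S}(\mathcal{A})$ be the completion of $(c_{00}, \|\cdot\|_{\mathcal{A}})$. Borelness of $\mathfrak{S}$ follows from continuity of $\mathcal{A} \mapsto \|x\|_{\mathcal{A}}$ in the Vietoris topology for each fixed $x \in c_{00}$, combined with the coding from Definition~\ref{defin:spaceOfBanachSpaces}.

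For (a), if $A_{\infty} \in \mathcal{A}$ is infinite, the subspace of $A_{\infty}$-supported vectors inherits a norm that is essentially the $\ell_{1}$-sum of (subspaces of) infinitely many $G_{n}$'s; by density of $(G_{n})$, this is $(1+\varepsilon)$-isomorphic to $X$ itself. Calibrating the remaining part of $\|\cdot\|_{\mathcal{A}}$ so that complementary coordinates contribute only a negligible perturbation yields $\rho_{BM}(\mathfrak{S}(\mathcal{A}), X) \leq \varepsilon$. For (b), when $\mathcal{A}$ consists only of finite sets, the canonical basis contains, after passing to a block subsequence with supports tending to infinity along $\mathbb{N}$, a $1$-separated normalized block sequence $(z_{j})$ that is $c_{0}$-equivalent: since every admissible $A \in \mathcal{A}$ is finite, any such $A$ meets the support of at most one $z_{j}$ for $j$ large, so the supremum defining $\|\sum a_{j}z_{j}\|_{\mathcal{A}}$ is bounded above by $\max_{j}|a_{j}|$, while the singleton contribution gives the matching lower bound; in particular $(z_{j})$ is shrinking. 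The implication $\rho_{K}(\mathfrak{S}(\mathcal{A}), X) \geq 1/8$ then follows because a Kadets copy within distance $1/8$ perturbs $(z_{j})$ into a $3/4$-separated sequence in $B_{X}$; by a gliding-hump argument in the $\ell_{1}$-sum $X$, any such sequence has a subsequence almost-isometrically equivalent to the $\ell_{1}$-basis, whose Kadets-pullback produces an $\ell_{1}$-subsequence of $(z_{j})$, contradicting shrinking.

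The main obstacle is the delicate construction of $\|\cdot\|_{\mathcal{A}}$ that simultaneously achieves (a) and (b) with the claimed quantitative constants: the Schreier-like norm must faithfully recover the $X$-structure in the presence of an infinite admissible set, yet avoid doing so in any way that would corrupt the $c_{0}$-equivalence argument in (b), even in degenerate cases such as $\mathcal{A} = \{\emptyset\}$ where a naive norm would collapse to $X$ itself. The quantitative gliding-hump step in the last part of (b) likewise requires careful use of the $\ell_{1}$-sum structure of $X$ together with the density of $(G_{n})$.
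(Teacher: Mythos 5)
Your overall strategy (a Hurewicz-type reduction, with the dichotomy ``infinite admissible set $\Rightarrow$ BM-close to $X$'' versus ``only finite sets $\Rightarrow$ a separated shrinking sequence, hence Kadets-far from $X$ by a Schur-type argument'') is the right one, and your endgame for (b) — perturb the sequence into $B_X$, extract an $\ell_1$-subsequence, pull it back — matches the paper. But the heart of the proposition is the construction of $\|\cdot\|_{\mathcal A}$, and what you describe cannot deliver part (a). A one-level Schreier-style norm (a supremum over admissible families of sums of pieces) degenerates: if a single admissible set may cover $\operatorname{supp}(x)$ and the inner norm is the $X$-norm, the space collapses to $X$ for every nonempty $\mathcal A$, killing (b); if instead large norms can only be extracted along sets $A\in\mathcal A$, then for an infinite $A_\infty$ with infinite complement the complementary coordinates span an infinite-dimensional $c_0$-like subspace — they are \emph{not} a ``negligible perturbation'' that can be ``calibrated away'' — and a space containing $c_0$ cannot be Banach--Mazur $\varepsilon$-close to the Schur space $X$. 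Moreover, even on the $A_\infty$-supported part, an $\ell_1$-sum of \emph{subspaces} of infinitely many $G_n$'s selected by $A_\infty$ need not be close to $X$ at all (e.g.\ if $A_\infty$ meets each block in one coordinate you just get $\ell_1$, and $X$ does not embed into $\ell_1$); density of $(G_n)$ is not inherited by arbitrary selections. The paper resolves both problems at once: it uses the \emph{recursively} defined Tsirelson-type norm $T[\mathcal A_1,\theta]$ with $\theta=e^{-\varepsilon}$ (the recursion is what makes an infinite $A\in\mathcal A_1$ force the \emph{whole} space, not just a subspace, to be $e^{\varepsilon}$-isomorphic to an $\ell_1$-sum of finite-dimensional pieces, while finite admissible sets only yield finitely many levels of splitting, which is why the basis is shrinking — itself a nontrivial fact quoted from Argyros--Deliyanni, not a one-line ``each $A$ meets one block'' computation), and then sets $\mathfrak S(\mathcal A)=T[\mathcal A_1,\theta]\oplus_1 X$, so that in case (a) the resulting family of finite-dimensional summands is automatically dense and a back-and-forth argument gives $\rho_{BM}=0$ to $X$ from the $\ell_1$-sum structure alone.

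Two smaller points. First, your claim that a $3/4$-separated normalized sequence in $X$ has a subsequence ``almost-isometrically equivalent to the $\ell_1$-basis'' is false as stated; the correct statement is $c$-equivalence for any $c>2/\delta$ (the $1$-strong Schur property of $X$, which the paper gets by adapting Kalton--Spurn\'y), and with $\delta>1/2$ this gives $c=4$, which is still enough for the contradiction — but you should not assert near-isometry. Second, your (b) argument produces a $c_0$-equivalent block sequence; that is fine for (b) in isolation, but it is a symptom of the one-level design whose incompatibility with (a) is described above.
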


\begin{proof}
To find an appropriate mapping $ \mathfrak{S} $, we apply a construction provided in \cite[\S 1(a)]{AD} and a simple idea from \cite[Remark 3.10(vii)]{ku}. Let us recall first that, for $ \mathcal{A} \in K(\mathcal{P}(\mathbb{N})) $ and $ 0 < \theta < 1 $, a Tsirelson type space $ T[\mathcal{A}, \theta] $ is defined as the completion of $ c_{00} $ under the implicitly defined norm
$$ \Vert x \Vert_{\mathcal{A}, \theta} = \max \bigg\{ \Vert x \Vert_{\infty}, \theta \sup \sum_{k=1}^{n} \Vert E_{k}x \Vert_{\mathcal{A}, \theta} \bigg\}, $$
where the \textquotedblleft sup\textquotedblright {} is taken over all finite families $ \{ E_{1}, \dots, E_{n} \} $ of finite subsets of $ \mathbb{N} $ such that
$$ \exists A \in \mathcal{A} \; \exists m_{1}, \dots, m_{n} \in A : \; m_{1} \leq E_{1} < m_{2} \leq E_{2} < \dots < m_{n} \leq E_{n}. $$
Given $ \varepsilon > 0 $, we put $ \theta = e^{-\varepsilon} $ and
$$ X_{\mathcal{A}} = T[\mathcal{A}_{1}, \theta] \oplus_{1} X, \quad \mathcal{A} \in K(\mathcal{P}(\mathbb{N})), $$
where $ \mathcal{A}_{1} = \{ A \cup \{ 1 \} : A \in \mathcal{A} \} $. We check that $ X_{\mathcal{A}} $ satisfies the requirements (a) and (b) on $ \mathfrak{S}(\mathcal{A}) $.

(a) We observe that there is a sequence of finite-dimensional spaces whose $ \ell_{1} $-sum has the Banach-Mazur distance to $ T[\mathcal{A}_{1}, \theta] $ at most $ \varepsilon $. Since $ \mathcal{A} $ contains an infinite set, we can put $ m_{1} = 1 $ and find numbers $ 1 < m_{2} < m_{3} < \dots $ such that $ \{ m_{1}, m_{2}, m_{3}, \dots \} \in \mathcal{A}_{1} $. Considering $ E_{k} = \{ m_{k}, \dots, m_{k+1} - 1 \} $ for every $ k \in \mathbb{N} $, we obtain
$$ e^{-\varepsilon} \sum_{k=1}^{\infty} \Vert E_{k}x \Vert_{\mathcal{A}_{1}, \theta} \leq \Vert x \Vert_{\mathcal{A}_{1}, \theta} \leq \sum_{k=1}^{\infty} \Vert E_{k}x \Vert_{\mathcal{A}_{1}, \theta}, \quad x \in c_{00}, $$
(the first inequality holds due to the definition of $ \Vert \cdot \Vert_{\mathcal{A}_{1}, \theta} $ and the choice $ \theta = e^{-\varepsilon} $, the second one is just the triangle inequality). So, the sequence $ \mathrm{span} \{ e_{n} : n \in E_{k} \}, k = 1, 2, \dots, $ works.

It follows that the same holds for $ X_{\mathcal{A}} $ and that an appropriate sequence of finite-dimensional spaces can be chosen to be dense. Indeed, we can collect all spaces $ \mathrm{span} \{ e_{n} : n \in E_{k} \} $ with all $ G_{n} $'s. Thus, to show that $ \rho_{BM}(X, X_{\mathcal{A}}) \leq \varepsilon $, it is sufficient to realize that $ \rho_{BM}(X, Y) = 0 $ for
$$ Y = \Big( \bigoplus H_{n} \Big)_{\ell_{1}}, $$
where $ H_{1}, H_{2}, \dots $ is another dense sequence of finite-dimensional spaces.

Given $ \delta > 0 $, we obtain by a back-and-forth argument that there are a bijection $ \pi : \mathbb{N} \to \mathbb{N} $ and linear surjective mappings $ L_{n} : G_{n} \to H_{\pi(n)} $ such that $ \Vert L_{n} \Vert \leq 1 $ and $ \Vert L_{n}^{-1} \Vert \leq e^{\delta} $. Then the operator
$$ L : X \to Y, \quad \sum_{n=1}^{\infty} x_{n} \mapsto \sum_{n=1}^{\infty} L_{n} x_{n}, \quad (x_{n} \in G_{n}), $$
satisfies $ \Vert L \Vert \leq 1 $ and its inverse
$$ L^{-1} : Y \to X, \quad \sum_{n=1}^{\infty} y_{n} \mapsto \sum_{n=1}^{\infty} L_{n}^{-1} y_{n}, \quad (y_{n} \in H_{\pi(n)}), $$
satisfies $ \Vert L^{-1} \Vert \leq e^{\delta} $. Hence, $ \rho_{BM}(X, Y) \leq \delta $. As $ \delta > 0 $ could be arbitrary, we arrive at $ \rho_{BM}(X, Y) = 0 $.

Finally, using \cite[Proposition~6.2]{o94} (or \cite[Proposition~2.1]{DuKa}), we arrive at $ \rho_{K}(X_{\mathcal{A}}, X) \leq \rho_{BM}(X_{\mathcal{A}}, X) \leq \varepsilon $.

(b) Since $ \mathcal{A} $ consists of finite sets, the canonical basis $ e_{n} = \mathbf{1}_{\{ n \}} $ of $ c_{00} $ is a shrinking basis of $ T[\mathcal{A}_{1}, \theta] $ (to show this, it is possible to adapt the part (a) of the proof of \cite[Proposition~1.1]{AD} if we consider $ \theta_{k} = \theta $ and $ \mathcal{M}_{k} = \mathcal{A}_{1} $ for every $ k $). As $ T[\mathcal{A}_{1}, \theta] \subseteq X_{\mathcal{A}} $, we obtain that $ e_{1}, e_{2}, \dots $ is a shrinking basic sequence as desired. In order to get a contradiction, let us assume that $ \rho_{K}(X, Y) < 1/8 $ where $ Y = X_{\mathcal{A}} $.

Let us choose some $ \eta $ with $ \rho_{K}(X, Y) < \eta < 1/8 $. Let $ \iota_{X} $ and $ \iota_{Y} $ be linear isometric embeddings of $ X $ and $ Y $ into a Banach space $ Z $ such that $ \rho_{H}^{Z}(\iota_{X}(B_{X}), \iota_{Y}(B_{Y})) < \eta $. Let $ x'_{1}, x'_{2}, \dots $ be points in $ B_{X} $ such that
$$ \Vert \iota_{X}(x'_{n}) - \iota_{Y}(e_{n}) \Vert_{Z} < \eta, \quad n \in \mathbb{N}. $$
It is straightforward to check that the sequence $ x_{n} = x'_{n}/\Vert x'_{n} \Vert_{X} $ fulfills
$$ \Vert \iota_{X}(x_{n}) - \iota_{Y}(e_{n}) \Vert_{Z} < 2\eta, \quad n \in \mathbb{N}. $$
This sequence is $ (1-4\eta) $-separated, as $ \Vert x_{n} - x_{m} \Vert_{X} > \Vert e_{n} - e_{m} \Vert_{\mathcal{A}_{1}, \theta} - 4\eta \geq 1 - 4\eta $ for $ n \neq m $.

We employ the fact that the space $ X $ has the $ 1 $-strong Schur property (by \cite[p. 57]{gkl}, a space is said to have the \emph{$ 1 $-strong Schur property} if, for any $ \delta \in (0, 2] $, any $ c > 2/\delta $ and any normalized $ \delta $-separated sequence, there is a subsequence which is $ c $-equivalent to the standard basis of $ \ell_{1} $). This fact follows for instance from \cite[Proposition~4.1]{KalSpu} and the observation that the proof of \cite[Theorem~1.3]{KalSpu} works for $ X $.

We obtain that $ x_{1}, x_{2}, \dots $ has a subsequence $ x_{n_{k}} $ which is $ 4 $-equivalent to the standard basis of $ \ell_{1} $. It follows that
$$ \Big\Vert \sum_{k=1}^{l} \lambda_{k} e_{n_{k}} \Big\Vert_{\mathcal{A}_{1}, \theta} \geq \Big\Vert \sum_{k=1}^{l} \lambda_{k} x_{n_{k}} \Big\Vert_{X} - \sum_{k=1}^{l} |\lambda_{k}| \cdot 2\eta \geq \Big( \frac{1}{4} - 2\eta \Big) \sum_{k=1}^{l} |\lambda_{k}| $$
for every $ l \in \mathbb{N} $ and $ \lambda_{1}, \dots, \lambda_{l} \in \mathbb{R} $. Therefore, $ e_{1}, e_{2}, \dots $ has a subsequence equivalent to the standard basis of $ \ell_{1} $. This is a contradiction, as $ e_{1}, e_{2}, \dots $ is a shrinking basic sequence at the same time.

So, (a) and (b) are proven for $ X_{\mathcal{A}} $, and it remains to show that there is a Borel mapping $ \mathfrak{S} : K(\mathcal{P}(\mathbb{N})) \to \Banach $ such that $ \mathfrak{S}(\mathcal{A}) $ is isometric to $ X_{\mathcal{A}} $ for every $ \mathcal{A} \in K(\mathcal{P}(\mathbb{N})) $. Let $ x_{1}, x_{2}, \dots $ be a sequence of linearly independent vectors in $ X $ whose linear span is dense in $ X $. For every $ \mathcal{A} \in K(\mathcal{P}(\mathbb{N})) $, we define the norm on $ V $ given by
$$ \big\Vert (q_{j})_{j=1}^{\infty} \big\Vert = \Big\Vert \sum_{k=1}^{\infty} q_{2k-1} e_{k} \Big\Vert_{\mathcal{A}_{1}, \theta} + \Big\Vert \sum_{k=1}^{\infty} q_{2k} x_{k} \Big\Vert_{X}. $$
In this way, the coded space is isometric to $ X_{\mathcal{A}} $.

Thus, we need just to check that the defined mapping is Borel, i.e., that the function
$$ \mathcal{A} \in K(\mathcal{P}(\mathbb{N})) \quad \mapsto \quad \Big\Vert \sum_{k=1}^{\infty} q_{2k-1} e_{k} \Big\Vert_{\mathcal{A}_{1}, \theta} + \Big\Vert \sum_{k=1}^{\infty} q_{2k} x_{k} \Big\Vert_{X} $$
is Borel for a fixed $ (q_{j})_{j=1}^{\infty} \in V $. If we pick $ l \in \mathbb{N} $ such that $ q_{j} = 0 $ for every $ j > l $, then it is not difficult to show that the value of the function depends only on $ \{ A \cap \{ 1, \dots, l \} : A \in \mathcal{A} \} $ (an analogous statement in the dual setting was discussed in \cite{ku}, see \cite[Fact~3.4]{ku}). For this reason, $ K(\mathcal{P}(\mathbb{N})) $ can be decomposed into finitely many clopen sets on which the function is constant.
\end{proof}

It is now quite easy to prove that the distances to which the Kadets distance is reducible are not Borel. We use the following classical result that can be found e.g. in \cite[(27.4)]{Ke}.

\begin{thm}[Hurewicz] \label{thmhur}
The set
$$ \mathfrak{H} = \Big\{ \mathcal{A} \in K(\mathcal{P}(\mathbb{N})) : \textnormal{$ \mathcal{A} $ contains an infinite set} \Big\} $$
is a complete analytic subset of $ K(\mathcal{P}(\mathbb{N})) $. In particular, it is not Borel.
\end{thm}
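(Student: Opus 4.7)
The plan is to establish analyticity by an easy projection argument, then prove completeness by reducing the canonical complete analytic set of ill-founded trees on $\mathbb{N}$ to $\mathfrak{H}$ via a continuous coding of branches as infinite subsets of $\mathbb{N}$.

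First I would show $\mathfrak{H}$ is analytic. The set $I = \{A \in \mathcal{P}(\mathbb{N}) : A \text{ is infinite}\}$ is the $G_\delta$ set $\bigcap_n \bigcup_{m>n} \{A : m \in A\}$, and the membership relation $\{(A, \mathcal{A}) \in \mathcal{P}(\mathbb{N}) \times K(\mathcal{P}(\mathbb{N})) : A \in \mathcal{A}\}$ is closed (by the definition of the Vietoris topology). Hence $\mathfrak{H}$ is the projection onto the second coordinate of the Borel set $\{(A, \mathcal{A}) : A \in I \text{ and } A \in \mathcal{A}\}$, and so it is analytic.

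For completeness I would reduce the set $\mathrm{IF} = \{T \in \mathrm{Tr}(\mathbb{N}) : [T] \neq \emptyset\}$ of ill-founded trees on $\mathbb{N}$ (a standard complete analytic set; see e.g.\ \cite[\S 27]{Ke}) to $\mathfrak{H}$. Fix a bijection $\psi : \mathbb{N}^{<\omega} \to \mathbb{N}$ and set, for any finite or infinite sequence $s$,
\[
    A(s) = \psi\big(\{s \upharpoonright k : 0 \leq k \leq |s|\}\big) \subseteq \mathbb{N},
\]
so $A(s)$ is finite if $s \in \mathbb{N}^{<\omega}$ and infinite if $s \in \mathbb{N}^{\mathbb{N}}$. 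To each tree $T \in \mathrm{Tr}(\mathbb{N})$ I associate
\[
    \mathcal{A}_T = \{A(s) : s \in T\} \cup \{A(x) : x \in [T]\} \subseteq \mathcal{P}(\mathbb{N}).
\]
Clearly $T \in \mathrm{IF}$ iff $\mathcal{A}_T$ contains an infinite set iff $\mathcal{A}_T \in \mathfrak{H}$. It remains to check that $\mathcal{A}_T$ is compact and that the map $T \mapsto \mathcal{A}_T$ is Borel (in fact continuous), which will complete the reduction.

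The main obstacle is verifying these two last properties; the analysis hinges on the following observation. If $s_n \in T \cup [T]$ and $A(s_n) \to B$ in $2^{\mathbb{N}}$, then setting $P = \{t \in \mathbb{N}^{<\omega} : t \sqsubseteq s_n \text{ for all sufficiently large } n\}$ yields a prefix-closed, totally ordered subset of $T$, and $B = \psi(P)$. Since $P$ is either a finite chain (hence $P = \{t : t \sqsubseteq s\}$ for its maximum $s \in T$, giving $B = A(s)$) or an infinite chain determining some $x \in [T]$ (giving $B = A(x)$), closedness of $\mathcal{A}_T$ follows; compactness is automatic since $2^{\mathbb{N}}$ is compact. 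For continuity of $T \mapsto \mathcal{A}_T$ one uses that $T_n \to T$ in $\mathrm{Tr}(\mathbb{N})$ means eventual pointwise agreement on every finite sequence, and combines this with the preceding branch-analysis to verify both semicontinuity conditions for the Vietoris topology. Invoking the reduction together with the classical fact that $\mathrm{IF}$ is complete analytic then yields that $\mathfrak{H}$ is complete analytic, and in particular not Borel.
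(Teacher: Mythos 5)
The paper does not prove this statement at all --- it is quoted as a classical result with a reference to \cite[(27.4)]{Ke} --- so there is no in-paper proof to compare against. Your argument is correct and is essentially the standard proof of Hurewicz's theorem: analyticity by projecting the closed membership relation over the $G_\delta$ set of infinite subsets, and completeness by reducing the ill-founded trees via the compact set of codes of all chains of initial segments of nodes and branches of $T$. The one place you only sketch, the Vietoris continuity of $T\mapsto\mathcal{A}_T$, does go through by the observation you state (the stable-prefix set $P$ of a convergent sequence $A(s_n)$ with $s_n\in T_n\cup[T_n]$ lies in $\lim T_n$, giving the upper condition via compactness of $2^{\mathbb{N}}$, while the lower condition follows since $A(x\upharpoonright k)\to A(x)$); only Borelness is needed for the reduction anyway, and the degenerate case of the empty tree is harmless.
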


\begin{proof}[Proof of Theorem~\ref{thm:distancenotBorel}]
Let $ f : \Banach \to P $ be a Borel uniform embedding. We can find $ \eta > 0 $ and $ \varepsilon > 0 $ such that
$$ \rho(f(Y), f(Z)) < \eta \Rightarrow \rho_{K}(Y, Z) < \frac{1}{8} $$
and
$$ \rho_{K}(Y, Z) \leq \varepsilon \Rightarrow \rho(f(Y), f(Z)) < \eta $$
for all $ Y, Z \in \Banach $. Let $ X $ be as in Proposition~\ref{propnonBoreldist} and let $ \mathfrak{S} $ be a mapping provided for $ \varepsilon $. We claim that the set
$$ \Phi = \{ p \in P : \rho(p, f(X)) < \eta \} $$
is not Borel, and thus that the function $ \rho(f(X), \cdot) $ is not Borel.

Due to Theorem~\ref{thmhur}, it is sufficient to realize that $ (f \circ \mathfrak{S})^{-1}(\Phi) = \mathfrak{H} $, i.e.,
$$ \mathcal{A} \in \mathfrak{H} \quad \Leftrightarrow \quad f(\mathfrak{S}(\mathcal{A})) \in \Phi. $$
If $ \mathcal{A} \in K(\mathcal{P}(\mathbb{N})) $ contains an infinite set, then $ \rho_{K}(\mathfrak{S}(\mathcal{A}), X) \leq \varepsilon $, and so $ \rho(f(\mathfrak{S}(\mathcal{A})), f(X)) < \eta $. If $ \mathcal{A} \in K(\mathcal{P}(\mathbb{N})) $ consists of finite sets only, then $ \rho_{K}(\mathfrak{S}(\mathcal{A}), X) \geq \frac{1}{8} $, and so $ \rho(f(\mathfrak{S}(\mathcal{A})), f(X)) \geq \eta $.
\end{proof}
The following corollary then immediately follows from Theorem~\ref{thm:intro1}. In particular, it answers in negative Question 8.4 from \cite{BYDNT}.
\begin{cor}
Let $\rho$ be any pseudometric from the following list: $\rho_{GH}$, $\rho_{GH}\upharpoonright \Met_p$, $\rho_{GH}\upharpoonright \Met_p^q$, $\rho_{GH}^\Banach$, $\rho_K$, $\rho_L$, $\rho_N$, $\rho_U$, $\rho_{BM}$. Then there exists $A$ from the domain of $\rho$ so that the function $\rho(A,\cdot)$ is not Borel. 
\end{cor}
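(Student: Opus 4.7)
The plan is to reduce the corollary to the immediately preceding theorem, which asserts that whenever $\rho_K$ on $\Banach$ is Borel-uniformly continuous reducible to an analytic pseudometric $\rho$, then some ball $\rho(A,\cdot)$ fails to be Borel. Consequently, for each $\rho$ listed in the statement, all I need to verify is that $\rho_K \leq_{B,u} \rho$, after which the desired non-Borel element $A$ is provided for free.

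For the Banach-space distances $\rho_K$, $\rho_{GH}^{\Banach}$, $\rho_{BM}$, $\rho_L^{\Banach}$, $\rho_N$, $\rho_U$, the reduction will be produced by composing two ingredients. First, Theorem~\ref{thmGHtoK} furnishes an injective Borel-uniformly continuous reduction from $\rho_{GH}$ on $\Met_{1/2}^1$ into each of these six distances simultaneously. Second, Theorem~\ref{thm:KadetsToGH} gives a Borel-uniformly continuous reduction from $\rho_K$ on $\Banach$ to $\rho_{GH}$ on $\Met$, and Corollary~\ref{cor:ReductionGHMettoMet^q_p} (for the choice $p = 1/2$, $q = 1$) reduces $\rho_{GH}$ on $\Met$ to $\rho_{GH}$ on $\Met_{1/2}^1$. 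Composing these Borel-uniformly continuous reductions yields $\rho_K \leq_{B,u} \rho$ for each of the six cases. (In the case $\rho = \rho_K$ itself, of course the identity on $\Banach$ already works, trivially.)

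For the Gromov-Hausdorff pseudometrics on various classes of metric spaces---namely $\rho_{GH}$ on $\Met$, on $\Met_p$, and on $\Met_p^q$---Theorem~\ref{thm:KadetsToGH} directly supplies $\rho_K \leq_{B,u} \rho_{GH}$ on $\Met$. To land in the bounded-below subspace $\Met_p$, I compose with the reduction from $\rho_{GH}$ on $\Met$ to $\rho_{GH}$ on $\Met_p$ provided by Theorem~\ref{thm:ReductionGHMettoMet_1}. To land in $\Met_p^q$, I instead compose with the reduction of Corollary~\ref{cor:ReductionGHMettoMet^q_p}. In each case the composition is Borel-uniformly continuous because Borel-uniformly continuous reductions are closed under composition (which is immediate from the definition).

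Since a Borel-uniformly continuous reduction $\rho_K \leq_{B,u} \rho$ has been exhibited for each pseudometric in the list, applying the preceding theorem to each such $\rho$ produces an element $A$ in its domain for which $\rho(A,\cdot)$ is not Borel. There is no real obstacle here; the entire content of the corollary is an organized bookkeeping of the reducibility results of Section~\ref{sectionReduction}, and the only mild subtlety is to keep track of which target space each reduction lands in so that the composition chain is well defined.
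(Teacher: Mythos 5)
Your proposal is correct and takes essentially the same route as the paper, which derives the corollary by combining the preceding theorem with the reducibility results of Section~\ref{sectionReduction} to obtain $\rho_K\leq_{B,u}\rho$ for each listed $\rho$. The only listed item your case analysis does not explicitly cover is $\rho_L$ on $\Met$ (you treat $\rho_L^{\Banach}$); that case is closed by Theorem~\ref{thm:GhbiReducibleWithLipUniDiscrete}, or equivalently by composing $\rho_K\leq_{B,u}\rho_{GH}\upharpoonright\Met_2^4\leq_{B,u}\rho_L\upharpoonright\Met_2^4\leq_{B,u}\rho_L$ via Corollary~\ref{cor:ReductionGHMettoMet^q_p} and Theorem~\ref{thm:ReductionGHboundedToLip}.
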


\section{Concluding remarks and open problems}\label{section:problems}
Regarding the notion of Borel-uniformly continuous reducibility, here and in the complementary article \cite{CDKpart2} we have focused mainly on the positive results. Any rich theory should however contain also the negative ones. Within the standard theory of definable equivalence relations, it is often the negative results, results demonstrating that some equivalence relations are not reducible to some other ones, that form the most interesting and challenging part of the theory. The Kechris and Louveau's result of non-reducibility of $E_1$ (\cite{KeLo97}), which we generalized in this paper, is an example. Hjorth's theory of turbulence is another main example, see \cite{Hjo}. For some pseudometrics it is clear that they do not reduce to each other for trivial reasons, e.g. the Gromov-Hausdorff distance to the cut distance on graphons defined in the section on orbit pseudometrics, as one is analytic non-Borel, while the other is Borel. Some more interesting non-reducibility results would be welcome.
\begin{problem}
Find some `natural pseudometric from functional analysis or metric geometry' that is not bi-reducible with the Gromov-Hausdorff distance.
\end{problem}

Note that, by our results, it would be sufficient to find such natural pseudometric which is not Borel-uniformly continuous reducible to a CTR orbit pseudometric.

It follows from the result of Zielinski in \cite{Zie}, that the homeomorphism relation on compact metrizable spaces is bi-reducible with the universal orbit equivalence relation, and from the results of Amir (\cite{Amir}) and of Dutrieux and Kalton (\cite{DuKa}) that the equivalences $E_{\rho_{GH}}$ and $E_{\rho_{BM}}$ are above the universal orbit equivalence in the sense of Borel reducibility. More thorough discussion about this fact is in \cite[Remark 8.5]{BYDNT}. By our results from the complementary paper \cite{CDKpart2} this is also true for $E_\rho$, where $\rho$ is any pseudometric from the set $\{\rho_{GH},\rho_{GH}\upharpoonright \Met_p, \rho_{GH}\upharpoonright \Met^q, \rho_{GH}\upharpoonright \Met_p^q, \rho_{GH}^\Banach,\rho_K,\rho_{HL},\rho_N, \rho_L, \rho_L\upharpoonright \Met_p^q, \rho_U, \rho_{BM}\}$. By our further results, all these equivalences $E_\rho$ (except $E_{\rho_U}$ for which we do not know the answer) have Borel classes. This suggests the main open question, already stated in \cite{BYDNT} for $\rho_{GH}$ and $\rho_K$.
\begin{question}
Are the equivalence relations $E_\rho$, where $\rho$ is from the list above, Borel reducible to an orbit equivalence relation?
\end{question}
If the answer is negative, these relations would form an interesting rather unexplored class of analytic equivalence relations with Borel classes. Moreover, it would shed light on some long standing conjectures about the Borel equivalence relation $E_1$, as this equivalence is not Borel reducible to $E_\rho$, where $\rho$ is as above. We refer the reader to Section \ref{section:orbitPseudometrics}, where we discuss these issues related to $E_1$.

Note however that even if the answer were positive, our particular reducibility results would still be of interest, as we work with a quantitative notion of reducibility (the Borel-uniformly continuous reducibility) that is stronger than the standard Borel reducibility.

\medskip
Clemens, Gao and Kechris prove in \cite{CleGaoKe} (see also \cite{GaoKe}) that the isometry relation on the Effros-Borel space $F(\mathbb{U})$, where $\mathbb{U}$ is the Urysohn universal metric space, is Borel bi-reducible with the orbit equivalence relation induced by the canonical action of $\mathrm{Iso}(\mathbb{U})$ on $F(\mathbb{U})$. One may ask if there is a continuous version of this result.
\begin{question}
Let $d$ be the Hausdorff distance on $F(\mathbb{U})$, $\mathrm{Iso}(\mathbb{U})\curvearrowright F(\mathbb{U})$ the canonical action and $\rho$ the corresponding orbit pseudometric (see Section \ref{section:pseudometrics} for a definition). Is $\rho_{GH}$ Borel-uniformly continuous bi-reducible with $\rho$? Or does at least one of the inequalities $\rho\leq_{B,u} \rho_{GH}$, $\rho_{GH}\leq_{B,u} \rho$ hold?
\end{question}

We have proved that there exists a universal analytic pseudometric $\rho$ with respect to Borel-uniformly continuous reducibility (even Borel-isometric reducibility) in Theorem \ref{thm:universal_pseudometric}. The corresponding equivalence relation is clearly the complete analytic equivalence relation. Several natural equivalence relations have been shown to be bireducible with the complete analytic one, e.g. the bi-Lipschitz homeomorphism of Polish metric spaces, or linear isomorphism of separable Banach spaces, see \cite{FLR}.
\begin{problem}
Is there a natural pseudometric that is bi-reducible with the universal analytic pseudometric $\rho$?
\end{problem}

\appendix

\section{Borelness of equivalence classes}\label{sectionGames}
The following surprising result was proved in \cite{BYDNT} using the methods of infinitary continuous logic.

\begin{thm}[see Corollary 8.3 and Theorem 8.9 in \cite{BYDNT}]\label{thm:fromBYDNT}
The equivalence classes of $E_{\rho_{GH}}$ and $E_{\rho_K}$ are Borel.
\end{thm}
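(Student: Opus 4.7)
The plan is to combine an ordinal approximation scheme for $\rho_{GH}$ and $\rho_K$ by Borel pseudometrics with a Scott-type rank argument that isolates, for each $M$ (resp.\ $X$), a countable ordinal at which the equivalence class stabilizes. Concretely, for each countable ordinal $\alpha$ I would build a Borel pseudometric $\rho_{GH}^\alpha$ on $\Met$ (and similarly $\rho_K^\alpha$ on $\Banach$) obtained by iterating a back-and-forth ``approximate isomorphism'' quantifier $\alpha$ times, so that $\rho_{GH}^0$ measures one-sided approximate matching of finite tuples and the transfinite recursion $\rho_{GH}^{\alpha+1}$ requires that such matchings extend, and $\rho_{GH}^{\lambda}$ takes the supremum over $\alpha<\lambda$ at limits. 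By Fact~\ref{fact:GHbyCorrespondences}, Lemma~\ref{lem:GHequivalenceUniformlyDiscrete} and the metric back-and-forth arguments from the proof of Lemma~\ref{lem:GHPerfectSpaces}, one verifies $\rho_{GH}(M,N) = 0 \iff \forall\alpha<\omega_1:\ \rho_{GH}^\alpha(M,N) = 0$, and analogously for $\rho_K$ using Lemma~\ref{lem:KadetsDescription} in place of Fact~\ref{fact:GHbyCorrespondences}.

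Next I would set up a metric game $\game_\alpha(M,N)$ whose value equals $\rho_{GH}^\alpha(M,N)$: Player I plays a finite tuple in one space, Player II responds in the other, and after $\alpha$ alternations the payoff compares distance matrices. This game-theoretic reformulation both makes $\rho_{GH}^\alpha$ manifestly Borel (the clopen payoff unfolds as a bounded $\Sigma^0_{1+\alpha}$ condition on the codes in $\Met$), and supplies the monotonicity $\rho_{GH}^\alpha \leq \rho_{GH}^\beta$ for $\alpha\leq\beta$. The Kadets case runs in parallel, with the game played on $\Rat$-homogeneous finite subsets of $E=\Rat$-span of a fixed dense sequence and the payoff given by the inequality in Lemma~\ref{lem:KadetsDescription}.

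The core Scott-rank step is to show that for each fixed $M\in\Met$ there exists a countable ordinal $\alpha_M$ such that $\rho_{GH}^{\alpha_M}(M,\cdot) = 0$ already implies $\rho_{GH}^{\beta}(M,\cdot) = 0$ for every $\beta<\omega_1$, and hence $\rho_{GH}(M,\cdot) = 0$. Since $M$ is separable and its automorphism-type structure is coded on a countable set, a boundedness/reflection argument forces the descending sequence of ``types of $n$-tuples in $M$ at level $\alpha$'' to stabilize below some countable $\alpha_M$. Once this is established, the equivalence class of $M$ coincides with $\{N\setsep \rho_{GH}^{\alpha_M}(M,N)=0\}$, which is a Borel slice of a Borel set and hence Borel. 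The same blueprint applies to $\rho_K$, producing a countable $\alpha_X$ for every $X\in\Banach$.

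The main obstacle is the rank-stabilization step: showing that $\alpha_M$ is countable and that the pointwise stabilization suffices for the full pseudometric to vanish rather than only the approximating one. Unlike the discrete first-order setting, one cannot appeal to the standard Lopez-Escobar / Scott analysis directly because the approximations are quantitative, not Boolean; the argument must control, uniformly in $\varepsilon$, the dependence of the required ordinal on the precision. One way to handle this is to run the Scott analysis in the infinitary continuous logic $L_{\omega_1,\omega}$ of metric structures (as in \cite{BYDNT}), where the compactness of the space of $n$-types for each fixed radius gives the required uniform bound. Once that uniform bound is secured, the rest of the argument — Borelness of each level and passage to the limit — is a routine verification.
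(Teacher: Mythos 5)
Your proposal is correct in outline and follows essentially the same route as the paper: Section \ref{sectionGames} develops exactly this metric Ehrenfeucht--Fra\"iss\'e game hierarchy with ordinal parameters, a Scott rank $\alpha_d$, and a Borel set $I_d$ (a game-theoretic Scott sentence, which adds the homogeneity clause that $\alpha_d$-equivalent tuples remain $(\alpha_d+1)$-equivalent at the cost of a controlled loss in $\varepsilon$) whose membership characterizes $\rho(d,\cdot)=0$. The rank-stabilization step you flag as the main obstacle is handled there by Lemmas \ref{lem:tranzitivniHra} and \ref{lem:ScottRank} together with the iterative $\varepsilon$-degradation in the proof of Theorem \ref{thm:LipEquivalenceBorel}, rather than by compactness of type spaces, but the architecture is the same as yours and as the Scott analysis of \cite{BYDNT} that the theorem cites.
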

It shows that these equivalences are somehow close to orbit equivalence relations (indeed, it is still open whether they are bireducible with the universal orbit equivalence): although they are analytic, their equivalence classes are Borel.

It is not clear how to use these methods for other distances though. The aim of this section is therefore to develop a complementary approach to that one from \cite{BYDNT} using games, which also provides alternative means how to prove Theorem \ref{thm:fromBYDNT}.
For the reader who is familiar with techniques of model theory we mention that the games we use here can be viewed as generalizations of the Ehrenfeucht-Fra\" iss\' e game to the metric setting. Recall that if $\mathcal{C}$ is some class of countable structures, then for $M,N\in\mathcal{C}$ we have $M\cong N$ if and only if Player II has a winning strategy in an appropriate version of Ehrenfeucht-Fra\" iss\' e game for $M$ and $N$ of length $\omega$. Analogously, for each countable ordinal $\alpha$ one can define a variant of the Ehrenfeucht-Fra\" iss\' e game played with $M$ and $N$ with parameter $\alpha$ such that Player II has a winning strategy there if and only if $M\equiv_\alpha N$, where $\equiv_\alpha$ is a certain Borel approximation of $\cong$.

\medskip

Let $\mathcal{X}$ be a standard Borel space and $X$ a countable set. A typical example of $\mathcal{X}$ is a class of separable metric structures (e.g. Polish metric spaces or separable Banach spaces) that can be described as completion of some countable metric structures with a countable set $X$ as an underlying set (more precisely, we assume in this case that $\mathcal{X}$ can be identified with some Borel subset of $\Rea^{X\times X}$).

Denote by $\mathcal{C}$ the Polish space of all correspondences $\Corr\subseteq X\times X$, which is a $G_\delta$ subset of the Polish space $\mathcal{P}(X\times X)$. Let $f:\mathcal{C}\times \mathcal{X}\times \mathcal{X} \rightarrow [0,\infty]$ be a Borel function. 

Suppose that the function $\rho:\mathcal{X}\times \mathcal{X}\rightarrow [0,\infty]$, defined as $$\rho(d,p)=\inf_\Corr f(\Corr,d,p)$$ is a pseudometric on $\mathcal{X}$. For every two finite sets $E,F\subseteq X$ and $\Corr\subseteq X\times X$ let $\Corr^{E,F}=\{(x,y)\setsep x\Corr y,x\in E,y\in F\}$. By $\mathcal{C}^{E,F}$ we shall denote the finite set $\{\Corr^{E,F}\setsep \Corr\in\mathcal{C}\}$. Suppose that for every pair of finite sets $E,F\subseteq X$ there are functions $f^{E,F}: \mathcal{C}^{E,F}\times\mathcal{X}\times\mathcal{X}\rightarrow [0,\infty]$ satisfying the following five conditions.

\begin{enumerate}[leftmargin=0.5cm,itemindent=.5cm,start=1,label={(\arabic*)}]
\item {\it Monotonicity with respect to inclusion}, that is, 
	\[f^{E,F}(\Corr^{E,F},d,p)\leq f^{E',F'}((\Corr')^{E',F'},d,p)\]
    whenever finite sets $E,E',F,F'$ are such that $E\subseteq E'$ and $F\subseteq F'$, and $\Corr\subseteq \Corr'$.
\smallskip
\item {\it Continuity in upward unions}, that is $$f(\Corr,d,p)=\sup_{E,F} f^{E,F}(\Corr^{E,F},d,p)=\lim _{E,F} f^{E,F}(\Corr^{E,F},d,p),$$ where the limit is taken over pairs of finite sets that increase in inclusion and eventually cover $X$.
\smallskip
\item {\it Borelness}, that is, for every $E,F$ finite subsets of $X$, every $\Corr\in\mathcal{C}^{E,F}$ and every $d\in\mathcal{X}$, the mapping $\mathcal{X}\ni e\mapsto f^{E,F}(\Corr,d,e)$ is Borel.
\smallskip
\item {\it Symmetry}, that is $ f^{E,F}(\Corr^{E,F},d,p) = f^{F,E}((\Corr^{E,F})^{-1},p,d) $ for all finite subsets $E,F\subseteq X$, every $\Corr\in\mathcal{C}$ and $d,p\in \mathcal{X}$.
\smallskip
\item  {\it Transitivity}, that is, $$f^{E,G}((\Corr')^{F,G}\circ \Corr^{E,F},d,e)\leq f^{E,F}(\Corr^{E,F},d,p)+f^{F,G}((\Corr')^{F,G},p,e)$$ for all finite subsets $E,F,G\subseteq X$, $\Corr,\Corr'\in \mathcal{C}$ and $d,p,e\in\mathcal{X}$.
\end{enumerate}

\medskip

\noindent {\bf Examples}

\smallskip

\begin{enumerate}[leftmargin=0cm,itemindent=.5cm,start=1,label={\bfseries \arabic*. }]
\item {\bf Gromov-Hausdorff distance.} For a correspondence $\Corr\subseteq \Nat\times\Nat$ and two metrics $d,p\in\Met$ we set $$f(\Corr,d,p)=\sup\big\{|d(n,m)-p(n',m')|/2\setsep n\Corr n', m\Corr m'\big\}.$$ For finite sets $E,F\subseteq \Nat$, the function $f^{E,F}$ is defined analogously. Using Fact \ref{fact:GHbyCorrespondences} it is easy to check that the functions $f,f^{E,F}$ have the desired properties and $\rho_{GH}$ is defined using them.

\medskip

\item {\bf Kadets distance.} For a correspondence $\Corr\subseteq V\times V$ and two norms $\Norm_X,\Norm_Y\in\Banach$, we define
\[\begin{split}
f(\Corr,\Norm_X,\Norm_Y)=\sup\Bigg\{ & \frac{1}{n}\bigg| \Big\|\sum_{i\leq n} x_i\Big\|_X-\Big\|\sum_{i\leq n} y_i\Big\|_Y\bigg| \setsep\\
& \forall i\leq n : \; \big(x_i\Corr y_i \text{ or } x_i(-\Corr) y_i\big), \, \| x_i\|_X \leq 1, \, \| y_i\|_Y \leq 1 \Bigg\}
\end{split}\]
if $\|x\|_X\leq 1 \Leftrightarrow \|y\|_Y\leq 1$ for all $ x, y $ with $x\Corr y$; otherwise we set
$$f(\Corr,\Norm_X,\Norm_Y)=\infty.$$
Functions $f^{E,F}$ are defined analogously. It is not very difficult, even though tedious, to prove using \cite[Lemma 17]{CDKpart2} that $ \rho = \rho_{K}$ (we leave the details as an exercise). Then it is easy to check that functions $f$, $f^{E,F}$ have the desired properties.

\medskip

\item {\bf Banach-Mazur distance.} For a correspondence $\Corr\subseteq V\times V$ and two norms $\Norm_X,\Norm_Y\in\Banach$ we define 
$$f(\Corr,\Norm_X,\Norm_Y)=\infty$$ if $\Corr$ does not extend to a graph of a surjective linear isomorphism $T:X\to Y$; otherwise we set
$$f(\Corr,\Norm_X,\Norm_Y)=\log\|T\|+\log\|T^{-1}\|,$$ if $\Corr$ extends to a graph of such an operator $T$. Using \cite[Lemma 23]{CDKpart2}, we check that indeed $\rho_{BM}(\Norm_X,\Norm_Y)=\inf\{f(\Corr,\Norm_X,\Norm_Y)\setsep \Corr\text{ is a correspondence on }V\times V\}$. We define $f^{E,F}(\Corr^{E,F},\Norm_X,\Norm_Y)$ to be the number $\log \|T\| + \log \|T^{-1}\|$ where $T:\Span \operatorname{Dom}(\Corr^{E,F})\to \Span \operatorname{Rng}(\Corr^{E,F})$ is the unique linear operator whose graph extends the relation $\Corr^{E,F}$ provided it exists. Otherwise, we set $f^{E,F}(\Corr^{E,F},\Norm_X,\Norm_Y)=\infty$. It is easy to check that functions $f$, $f^{E,F}$ have the desired properties.
\end{enumerate}

\begin{remark}
Another example considered here may involve $ X = \mathbb{N} $ and \emph{a logic} action of $S_\infty$. That is, a canonical action of $S_\infty$ on a space of the form $\prod_{i\in I} 2^{\Nat^{n_i}}$. The interpretation is as follows. Let $L=\{R_i\setsep i\in I\}$ be a countable relational language, where each $R_i$ is a relational symbol of arity $n_i\in\Nat$. Each element $x\in X_L=\prod_{i\in I} 2^{\Nat^{n_i}}$ corresponds to a relational structure $A_x$ with domain $\Nat$ such that the tuple $k_1,\ldots,k_{n_i}\in\Nat$ satisfies the relation $R_i^{A_x}$ if and only if $x(i)(k_1,\ldots,k_{n_i})=1$. We refer the reader to \cite[Chapter 3.6]{gao} for details. Note there that logic actions form the canonical Borel $S_\infty$-spaces (\cite[Theorem 3.6.1]{gao}).

Suppose moreover that the language $L$ is finite and that $X_L$ is a \emph{topometric space} as defined by Ben Yaacov in \cite{BY}. That is, $X_L$ is equipped with a metric $d$ which refines the compact Polish topology of $X_L$ and which is lower semi-continuous with respect to this Polish topology. Suppose that the action of $ S_\infty$ is by isometries. One may then define
$$ f(\Corr, x, y) = \left\{\begin{array}{ll} d(\pi \cdot x, y), & \quad \Corr = \pi \in S_{\infty}, \\
\infty, & \quad \Corr \notin S_{\infty}. \\
\end{array} \right. $$
Then the corresponding function $ \rho $ is nothing but the orbit pseudometric $ \rho_{S_{\infty}, d} $, which is moreover CTR provided that $d$ is complete. The continuity of the action and especially the lower semi-continuity of $d$ allow to define $ f^{E, F}(\Corr^{E,F},x,y)$ naturally as $$\inf_{\Corr'\in S_{E,F}}\inf_{x'\in E_x, y'\in F_y} f(\Corr',x',y'),$$ where $S_{E,F}$ is the open set $\{\Corr'\in \mathcal{C}\setsep (\Corr')^{E,F}=\Corr^{E,F}\}$; $E_x$, resp. $F_y$ is the neighborhood of $x$, resp. of $y$ determined by $E$, $F$ respectively. These last two neighborhoods are open by our assumption that $L$ is finite. We do not know if these conditions suffice to prove that the functions $f^{E,F}$ are Borel, or further restrictions on $d$ are necessary.
\end{remark}

\medskip

Fix now $d,p\in\mathcal{X}$, two finite tuples $\bar x$ and $\bar y$ from $X$ of the same length (the first is supposed to be from $(X,d)$, the second from $(X,p)$), and some $\varepsilon>0$. Let $\game(d\bar x,p\bar y,\varepsilon)$ denote a game of two players I and II. At the $i$-th step Player I chooses either a point $m_i\in X$ that is supposed to extend the tuple $\bar x m_1\ldots m_{i-1}$, or a point $n_i\in X$, that is supposed to extend the tuple $\bar y n_1\ldots n_{i-1}$. In the former case, Player II responds by playing an element $n_i\in X$ extending the tuple $\bar y n_1\ldots n_{i-1}$, or in the latter case a point $m_i\in X$ extending the tuple $\bar x m_1\ldots m_{i-1}$. The game has countably many steps in which the players produce infinite sequences $(m_i)_i$ and $(n_i)_i$ which define a relation $\Corr$ where $x_j\Corr y_j$, for $j\leq|\bar x|$, and $m_i\Corr n_i$, for $i\in\Nat$. Note that this is not in general a bijection as the players are allowed to repeat the elements. As $\Corr\subseteq X\times X$, one may see $\Corr$ as a correspondence between certain subsets $X_1,X_2\subseteq X$. By $\Corr_i$, for $i\in\Nat$, we define the subset of $X\times X$ given by the tuples $\bar x m_1\ldots m_i$ and $\bar y n_1\ldots n_i$. Analogously, we use the notation $f^i$ for the function $f^{\bar x m_1\ldots m_i,\bar yn_1\ldots n_i}$. By $f^0$ we mean the function $f^{\bar x,\bar y}$ and by $\Corr_0$ the subset of $X\times X$ given by the tuples $\bar x$ and $\bar y$.

At the end Player II wins if and only if $f^i(\Corr_i,d,p)<\varepsilon$, for all $i\in\Nat$.

If the tuples $\bar x$, $\bar y$ are empty, we denote the game just by $\game(d,p,\varepsilon)$.
\begin{remark}
It may well happen that $d=p$. Since we still need to distinguish between these two copies during the game (as the role of $d$ and $p$ is clearly not symmetric), we say that Player I in her first move either chooses a point $m_1\in X$ which extends $\bar x$, or chooses a point $n_1\in X$ which extends $\bar y$. After $i-1$-many steps when the players have produced tuples $\bar x m_1\ldots m_{i-1}$, $\bar y n_1\ldots n_{i-1}$, Player I either chooses $m_i\in X$ extending $\bar x m_1\ldots m_{i-1}$, or chooses a point $n_i\in X$ extending $\bar y n_1\ldots n_{i-1}$.
\end{remark}
\begin{lemma}\label{lem:DistanceZeroEquivalenceGames}
For every $d,p\in \mathcal{X}$, we have $\rho(d,p)=0$ if and only if for every $\varepsilon>0$ Player II has a winning strategy in $\game(d,p,\varepsilon)$.
\end{lemma}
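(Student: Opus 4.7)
My plan is to prove the two directions separately, both relying on the interplay between condition (1) (monotonicity) and condition (2) (continuity in upward unions); the remaining conditions play no role here.

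For the forward direction, given $\varepsilon > 0$, I will fix a correspondence $\Corr$ with $f(\Corr,d,p) < \varepsilon$ (which exists since $\rho(d,p) = 0$) and have Player II play the natural ``follow $\Corr$'' strategy: respond to each move of Player I by any partner in $\Corr$. After $i$ moves, the produced relation satisfies $\Corr_i \subseteq \Corr \cap (E_i \times F_i) = \Corr^{E_i, F_i}$. A routine consequence of condition (1) — obtained by extending $\Corr_i$ to a correspondence $\widetilde{\Corr_i}$ on $X$ with $\widetilde{\Corr_i}^{E_i,F_i} = \Corr_i$ (possible because $X$ is infinite, placing the auxiliary pairings outside $E_i\times F_i$) and then enlarging it to $\widetilde{\Corr_i}\cup(\Corr^{E_i,F_i}\setminus\Corr_i)$ — is that $f^{E_i,F_i}(\cdot,d,p)$ is monotone in its relation argument, giving $f^i(\Corr_i,d,p)\le f^{E_i,F_i}(\Corr^{E_i,F_i},d,p)$. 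Condition (2) then bounds the right-hand side by $f(\Corr,d,p)<\varepsilon$, so Player II wins.

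For the backward direction, fix $\varepsilon > 0$ and a winning strategy $\sigma$ for Player II in $\game(d,p,\varepsilon)$. Fixing an enumeration $X = \{x_k\}_{k\in\en}$, I will have Player I play $x_k$ on the $d$-side at step $2k-1$ and on the $p$-side at step $2k$, with Player II responding via $\sigma$. Because Player I covers $X$ on both sides, the resulting relation $\Corr$ is a correspondence on $X$. For any finite $E,F\subseteq X$, the finite set $\Corr^{E,F}$ is contained in $\Corr_{i^*}$ for $i^*$ sufficiently large, and by taking $i^*$ even larger I may also arrange $E\subseteq E_{i^*}$ and $F\subseteq F_{i^*}$; for such $i^*$ one automatically has $\Corr_{i^*}\cap(E\times F) = \Corr^{E,F}$ (both inclusions being immediate from $\Corr^{E,F}\subseteq\Corr_{i^*}\subseteq\Corr$). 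I then extend $\Corr_{i^*}$ to a correspondence $\widetilde\Corr$ on $X$ by pairing $X\setminus E_{i^*}$ with $X\setminus F_{i^*}$ via some bijection, which adds no pairs inside $E_{i^*}\times F_{i^*}$; consequently $\widetilde\Corr^{E_{i^*},F_{i^*}} = \Corr_{i^*}$ and $\widetilde\Corr^{E,F} = \Corr^{E,F}$. Condition (1) applied to $\widetilde\Corr$ with $(E,F)\subseteq(E_{i^*},F_{i^*})$ yields $f^{E,F}(\Corr^{E,F},d,p)\le f^{i^*}(\Corr_{i^*},d,p)<\varepsilon$. Taking the supremum over finite $E,F$ and invoking condition (2), I conclude $f(\Corr,d,p)\le\varepsilon$, hence $\rho(d,p)\le\varepsilon$; letting $\varepsilon\downarrow 0$ finishes the proof.

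The main technical delicacy, present in both directions, is that the finite relation $\Corr_i$ produced after $i$ steps of the game need not coincide with the restriction $\Corr^{E_i,F_i}$ of the eventually-built correspondence, because later moves can contribute additional pairs lying in $E_i\times F_i$. The resolution is to choose the correspondence extensions of $X$ carefully — exploiting the infinitude of $X$ to keep all auxiliary pairings outside the current ``active'' finite rectangle — so that condition (1) delivers the inequality in the direction required by each argument.
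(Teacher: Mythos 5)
Your proof is correct and follows the same route as the paper's: Player II tracks a fixed correspondence witnessing $f(\Corr,d,p)<\varepsilon$, and conversely Player I exhausts $X$ on both sides so that the play produces a correspondence $\Corr$ with $f(\Corr,d,p)\le\varepsilon$ via conditions (1) and (2). The extra care you take in distinguishing the played relation $\Corr_i$ from the restriction $\Corr^{E_i,F_i}$, and in extending finite relations to genuine correspondences before invoking monotonicity, is precisely the detail the paper compresses into ``it is clear that this is a winning strategy'' and the displayed identity $f(\Corr,d,p)=\sup_i f^i(\Corr_i,d,p)$.
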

\begin{proof}
Fix $d$ and $p$ from $\mathcal{X}$. Suppose that $\rho(d,p)=0$. Fix some $\varepsilon>0$. By the assumption there exists a correspondence $\Corr\subseteq X\times X$ such that $f(\Corr,d,p)<\varepsilon$. Now Player II can use $\Corr$ as his strategy. That is, if Player I plays some $m_i\in X$, then Player II responds by playing some $n_i\in X$ such that $m_i\Corr n_i$; or vice versa. It is clear that this is a winning strategy.

Conversely, suppose that Player II has a winning strategy in $\game(d,p,\varepsilon)$ for every $\varepsilon>0$. Fix some $\varepsilon>0$. Player I can play so that $\bigcup_i \{m_i\}=\bigcup_i \{n_i\}=X$. At the end the players produce a correspondence $\Corr\subseteq X\times X$ and by our assumptions we get
\[
f(\Corr,d,p)=\sup_i f^i(\Corr_i,d,p)\leq\varepsilon,
\]
thus $\rho(d,p)\leq f(\Corr,d,p)\leq\varepsilon$.
\end{proof}

Let $\alpha<\omega_1$ be now a countable ordinal, $\varepsilon > 0$ and $\bar x$ and $\bar y$ tuples of the same length from $X$. By $\game(d\bar x,p\bar y,\varepsilon,\alpha)$ we denote a game which is similar in its rules to $\game(d\bar x,p\bar y,\varepsilon)$, however in the first step Player I moreover chooses an ordinal $\alpha_1<\alpha$. In the $i$-th step, Player I chooses moreover an ordinal $\alpha_i<\alpha_{i-1}<\ldots<\alpha_1<\alpha$. The length of each play is finite, where the last step is when Player I chooses $0$ as an ordinal.  Analogously as above, Player II wins if $f^k (\Corr_k,d,p)<\varepsilon$, where $k\in \Nat$ is such that $\alpha_k=0$, and $\Corr_k$ is the correspondence produced by the players at the end of the game. If $\alpha=0$, then the game is decided in the very beginning and we set that Player II wins if $f^0(\Corr_0,d,p)<\varepsilon$.

\medskip
Let $\varepsilon>0$, $\alpha$ be a countable ordinal, and $\bar x$ and $\bar y$ be tuples of the same length. For every $(X,d)\in \mathcal{X}$, denote by $E(d,\bar x,\bar y,\varepsilon,\alpha)$ the set of all $p\in \mathcal{X}$ such that Player II has a winning strategy in the game $\game(d\bar x,p\bar y,\varepsilon,\alpha)$.
Again, if the tuples are empty, we write just $E(d,\varepsilon,\alpha)$ instead of $E(d,\emptyset,\emptyset,\varepsilon,\alpha)$.
\begin{lemma}\label{lem:GamesAreBorel}
$E(d,\bar x,\bar y,\varepsilon,\alpha)$ is Borel.
\end{lemma}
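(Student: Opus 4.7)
The plan is to prove Borelness by transfinite induction on the ordinal parameter $\alpha$, uniformly over all $\varepsilon > 0$ and all finite tuples $\bar x, \bar y \in X^{<\omega}$ of equal length. The key observation is that the game $\game(d\bar x,p\bar y,\varepsilon,\alpha)$ is well-founded: the ordinals chosen by Player I strictly decrease, so every play terminates after finitely many steps. Combined with the fact that $X$ is countable and that $\alpha < \omega_1$, the tree of possible plays has only countable branching at each node, which is exactly what we need to stay inside the Borel hierarchy.

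For the base case $\alpha = 0$, the game is decided immediately without any moves, and Player II wins precisely when $f^{\bar x,\bar y}(\Corr_0,d,p) < \varepsilon$, where $\Corr_0 = \{(x_j,y_j) : j \leq |\bar x|\}$. By the Borelness assumption (3) on the functions $f^{E,F}$, the map $p \mapsto f^{\bar x,\bar y}(\Corr_0,d,p)$ is Borel, so $E(d,\bar x,\bar y,\varepsilon,0)$ is Borel.

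For the inductive step, assume the claim holds for all $\beta < \alpha$, every $\varepsilon > 0$, and all finite tuples. A winning strategy for Player II in $\game(d\bar x,p\bar y,\varepsilon,\alpha)$ corresponds to having a response to each possible opening move of Player I, which consists of a choice of $\alpha_1 < \alpha$ together with either an element $m_1 \in X$ extending $\bar x$ or an element $n_1 \in X$ extending $\bar y$, such that Player II wins the subgame with parameter $\alpha_1$. Unraveling, this gives
\[
E(d,\bar x,\bar y,\varepsilon,\alpha) = \bigcap_{\alpha_1 < \alpha} \left[ \Big( \bigcap_{m_1 \in X} \bigcup_{n_1 \in X} E(d,\bar x m_1,\bar y n_1,\varepsilon,\alpha_1) \Big) \cap \Big( \bigcap_{n_1 \in X} \bigcup_{m_1 \in X} E(d,\bar x m_1,\bar y n_1,\varepsilon,\alpha_1) \Big) \right].
\]
All three intersections and both unions range over countable index sets, and each set on the right is Borel by the induction hypothesis; hence $E(d,\bar x,\bar y,\varepsilon,\alpha)$ is Borel.

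I do not expect any substantive obstacle here: the argument is a completely standard transfinite induction, and the only conceptual ingredients needed are (i) condition (3) to launch the base case, and (ii) countability of both $X$ and $\alpha$ to keep the inductive step inside the Borel $\sigma$-algebra. The monotonicity, symmetry, and transitivity properties of the $f^{E,F}$ play no role in this particular lemma; they are used elsewhere to relate the games to the pseudometric $\rho$.
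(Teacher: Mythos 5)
Your proof is correct and follows essentially the same route as the paper: transfinite induction on $\alpha$, with the base case given by condition (3) and the inductive step by unwinding the first round of the game into countable intersections and unions. The only cosmetic difference is that you intersect over all $\alpha_1<\alpha$, which treats limit and successor ordinals uniformly, whereas the paper splits into the two cases and, for $\alpha=\beta+1$, uses only the term $\alpha_1=\beta$ (implicitly relying on the monotonicity of the winning sets in the ordinal parameter); both versions are valid.
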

\begin{proof}
We shall prove it by induction on $\alpha$. Suppose that $\alpha=0$. Then the game is decided from the beginning and Player II wins if $f^0(\Corr_0,d,p)<\varepsilon$, where $\Corr_0$ is the correspondence given by the tuples $\bar x$ and $\bar y$. That is by definition a Borel condition, so $E(d,\bar x,\bar y,\varepsilon,0)$ is Borel.

Now suppose that $\alpha>0$ and we have checked that $E(d,\bar u,\bar v,\varepsilon,\beta)$ is Borel for all tuples $\bar u$ and $\bar v$, and $\beta < \alpha$.

If $\alpha$ is limit, then $E(d,\bar x,\bar y,\varepsilon,\alpha)$ is just $\bigcap_{\beta<\alpha} E(d,\bar x,\bar y,\varepsilon,\beta)$ which is Borel by assumption. So suppose that $\alpha=\beta+1$ for some $\beta$. Then by definition
\[\begin{split}
E(d,\bar x,\bar y,\varepsilon,\alpha) = & 
\left(\bigcap_{m\in X}\bigcup_{n\in X} E(d,\bar x m,\bar y n,\varepsilon,\beta)\right)\cap\\
 & \cap \left( \bigcap_{n\in X} \bigcup_{m\in X} E(d,\bar x m,\bar y n,\varepsilon,\beta)
\right)
\end{split}\]
which is Borel.
\end{proof}

\begin{lemma}\label{lem:tranzitivniHra}Let $d,p,e\in\mathcal{X}$ and let $\bar x$, $\bar y$, and $\bar z$ be tuples from $X$ of the same length. Let $\alpha$ be a countable ordinal, $\varepsilon > 0$ and $\varepsilon'>0$.
\begin{enumerate}
\item If Player II has a winning strategy in $\game(d\bar x,p\bar y,\varepsilon,\alpha)$ and in $\game(p\bar y, e\bar z, \varepsilon')$ then he also has a winning strategy in $\game(d\bar x,e\bar z,\varepsilon + \varepsilon',\alpha)$.
\item If Player II has a winning strategy in $\game(d\bar x, e\bar z, \varepsilon, \alpha)$ and in $\game(p\bar y, e\bar z, \varepsilon')$ then he also has a winning strategy in $\game(d\bar x,p\bar y,\varepsilon+\varepsilon',\alpha)$.
\end{enumerate}
\end{lemma}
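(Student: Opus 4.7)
The plan is to prove both parts by simulating the two assumed games in parallel, using one of the three structures as an intermediary, and then combining the resulting correspondences via the transitivity axiom~(5) and the symmetry axiom~(4) of the family $f^{E,F}$. Part~(1) and part~(2) will be established by the same template, the only difference being which structure is the intermediary and whether one relation is inverted before composing.

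For part~(1), let $\sigma_1$ be a winning strategy for Player II in $\game(d\bar x,p\bar y,\varepsilon,\alpha)$ and $\sigma_2$ a winning strategy in $\game(p\bar y,e\bar z,\varepsilon')$; we build a strategy $\tau$ for $\game(d\bar x,e\bar z,\varepsilon+\varepsilon',\alpha)$ using $p\bar y$ as intermediary. Whenever Player~I in the combined game plays an ordinal $\alpha_i$ together with a point $m_i$ extending $\bar x$, the strategy $\tau$ first feeds $(\alpha_i,m_i)$ to $\sigma_1$ to obtain a response $n_i$ extending $\bar y$, then feeds $n_i$ (as a move extending $\bar y$) to $\sigma_2$ to obtain a response $q_i$ extending $\bar z$, which is then played. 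If Player~I plays $q_i$ extending $\bar z$ instead, the simulation runs in reverse: $\sigma_2$ produces $n_i$ from $q_i$, and then $\sigma_1$ produces $m_i$ from $(\alpha_i,n_i)$. Since Player~I in the combined game is forced to decrease the ordinals, the play terminates at some finite step $k$ with $\alpha_k=0$, and the same $k$ steps have been shadowed in each of the two assumed games.

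At termination we have three correspondences: the one $\Corr_k$ produced in the combined game between $\bar x m_1\ldots m_k$ and $\bar z q_1\ldots q_k$, and the shadow correspondences $\Corr^{(1)}_k$ between $\bar x m_1\ldots m_k$ and $\bar y n_1\ldots n_k$ and $\Corr^{(2)}_k$ between $\bar y n_1\ldots n_k$ and $\bar z q_1\ldots q_k$. By construction $\Corr_k = \Corr^{(2)}_k \circ \Corr^{(1)}_k$. Winningness of $\sigma_1$ in the bounded game gives $f^k(\Corr^{(1)}_k,d,p)<\varepsilon$, and winningness of $\sigma_2$ in the unbounded game in particular gives $f^k(\Corr^{(2)}_k,p,e)<\varepsilon'$ at the intermediate step $k$. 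The transitivity axiom~(5) then yields $f^k(\Corr_k,d,e) \leq f^k(\Corr^{(1)}_k,d,p) + f^k(\Corr^{(2)}_k,p,e) < \varepsilon+\varepsilon'$, so $\tau$ is winning.

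For part~(2), one uses $e\bar z$ as the intermediary and reverses the second game's correspondence before composing. Given a winning strategy $\sigma'_1$ in $\game(d\bar x,e\bar z,\varepsilon,\alpha)$ and a winning strategy $\sigma'_2$ in $\game(p\bar y,e\bar z,\varepsilon')$, a move $m_i$ extending $\bar x$ is routed through $\sigma'_1$ (producing $q_i$) and then through $\sigma'_2$ (treating $q_i$ as a move extending $\bar z$ and receiving $n_i$ extending $\bar y$), and symmetrically for a move $n_i$ extending $\bar y$. The final correspondence between $\bar x m_1\ldots m_k$ and $\bar y n_1\ldots n_k$ is $(\Corr^{(2)}_k)^{-1}\circ \Corr^{(1)}_k$, and transitivity together with the symmetry axiom $f^k((\Corr^{(2)}_k)^{-1},e,p)=f^k(\Corr^{(2)}_k,p,e)<\varepsilon'$ produces the bound $f^k(\Corr_k,d,p)<\varepsilon+\varepsilon'$. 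I do not expect any genuine obstacle; the only point deserving care is that the ordinal bookkeeping travels along the bounded shadow game only, while the unbounded shadow game simply accumulates correspondences whose $f^k$-values are controlled automatically by the winning condition at every finite stage.
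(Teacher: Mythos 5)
Your proof is correct and rests on the same core idea as the paper's: route each move through the intermediary structure using the two given strategies and then combine the resulting $f^k$-bounds via the transitivity and symmetry axioms; the paper merely packages this composition as an induction on $\alpha$ (simulate one round, invoke the inductive hypothesis on the residual games), whereas you unroll it into an explicit parallel simulation of full plays, and both organizations are valid since a partial play consistent with the winning strategy of the unbounded game already satisfies $f^k<\varepsilon'$ at every finite stage. One small imprecision: if points repeat among the intermediary responses $n_i$, then $\Corr_k$ is only \emph{contained in}, not equal to, $\Corr^{(2)}_k\circ\Corr^{(1)}_k$, but the monotonicity axiom turns this containment into the inequality you need, so nothing breaks.
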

\begin{proof}We only prove (1); (2) is proved analogously.

We shall prove it by induction on $\alpha$. It is clear that the statement holds for $\alpha = 0$. Now suppose that $\alpha > 0$ and the statement of the lemma holds for every $\beta < \alpha$. In order to shorten the description of the proof, let us denote by $\game_1$, $\game_2$ and $\game_3$ the games $\game(d\bar x,p\bar y,\varepsilon,\alpha)$, $\game(p\bar y, e\bar z, \varepsilon')$ and $\game(d\bar x,e\bar z,\varepsilon + \varepsilon',\alpha)$, respectively.

Suppose that Player I in $\game_3$ plays some ordinal $\alpha_1 < \alpha$ and a point $m_1\in X$ extending $\bar x$. Then we play the same move in game $\game_1$, and let Player II use his winning strategy in this game and pick a point $n_1 \in X$ such that Player II has a winning strategy in $\game(d\bar x m_1,p\bar y n_1,\varepsilon,\alpha_1)$.

Further, let Player I play $n_1$ in the game $\game_2$, and let Player II use his winning strategy in this game and pick a point $w_1\in X$ such that Player II has a winning strategy in $\game(p\bar y n_1,e\bar z w_1,\varepsilon')$. By the inductive assumption, Player II has a winning strategy in $\game(d\bar x m_1,e\bar z w_1,\varepsilon + \varepsilon',\alpha_1)$; hence we let Player II play $w_1$ as his response in the game $\game_3$. If Player I plays in her first move a point extending $\bar z$, then we proceed analogously.
\end{proof}

\begin{defin}
Fix $d\in\mathcal{X}$. For every $\varepsilon > 0$ and two tuples $\bar x$ and $\bar y$ from $X$ of the same length let $\alpha(\bar x,\bar y,\varepsilon)$ be the least ordinal $\alpha$ such that Player II does \underline{not} have a winning strategy in the game $\game(d\bar x,d\bar y,\varepsilon,\alpha)$. If Player II has a winning strategy in $\game(d\bar x,d\bar y,\varepsilon,\alpha)$ for every $\alpha<\omega_1$, then we set $\alpha(\bar x,\bar y,\varepsilon)=-1$.

We define a \emph{Scott rank} of $d$, $\alpha_d$ in symbols, to be\\ $\sup\{\alpha(\bar x,\bar y,\varepsilon)\setsep (\bar x,\bar y)\text{ appropriate elements of the same length},\varepsilon>0\}$.
\end{defin}
\begin{lemma}\label{lem:ScottRank}
Let $\varepsilon > 0$ and $\bar x$, $\bar y$ be tuples of the same length from $X$.

If Player II has a winning strategy in $\game(d\bar x,d\bar y,\varepsilon,\alpha_d)$ then he also has a winning strategy in $\game(d\bar x,d\bar y,\varepsilon,\alpha_d+1)$.
\end{lemma}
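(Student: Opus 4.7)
The plan is to combine a simple monotonicity observation with the very definition of $\alpha_d$ as a supremum. The monotonicity principle I would first record is the following: for every triple $(\bar u,\bar v,\varepsilon)$ and every pair of countable ordinals $\alpha\leq\beta$, any winning strategy for Player II in $\game(d\bar u, d\bar v,\varepsilon,\beta)$ restricts to one in $\game(d\bar u, d\bar v,\varepsilon,\alpha)$. The reason is that Player I's ordinal choice $\alpha_1<\alpha$ in the shorter game is \emph{a fortiori} a legal move in the longer one, so Player II can simply simulate his $\beta$-strategy against Player I's moves in the $\alpha$-game; the descending ordinal chains produced in both games obey the same rules, and the winning condition is the same Borel condition on the final correspondence.

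With monotonicity in hand, I would exploit the definition of $\alpha_d$. Assume Player II has a winning strategy in $\game(d\bar x, d\bar y,\varepsilon,\alpha_d)$ and examine the value $\alpha(\bar x,\bar y,\varepsilon)$. If this value were a genuine ordinal (i.e.\ different from $-1$), then, because $\alpha_d$ is defined as the supremum of $\alpha(\bar u,\bar v,\varepsilon')$ over all triples $(\bar u,\bar v,\varepsilon')$, the particular triple $(\bar x,\bar y,\varepsilon)$ would contribute and give $\alpha(\bar x,\bar y,\varepsilon)\leq\alpha_d$. By the definition of $\alpha(\bar x,\bar y,\varepsilon)$ as the least ordinal where Player II fails, Player II would have no winning strategy in $\game(d\bar x,d\bar y,\varepsilon,\alpha(\bar x,\bar y,\varepsilon))$, and the contrapositive of the monotonicity observation would propagate this failure up to the ordinal $\alpha_d$, contradicting the standing hypothesis.

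Hence $\alpha(\bar x,\bar y,\varepsilon)=-1$, which by the definition of this symbol means that Player II has a winning strategy in $\game(d\bar x, d\bar y,\varepsilon,\alpha)$ for \emph{every} countable ordinal $\alpha$. Taking $\alpha=\alpha_d+1$ (which is countable in the usual Scott-rank setting) yields the desired conclusion. There is no substantial obstacle here: the lemma is essentially a bookkeeping consequence of monotonicity together with the supremum definition of $\alpha_d$, and it is precisely this fixed-point character of the Scott rank that will make it a useful tool in the subsequent Borelness results.
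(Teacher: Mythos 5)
Your proof is correct and takes essentially the same route as the paper, whose one-line argument derives the contradiction $\alpha(\bar x,\bar y,\varepsilon)=\alpha_d+1>\alpha_d\geq\alpha(\bar x,\bar y,\varepsilon)$ from the supremum definition of $\alpha_d$; the monotonicity of winning strategies in the ordinal parameter, which you spell out explicitly, is used implicitly there as well. No gaps.
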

\begin{proof}
If it were not true, then we would have $ \alpha(\bar x,\bar y,\varepsilon) = \alpha_{d} + 1 > \alpha_{d} \geq \alpha(\bar x,\bar y,\varepsilon)$, which is a contradiction.
\end{proof}

Our aim is now to define a set of those $p\in\mathcal{X}$ such that $\rho(d,p)=0$, for a fixed element $d\in\mathcal{X}$. In the following, for a subset $A\subseteq\mathcal{X}$ we denote by $A^c$ the complement $\mathcal{X}\setminus A$. Also, we agree that $X^0$ denotes an empty sequence. We set
\[\begin{split}
I_d = \bigcap_{\varepsilon\in\Rat^+}\Bigg( & E(d,\varepsilon,\alpha_d)\cap
 \bigcap_{\substack{n\in\Nat\cup\{0\}\\ \bar x,\bar y\in X^n}} \bigg( E^c(d,\bar x,\bar y,\varepsilon,\alpha_d)\cup\\
 & \qquad \bigcap_{\varepsilon'\in\Rat^+} E(d,\bar x,\bar y,\varepsilon+\varepsilon',\alpha_d+1) \bigg)\Bigg).
\end{split}\]
It follows from Lemma \ref{lem:GamesAreBorel} that $I_d$ is a Borel set. To translate the definition above into words, it says that $p\in I_d$ if and only if for every $\varepsilon>0$ we have that Player II has a winning strategy in the game $\game(d,p,\varepsilon,\alpha_d)$ and, for all tuples of the same length (possibly empty tuples) $\bar x$, $\bar y$,  
if Player II has a winning strategy in the game $\game(d\bar x,p\bar y,\varepsilon,\alpha_d)$ then for every $\varepsilon'>0$ he has a winning strategy in the game $\game(d\bar x,p\bar y,\varepsilon+\varepsilon',\alpha_d+1)$. 

\begin{thm}\label{thm:LipEquivalenceBorel}
Let $d\in\mathcal{X}$ be arbitrary. For every $p\in \mathcal{X}$ we have that $\rho(d,p)=0$ if and only if $p\in I_d$.
\end{thm}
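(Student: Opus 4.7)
I plan to prove the two directions using the game-theoretic tools of the section, with the main difficulty lying in the forward implication. For the backward direction (assume $p\in I_d$), fix $\varepsilon>0$; by Lemma~\ref{lem:DistanceZeroEquivalenceGames}, it suffices to exhibit a winning strategy for Player~II in the infinite game $\game(d,p,\varepsilon)$. Setting $\varepsilon_k=\varepsilon(1-2^{-(k+1)})$, I will recursively produce tuples $\bar x_k,\bar y_k\in X^k$ along with a winning strategy for Player~II in $\game(d\bar x_k,p\bar y_k,\varepsilon_k,\alpha_d+1)$: the base case uses the first clause of $I_d$ together with its conditional clause applied to empty tuples; at step $k{\to}k{+}1$, when Player~I plays a point $m$ extending say the left tuple, we feed it into the rank-$(\alpha_d+1)$ strategy together with the ordinal $\alpha_d$, obtain Player~II's response $n$ giving extended tuples with a winning strategy at rank $\alpha_d$ and budget $\varepsilon_k$, and invoke the conditional clause of $I_d$ with $\varepsilon'=\varepsilon/2^{k+2}$ to bump it back up to rank $\alpha_d+1$ with the new budget $\varepsilon_{k+1}$. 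Standard bookkeeping ensures the eventual correspondence $\mathcal R=\bigcup_k\mathcal R_k$ projects onto $X$ on both sides. Since winning at rank $\alpha_d$ at stage $k$ forces $f^k(\mathcal R_k,d,p)<\varepsilon_k$ (tested by letting Player~I play ordinal $0$ and invoking monotonicity), axiom~(2) yields $f(\mathcal R,d,p)\leq\varepsilon$, hence $\rho(d,p)\leq\varepsilon$, and letting $\varepsilon\to 0$ concludes.

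For the forward direction (assume $\rho(d,p)=0$), the condition $p\in E(d,\varepsilon,\alpha_d)$ is immediate, since a winning strategy for Player~II in the infinite game $\game(d,p,\varepsilon/2)$ truncates to wins in every finite-rank game. The whole content therefore lies in the conditional clause, which I will establish by \emph{routing through the space $d$} using an auxiliary tuple. Given $\bar x,\bar y\in X^n$ and a winning strategy $\tau_1$ for Player~II in $\game(d\bar x,p\bar y,\varepsilon,\alpha_d)$, fix also a winning strategy $\tau_2$ for Player~II in the infinite game $\game(d,p,\varepsilon'/2)$. Define $\bar x^*\in X^n$ by playing the role of Player~I in a shadow copy of $\game(d,p,\varepsilon'/2)$ where the $n$ elements of $\bar y$ are offered on the $p$-side; the corresponding $d$-side responses of $\tau_2$ are declared to be $\bar x^*$, and the continuation of $\tau_2$ provides a winning strategy for Player~II in the infinite game $\game(p\bar y,d\bar x^*,\varepsilon'/2)$.

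The main obstacle is to show that Player~II also wins the game $\game(d\bar x,d\bar x^*,\varepsilon+\varepsilon'/2,\alpha_d)$ entirely within $d$. I construct the required strategy by running $\tau_1$ and $\tau_2$ in tandem: when Player~I plays $(\alpha_k,u)$ extending the left tuple, I feed $u$ into $\tau_1$ with the same ordinal to obtain a $p$-side point $v$, then feed $v$ as a $p$-side move into $\tau_2$'s shadow to obtain a $d$-side point $w$, and play $w$; right-tuple extensions are handled dually by interchanging the roles of $\tau_1$ and $\tau_2$. The resulting correspondence $\{(x_i,x_i^*)\}_i\cup\{(u_j,w_j)\}_j$ decomposes, via the transitivity axiom~(5) with intermediate space $p$ and intermediate tuple $\bar yv_1v_2\cdots$, as $\mathcal T_2\circ\mathcal T_1$, where $\mathcal T_1=\{(x_i,y_i)\}\cup\{(u_j,v_j)\}$ is controlled by $\tau_1$ (giving $f^{E,F}<\varepsilon$) and $\mathcal T_2=\{(y_i,x_i^*)\}\cup\{(v_j,w_j)\}$ is controlled by $\tau_2$ after applying the symmetry axiom~(4) to flip sides (giving $f^{F,H}<\varepsilon'/2$); transitivity then bounds the total $f$ by $\varepsilon+\varepsilon'/2$ at every finite stage. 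Iterated application of Lemma~\ref{lem:ScottRank} upgrades this rank-$\alpha_d$ win within $d$ to a rank-$(\alpha_d+1)$ win. Finally, Lemma~\ref{lem:tranzitivniHra}(2) applied with the triple $(d\bar x,d\bar x^*,p\bar y)$ combines this rank-$(\alpha_d+1)$ win with the infinite win in $\game(p\bar y,d\bar x^*,\varepsilon'/2)$ to produce a winning strategy for Player~II in $\game(d\bar x,p\bar y,\varepsilon+\varepsilon',\alpha_d+1)$, as required.
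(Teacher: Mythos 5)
Your proof is correct and follows essentially the same route as the paper's: the backward direction is the paper's geometric-series budget argument (win at rank $\alpha_d$, upgrade to $\alpha_d+1$ via the conditional clause of $I_d$ with a shrinking $\varepsilon'$, repeat), and the forward direction is exactly the paper's detour through $d$ via an auxiliary tuple $\bar z=\bar x^*$ supplied by the infinite game, followed by the Scott-rank upgrade (Lemma~\ref{lem:ScottRank}) and the second transitivity clause. The only cosmetic difference is that you re-derive Lemma~\ref{lem:tranzitivniHra}(1) inline by running the two strategies in tandem instead of citing it, and a single application of Lemma~\ref{lem:ScottRank} (not an iterated one) already suffices to pass from rank $\alpha_d$ to $\alpha_d+1$.
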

\begin{proof}
Fix some $d\in\mathcal{X}$ and also pick some $p\in\mathcal{X}$. By Lemma \ref{lem:DistanceZeroEquivalenceGames} it suffices to check that $p\in I_d$ if and only if Player II has a winning strategy in $\game(d,p,\varepsilon)$ for every $\varepsilon>0$.

We first show the left-to-right implication. So we fix some $\varepsilon>0$. Since $p\in I_d$, by definition $p\in E(d,\varepsilon/2,\alpha_d)$. Then it follows, also from the definition of $I_d$, that in fact $p\in E(d,3\varepsilon/4,\alpha_d+1)$. We start playing the game $\game(d,p,\varepsilon)$, which in the sequel will be denoted just by $\game$, with players I and II, however on the side we will also play several auxiliary games. We let Player I play her turn in $\game$, which is, say, an element $m_1$. We consider an auxiliary game $\game(d,p,3\varepsilon/4,\alpha_d+1)$ and we force the first player there to copy her move from $\game$ and moreover play the ordinal $\alpha_d$. Since the second player has a winning strategy in $\game(d,p,3\varepsilon/4,\alpha_d+1)$, we let him use the strategy and then use his response, say a point $n_1$, as a move of Player II in the main game $\game$. Now by definition Player II has a winning strategy in the game $\game(d m_1, p n_1, 3\varepsilon/4,\alpha_d)$. However, by the definition of $I_d$ this immediately implies that he has a winning strategy also in $\game(d m_1, p n_1,7\varepsilon/8,\alpha_d+1)$. We again let Player I play her second turn in the main game $\game$, which is, say, a point $n_2$. We consider an auxiliary game $\game(d m_1,p n_1,7\varepsilon/8,\alpha_d+1)$ and we force the first player there to copy her move from $\game$, i.e. playing $n_2$, and moreover play the ordinal $\alpha_d$. Since the second player has a winning strategy in $\game(d m_1,p n_1,7\varepsilon/8,\alpha_d+1)$, we let him use his strategy, which is, say, a point $m_2$, and then we use this response in the main game $\game$. Now by definition Player II has a winning strategy in the game $\game(d m_1 m_2, p n_1 n_2, 7\varepsilon/8,\alpha_d)$. However, again by the definition of $I_d$ this implies that he actually has a winning strategy also in $\game(d m_1 m_2, p n_1 n_2, 15\varepsilon/16,\alpha_d+1)$.

It is now clear that by using the winning strategies from these auxiliary games we get some strategy for Player II in the main game $\game$ which is winning.

\medskip
We now prove the reverse implication. So we suppose that Player II has a winning strategy in the game $\game(d,p,\varepsilon)$ for every $\varepsilon>0$. We show that $p\in I_d$. We need to show that for every $\varepsilon>0$ and $\varepsilon'>0$ we have that Player II has a winning strategy in the game $\game(d,p,\varepsilon,\alpha_d)$ and that for all tuples of elements of the same length $\bar x$, $\bar y$, 
if he has a winning strategy in $\game(d\bar x,p\bar y,\varepsilon,\alpha_d)$ then he has also a winning strategy in $\game(d\bar x,p\bar y,\varepsilon+\varepsilon',\alpha_d+1)$.

The former is clear. Indeed, if Player II has a winning strategy in $\game(d,p,\varepsilon)$, which is our assumption, then he also has a winning strategy in $\game(d,p,\varepsilon,\alpha_d)$. So we must show the latter. Fix some tuples $\bar x,\bar y$ of the same length. We need to show that if Player II has a winning strategy in $\game(d\bar x,p\bar y, \varepsilon,\alpha_d)$, then for every $\varepsilon'>0$ he has a winning strategy also in $\game(d\bar x,p\bar y,\varepsilon+\varepsilon',\alpha_d+1)$.

Since $\rho(d,p)=0$, Player II has a winning strategy in the game $\game(p,d,\varepsilon'/2)$, so there exists a tuple $\bar z$ of elements of the same length as $\bar y$ such that Player II has a winning strategy in the game $\game(p\bar y,d\bar z, \varepsilon'/2)$. 

Since, by Lemma~\ref{lem:tranzitivniHra}, Player II has  a winning strategy in $\game(d \bar x,d\bar z,\varepsilon+\varepsilon'/2,\alpha_d)$, by Lemma \ref{lem:ScottRank} we get that Player II has a winning strategy in $\game(d \bar x,d\bar z,\varepsilon+\varepsilon'/2,\alpha_d+1)$. By Lemma~\ref{lem:tranzitivniHra} again, Player II has a winning strategy in the game $\game(d\bar x,p\bar y,\varepsilon+\varepsilon',\alpha_d+1)$.
\end{proof}
The following corollary immediately follows from Theorem \ref{thm:LipEquivalenceBorel} and the fact that $I_d$ is a Borel subset of $\mathcal{X}$.
\begin{cor}\label{cor:LipBorelClasses}
For every $d\in\mathcal{X}$ the set $\{p\in\mathcal{X}\setsep \rho(d,p)=0\}$ is Borel.
\end{cor}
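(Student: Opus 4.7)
The plan is extremely short since all the real work has already been done. Theorem~\ref{thm:LipEquivalenceBorel} identifies the set $\{p\in\mathcal{X}\setsep \rho(d,p)=0\}$ with $I_d$, so it suffices to exhibit $I_d$ as a Borel subset of $\mathcal{X}$.

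Recall the defining formula
\[\begin{split}
I_d = \bigcap_{\varepsilon\in\Rat^+}\Bigg( & E(d,\varepsilon,\alpha_d)\cap
 \bigcap_{\substack{n\in\Nat\cup\{0\}\\ \bar x,\bar y\in X^n}} \bigg( E^c(d,\bar x,\bar y,\varepsilon,\alpha_d)\cup\\
 & \qquad \bigcap_{\varepsilon'\in\Rat^+} E(d,\bar x,\bar y,\varepsilon+\varepsilon',\alpha_d+1) \bigg)\Bigg).
\end{split}\]
First I would observe that every index set appearing here is countable: $\Rat^+$ is countable, $X$ is a countable set by the standing assumption at the beginning of the section, and therefore $X^n$ is countable for every $n\in\Nat\cup\{0\}$. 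Hence the expression involves only countable unions, intersections and complements of sets of the form $E(d,\bar x,\bar y,\varepsilon,\alpha_d)$ and $E(d,\bar x,\bar y,\varepsilon+\varepsilon',\alpha_d+1)$.

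Next I would invoke Lemma~\ref{lem:GamesAreBorel}, which says that for every choice of $\bar x$, $\bar y$, $\varepsilon$ and countable ordinal $\alpha$ the set $E(d,\bar x,\bar y,\varepsilon,\alpha)$ is a Borel subset of $\mathcal{X}$. Since the Borel $\sigma$-algebra is closed under countable intersections, countable unions and complementation, it follows that $I_d$ is Borel.

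Finally, applying Theorem~\ref{thm:LipEquivalenceBorel} we conclude
\[
\{p\in\mathcal{X}\setsep \rho(d,p)=0\}=I_d,
\]
which is Borel. There is no real obstacle: both the game-characterisation (the nontrivial content) and the Borelness of each individual game set have already been established, and this corollary is only the bookkeeping step that assembles them via countable Borel operations.
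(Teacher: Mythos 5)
Your proof is correct and is exactly the paper's argument: the paper likewise deduces the corollary immediately from Theorem~\ref{thm:LipEquivalenceBorel} together with the observation (already noted when $I_d$ is defined) that $I_d$ is Borel by Lemma~\ref{lem:GamesAreBorel} and closure of the Borel sets under countable operations. Your spelling out of the countability of the index sets is just a more explicit version of the same bookkeeping.
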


\bibliographystyle{siam}
\bibliography{ref}

\begin{thebibliography}{10}

\bibitem{AC}
{\sc J.~A. \'{A}lvarez L\'{o}pez and A.~Candel}, {\em Nonreduction of relations
  in the {G}romov space to {P}olish actions}, Notre Dame J. Form. Log., 59
  (2018), pp.~205--213.

\bibitem{Amir}
{\sc D.~Amir}, {\em On isomorphisms of continuous function spaces}, Israel J.
  Math., 3 (1965), pp.~205--210.

\bibitem{ACKKLS}
{\sc M.~Argerami, S.~Coskey, M.~Kalantar, M.~Kennedy, M.~Lupini, and M.~Sabok},
  {\em The classification problem for finitely generated operator systems and
  spaces}, arXiv:1411.0512 [math.OA],  (2014).

\bibitem{AD}
{\sc S.~A. Argyros and I.~Deliyanni}, {\em Examples of asymptotic {$l_1$}
  {B}anach spaces}, Trans. Amer. Math. Soc., 349 (1997), pp.~973--995.

\bibitem{BeKe96}
{\sc H.~Becker and A.~S. Kechris}, {\em The descriptive set theory of {P}olish
  group actions}, vol.~232 of London Mathematical Society Lecture Note Series,
  Cambridge University Press, Cambridge, 1996.

\bibitem{Beer}
{\sc G.~Beer}, {\em A {P}olish topology for the closed subsets of a {P}olish
  space}, Proc. Amer. Math. Soc., 113 (1991), pp.~1123--1133.

\bibitem{BY}
{\sc I.~Ben~Yaacov}, {\em Topometric spaces and perturbations of metric
  structures}, Log. Anal., 1 (2008), pp.~235--272.

\bibitem{metriclogic}
{\sc I.~Ben~Yaacov, A.~Berenstein, C.~W. Henson, and A.~Usvyatsov}, {\em Model
  theory for metric structures}, in Model theory with applications to algebra
  and analysis. {V}ol. 2, vol.~350 of London Math. Soc. Lecture Note Ser.,
  Cambridge Univ. Press, Cambridge, 2008, pp.~315--427.

\bibitem{BYDNT}
{\sc I.~Ben~Yaacov, M.~Doucha, A.~Nies, and T.~Tsankov}, {\em Metric {S}cott
  analysis}, Adv. Math., 318 (2017), pp.~46--87.

\bibitem{Blackadar}
{\sc B.~Blackadar}, {\em Operator algebras}, vol.~122 of Encyclopaedia of
  Mathematical Sciences, Springer-Verlag, Berlin, 2006.
\newblock Theory of $C^*$-algebras and von Neumann algebras, Operator Algebras
  and Non-commutative Geometry, III.

\bibitem{BCLSV}
{\sc C.~Borgs, J.~T. Chayes, L.~Lov\'asz, V.~T. S\'os, and K.~Vesztergombi},
  {\em Convergent sequences of dense graphs. {I}. {S}ubgraph frequencies,
  metric properties and testing}, Adv. Math., 219 (2008), pp.~1801--1851.

\bibitem{BBI}
{\sc D.~Burago, Y.~Burago, and S.~Ivanov}, {\em A course in metric geometry},
  vol.~33 of Graduate Studies in Mathematics, American Mathematical Society,
  Providence, RI, 2001.

\bibitem{cle}
{\sc J.~D. Clemens}, {\em Isometry of {P}olish metric spaces}, Ann. Pure Appl.
  Logic, 163 (2012), pp.~1196--1209.

\bibitem{CleGaoKe}
{\sc J.~D. Clemens, S.~Gao, and A.~S. Kechris}, {\em Polish metric spaces:
  their classification and isometry groups}, Bull. Symbolic Logic, 7 (2001),
  pp.~361--375.

\bibitem{CDKpart2}
{\sc M.~{C{\'u}th}, M.~{Doucha}, and O.~{Kurka}}, {\em Complexity of distances:
  Reductions of distances between metric and {B}anach spaces}, preprint
  available at arxiv.org,  (2020).

\bibitem{Ency}
{\sc M.~M. Deza and E.~Deza}, {\em Encyclopedia of distances}, Springer,
  Berlin, fourth~ed., 2016.

\bibitem{DuKa}
{\sc Y.~Dutrieux and N.~J. Kalton}, {\em Perturbations of isometries between
  {$C(K)$}-spaces}, Studia Math., 166 (2005), pp.~181--197.

\bibitem{FLR}
{\sc V.~Ferenczi, A.~Louveau, and C.~Rosendal}, {\em The complexity of
  classifying separable {B}anach spaces up to isomorphism}, J. Lond. Math. Soc.
  (2), 79 (2009), pp.~323--345.

\bibitem{gao}
{\sc S.~Gao}, {\em Invariant descriptive set theory}, vol.~293 of Pure and
  Applied Mathematics (Boca Raton), CRC Press, Boca Raton, FL, 2009.

\bibitem{GaoKe}
{\sc S.~Gao and A.~S. Kechris}, {\em On the classification of {P}olish metric
  spaces up to isometry}, Mem. Amer. Math. Soc., 161 (2003), pp.~viii+78.

\bibitem{Gibbs}
{\sc A.~L. Gibbs and F.~Edward~Su}, {\em On choosing and bounding probability
  metrics}, International Statistical Review, 70 (2002), pp.~419--435.

\bibitem{gkl}
{\sc G.~Godefroy, N.~J. Kalton, and D.~Li}, {\em On subspaces of {$L^1$} which
  embed into {$l_1$}}, J. Reine Angew. Math., 471 (1996), pp.~43--75.

\bibitem{Gro}
{\sc M.~Gromov}, {\em Metric structures for {R}iemannian and non-{R}iemannian
  spaces}, Modern Birkh\"auser Classics, Birkh\"auser Boston, Inc., Boston, MA,
  english~ed., 2007.
\newblock Based on the 1981 French original, With appendices by M. Katz, P.
  Pansu and S. Semmes, Translated from the French by Sean Michael Bates.

\bibitem{Hjo}
{\sc G.~Hjorth}, {\em Classification and orbit equivalence relations}, vol.~75
  of Mathematical Surveys and Monographs, American Mathematical Society,
  Providence, RI, 2000.

\bibitem{HjoKe97}
{\sc G.~Hjorth and A.~S. Kechris}, {\em New dichotomies for {B}orel equivalence
  relations}, Bull. Symbolic Logic, 3 (1997), pp.~329--346.

\bibitem{Jan}
{\sc S.~Janson}, {\em Graphons, cut norm and distance, couplings and
  rearrangements}, vol.~4 of New York Journal of Mathematics. NYJM Monographs,
  State University of New York, University at Albany, Albany, NY, 2013.

\bibitem{Kad}
{\sc M.~I. Kadets}, {\em Remark on the gap between subspaces}, Funkcional.
  Anal. i Prilo\v zen., 9 (1975), pp.~73--74.

\bibitem{KadKas}
{\sc R.~V. Kadison and D.~Kastler}, {\em Perturbations of von {N}eumann
  algebras. {I}. {S}tability of type}, Amer. J. Math., 94 (1972), pp.~38--54.

\bibitem{KalSpu}
{\sc O.~F.~K. Kalenda and J.~Spurn\'y}, {\em On a difference between
  quantitative weak sequential completeness and the quantitative {S}chur
  property}, Proc. Amer. Math. Soc., 140 (2012), pp.~3435--3444.

\bibitem{KalOst}
{\sc N.~J. Kalton and M.~I. Ostrovskii}, {\em Distances between {B}anach
  spaces}, Forum Math., 11 (1999), pp.~17--48.

\bibitem{kanovei}
{\sc V.~Kanovei}, {\em Borel equivalence relations}, vol.~44 of University
  Lecture Series, American Mathematical Society, Providence, RI, 2008.
\newblock Structure and classification.

\bibitem{Ke}
{\sc A.~S. Kechris}, {\em Classical descriptive set theory}, vol.~156 of
  Graduate Texts in Mathematics, Springer-Verlag, New York, 1995.

\bibitem{Ke98}
\leavevmode\vrule height 2pt depth -1.6pt width 23pt, {\em The descriptive
  classification of some classes of {$C^*$}-algebras}, in Proceedings of the
  {S}ixth {A}sian {L}ogic {C}onference ({B}eijing, 1996), World Sci. Publ.,
  River Edge, NJ, 1998, pp.~121--149.

\bibitem{KeLo97}
{\sc A.~S. Kechris and A.~Louveau}, {\em The classification of hypersmooth
  {B}orel equivalence relations}, J. Amer. Math. Soc., 10 (1997), pp.~215--242.

\bibitem{ku}
{\sc O.~Kurka}, {\em Tsirelson-like spaces and complexity of classes of
  {B}anach spaces}, Rev. R. Acad. Cienc. Exactas F\'{\i}s. Nat. Ser. A Mat.
  RACSAM, 112 (2018), pp.~1101--1123.

\bibitem{Lov}
{\sc L.~Lov\'asz}, {\em Large networks and graph limits}, vol.~60 of American
  Mathematical Society Colloquium Publications, American Mathematical Society,
  Providence, RI, 2012.

\bibitem{Melleray}
{\sc J.~Melleray}, {\em Computing the complexity of the relation of isometry
  between separable {B}anach spaces}, MLQ Math. Log. Q., 53 (2007),
  pp.~128--131.

\bibitem{O}
{\sc M.~I. Ostrovskii}, {\em Paths between {B}anach spaces}, Glasg. Math. J.,
  44 (2002), pp.~261--273.

\bibitem{o94}
{\sc M.~I. Ostrovski\u\i}, {\em Topologies on the set of all subspaces of a
  {B}anach space and related questions of {B}anach space geometry}, Quaestiones
  Math., 17 (1994), pp.~259--319.

\bibitem{Pi95}
{\sc G.~Pisier}, {\em Exact operator spaces}, Ast\'erisque,  (1995),
  pp.~159--186.
\newblock Recent advances in operator algebras (Orl\'eans, 1992).

\bibitem{Pis}
\leavevmode\vrule height 2pt depth -1.6pt width 23pt, {\em Introduction to
  operator space theory}, vol.~294 of London Mathematical Society Lecture Note
  Series, Cambridge University Press, Cambridge, 2003.

\bibitem{Sabok}
{\sc M.~Sabok}, {\em Completeness of the isomorphism problem for separable
  {$\rm C^\ast$}-algebras}, Invent. Math., 204 (2016), pp.~833--868.

\bibitem{Sol}
{\sc S.~a. Solecki}, {\em Analytic ideals and their applications}, Ann. Pure
  Appl. Logic, 99 (1999), pp.~51--72.

\bibitem{ver}
{\sc A.~M. Vershik}, {\em The universal {U}ryson space, {G}romov's metric
  triples, and random metrics on the series of natural numbers}, Uspekhi Mat.
  Nauk, 53 (1998), pp.~57--64.

\bibitem{Zie}
{\sc J.~Zielinski}, {\em The complexity of the homeomorphism relation between
  compact metric spaces}, Adv. Math., 291 (2016), pp.~635--645.

\end{thebibliography}

\end{document}